\documentclass[11pt,a4paper]{article}

\usepackage[a4paper,margin=1in]{geometry}
\usepackage[T1]{fontenc}
\usepackage[utf8]{inputenc}
\usepackage{amsmath,amssymb,amsthm,mathtools}
\usepackage{mathrsfs}
\usepackage{graphicx}
\usepackage{enumitem}
\usepackage{placeins}
\usepackage{xcolor}

\usepackage[hidelinks,bookmarks=false]{hyperref}
\usepackage{aliascnt}
\usepackage[nameinlink,noabbrev]{cleveref}
\usepackage[font=small,labelfont=bf]{caption}
\numberwithin{equation}{section}

\newtheorem{theorem}{Theorem}[section]

\newaliascnt{proposition}{theorem}
\newtheorem{proposition}[proposition]{Proposition}
\aliascntresetthe{proposition}

\newaliascnt{lemma}{theorem}
\newtheorem{lemma}[lemma]{Lemma}
\aliascntresetthe{lemma}

\newaliascnt{corollary}{theorem}
\newtheorem{corollary}[corollary]{Corollary}
\aliascntresetthe{corollary}

\theoremstyle{definition}

\newaliascnt{definition}{theorem}
\newtheorem{definition}[definition]{Definition}
\aliascntresetthe{definition}

\newaliascnt{remark}{theorem}
\newtheorem{remark}[remark]{Remark}
\aliascntresetthe{remark}

\newaliascnt{example}{theorem}
\newtheorem{example}[example]{Example}
\aliascntresetthe{example}

\newaliascnt{assumption}{theorem}
\newtheorem{assumption}[assumption]{Assumption}
\aliascntresetthe{assumption}

\crefname{theorem}{theorem}{theorems}
\Crefname{theorem}{Theorem}{Theorems}

\crefname{proposition}{Proposition}{propositions}
\Crefname{proposition}{Proposition}{Propositions}

\crefname{lemma}{Lemma}{lemmas}
\Crefname{lemma}{Lemma}{Lemmas}

\crefname{corollary}{corollary}{corollaries}
\Crefname{corollary}{Corollary}{Corollaries}

\crefname{definition}{Definition}{definitions}
\Crefname{definition}{Definition}{Definitions}

\crefname{remark}{remark}{remarks}
\Crefname{remark}{Remark}{Remarks}

\crefname{example}{example}{examples}
\Crefname{example}{Example}{Examples}

\crefname{assumption}{Assumption}{assumptions}
\Crefname{assumption}{Assumption}{Assumptions}

\newcommand{\R}{\mathbb{R}}
\newcommand{\T}{\mathbb{T}}

\newcommand{\diag}{\mathrm{diag}}
\newcommand{\dd}{\,d}

\title{\textbf{Towards a Gagliardo-Type Theory of Fractional Sobolev Spaces on Arbitrary Time Scales}}

\author{
    Hafida Abbas\textsuperscript{1} \thanks{E-mail: \texttt{hafida.abbas@univ-saida.dz}} \quad
    Abdelhalim Azzouz\textsuperscript{2} \thanks{E-mail: \texttt{abdelhalim.azzouz@univ-saida.dz}} \quad
    Praveen Agarwal \textsuperscript{3,4} \thanks{E-mail: \texttt{goyal.praveen2011@gmail.com}} \quad
    Delfim F.~M.~Torres\textsuperscript{5} \thanks{Corresponding author. E-mail: \texttt{delfim@ua.pt}. ORCID: \texttt{0000-0001-8641-2505}.}  \\[0.2cm]
    \normalsize\textsuperscript{1} Faculty of Economics and Commercial Sciences,\\ University of Saida, Algeria\\
    \normalsize\textsuperscript{2} Faculty of Science,\\ University of Naama, Algeria\\
    \normalsize\textsuperscript{3} Nonlinear Dynamics Research Centre (NDRC), \\Ajman University, Ajman, UAE\\
\normalsize\textsuperscript{4} International Telematic University Uninettuno\\ Corso Vittorio Emanuele II,\\ Roma, 39 00186, Italy\\
\normalsize\textsuperscript{5} Center for Research and Development in Mathematics and Applications (CIDMA),\\ Department of Mathematics, University of Aveiro,\\ 3810-193 Aveiro, Portugal
}
\date{}

\begin{document}

\maketitle

\begin{abstract}
We propose a systematic Gagliardo-type formulation of fractional Sobolev spaces on arbitrary time scales, based on the Lebesgue Delta-measure and the off-diagonal interaction domain induced by the product measure. For fractional orders strictly between zero and one and for finite Lebesgue exponents, we define a nonlocal Gagliardo seminorm and the associated function space. This construction provides a notion of fractional regularity on time scales that is genuinely nonlocal and structurally distinct from the derivative-based approaches developed in the existing literature.
We establish the basic functional properties of these spaces: they are Banach spaces in all admissible cases, reflexive in the strict range of exponents, and Hilbert in the quadratic case. On bounded time scales with finitely many connected components, we identify a sharp criterion for the construction to be nontrivial. We then compare the new framework with the derivative-based Riemann--Liouville fractional Sobolev spaces previously studied on time scales. On a continuous interval, in the supercritical regime, we obtain a norm equivalence with the bilateral Riemann--Liouville space on the subspace of functions with vanishing boundary trace. On hybrid time scales, we prove an explicit obstruction that rules out any analogous equivalence, due to the contribution of the mixed continuous--discrete interactions.
On bounded hybrid time scales with finitely many connected components separated by a positive distance, we further establish a Poincar\'e-type inequality, a fractional Sobolev embedding, and fractional Hardy and Caffarelli--Kohn--Nirenberg-type inequalities for subcritical weights. Together, these results provide a complete functional and geometric framework, together with first geometric estimates, for the nonlocal Gagliardo-type approach to fractional Sobolev spaces on time scales.
\end{abstract}

\noindent\textbf{Keywords:}
time scales, Lebesgue $\Delta$-measure, nonlocal energy, fractional Sobolev spaces, Gagliardo seminorm, geometric inequalities.\\
\noindent\textbf{2020 Mathematics Subject Classification}. Primary 46E35; Secondary 39A12, 26A33, 46B20.


\section{Introduction}
\label{sec:intro-framework}

This paper develops a Gagliardo-type approach to fractional Sobolev spaces on arbitrary time scales. The measure-theoretic framework induced by the Lebesgue $\Delta$-measure makes it possible to treat continuous, discrete, and hybrid structures within a common setting. A basic question in this context is whether fractional spaces can be built directly from nonlocal interaction energies on $\T\times\T$, in the spirit of the classical Gagliardo theory in the Euclidean setting; see, for instance, \cite{AdamsFournier2003,DiNezzaPalatucciValdinoci2012,MolicaBisciRadulescuServadei2016}.

An alternative nonlocal approach to fractional Sobolev spaces,
based on the Haj\l{}asz gradient and Poincar\'e inequalities on
metric measure spaces, was developed in
\cite{HajlaszKoskela2000}. That framework applies in principle
to time scales endowed with the $\Delta$-measure, but it requires
a doubling condition on the underlying measure, which fails on
hybrid time scales with isolated points of positive mass. The
Gagliardo-type construction pursued here bypasses this obstruction
by working directly with the off-diagonal interaction kernel on
$\Omega_\T$, without any doubling assumption.

The analytical infrastructure needed for such a program is already partially available in the time-scale literature. The Lebesgue $\Delta$-measure and its multidimensional extensions were developed in \cite{BohnerGuseinov2006,CabadaVivero2006}; see also the monographs \cite{BohnerPeterson2001,BohnerPeterson2003}. On this basis, first-order Sobolev spaces on time scales were studied in \cite{AgarwalEtAl2006,SuYaoFeng2015}, and subsequent developments include generalized Lebesgue and Sobolev spaces on bounded time scales, including variable-exponent settings \cite{SkrzypekSzymanskaDebowska2019}, as well as Sobolev- and Morrey-type embeddings on time scales \cite{ZhangMengLiu2018}. Thus, the measure-theoretic and first-order functional framework is already well established.

By contrast, the fractional theory on time scales has so far been developed predominantly through operator-based constructions. One direction is based on conformable fractional calculus; see \cite{BenkhettouHassaniTorres2016,WangZhouLi2016}. Another important line of research concerns fractional calculus and fractional differential problems on time scales in Riemann--Liouville-type settings; see, for instance, \cite{BenkhettouBritoTorres2015,BenkhettouHammoudiTorres2016}. More recently, Hu and Li introduced right- and left-sided fractional Sobolev spaces on time scales by means of Riemann--Liouville fractional derivatives, established several structural properties of these spaces, and applied them to fractional boundary value problems; see \cite{HuLi2022Right,HuLi2022Left,HuLi2022Hindawi}. In a related direction, Tan, Zhou, and Wang developed a weighted fractional Sobolev theory on timescales, including a broader functional-analytic and variational framework \cite{TanZhouWang2025}. These contributions are mathematically substantial, but their point of departure is a fractional differential operator, or a weighted variant thereof, rather than a nonlocal interaction energy of Gagliardo type.

Our approach is different. Instead of defining fractional regularity through a fractional derivative, we construct it directly from a nonlocal interaction seminorm. Given a time scale $\T$ endowed with its Lebesgue $\Delta$-measure $\mu_\Delta$, we consider the off-diagonal set
\[
\Omega_{\T}:=\{(t,s)\in\T\times\T:\ t\neq s\},
\]
and, for $\alpha\in(0,1)$ and $1\le p<\infty$, we define
\[
[u]_{W^{\alpha,p}_{\Delta}(\T)}
:=
\left(
\iint_{\Omega_{\T}}
\frac{|u(t)-u(s)|^p}{|t-s|^{1+\alpha p}}
\,d(\mu_\Delta\otimes\mu_\Delta)(t,s)
\right)^{1/p},
\]
whenever the right-hand side is finite. This leads to the fractional Sobolev space
\[
W^{\alpha,p}_{\Delta}(\T)
=
\left\{
u\in L^p_{\Delta}(\T):
[u]_{W^{\alpha,p}_{\Delta}(\T)}<\infty
\right\}.
\]

The use of the off-diagonal set $\Omega_{\T}$ is essential in the time-scale setting. In the purely continuous case, the diagonal is negligible for the product measure, so its exclusion is harmless. On arbitrary time scales, however, and especially on discrete or hybrid ones, the diagonal may carry positive product measure. For this reason, the kernel must be interpreted on $\Omega_{\T}$ rather than on the whole product space. This feature is specific to the measure-theoretic structure of time scales and is one of the reasons why a separate analysis is needed in the present framework.

The novelty of the paper is not to provide a first fractional Sobolev theory on time scales, but to develop a Gagliardo-type model based on nonlocal interactions. This places the construction closer to the classical fractional Sobolev seminorm of Euclidean analysis than to definitions based on fractional derivatives. The point is especially clear on hybrid time scales, where the interaction kernel combines continuous, discrete, and mixed contributions in a single formula. We do not claim an equivalence with the derivative-based fractional Sobolev spaces already studied on time scales; Section~\ref{sec:comparison-RL} discusses this issue at a structural level.


\subsection*{Main contributions}

The aim of the paper is to provide a systematic Gagliardo-type formulation of fractional Sobolev spaces on time scales, together with a first set of geometric estimates in the hybrid setting. The contributions are organized along three axes.

First, on an arbitrary time scale, we define a Gagliardo-type fractional Sobolev seminorm using the Lebesgue $\Delta$-measure and the off-diagonal interaction domain, and we establish the basic functional properties of the corresponding spaces. We prove completeness for the full admissible range of Lebesgue exponents, reflexivity in the strict range, and the Hilbert structure in the quadratic case (under an equivalent norm induced by the natural inner product). We also identify a sharp nontriviality criterion on bounded time scales with finitely many connected components.

Second, we compare the interaction-based framework with the derivative-based fractional Sobolev spaces of Riemann--Liouville type studied on time scales. We prove that no equivalence with a single one-sided Riemann--Liouville derivative norm can hold on the full space. On a continuous interval, in the supercritical regime, we obtain a norm equivalence with the bilateral Riemann--Liouville space on the subspace of functions with vanishing boundary trace (Theorem~\ref{thm:RL-equivalence}). On hybrid time scales, we exhibit an explicit one-parameter family of functions ruling out any analogous equivalence and identifying the mixed continuous--discrete interactions as the structural obstruction (Theorem~\ref{thm:hybrid-obstruction}).

Third, on bounded hybrid time scales with finitely many connected components separated by a positive distance, we prove a Poincar\'e-type inequality, a fractional Sobolev embedding, and fractional Hardy and Caffarelli--Kohn--Nirenberg-type inequalities for subcritical weights. The dependence of the Poincar\'e constant on the gap width between components is also quantified asymptotically. Together, these estimates provide a first geometric theory in the Gagliardo-type setting on time scales.

\subsection*{Scope of the paper}

The paper is restricted to constant fractional order strictly between zero and one and to finite Lebesgue exponents. The comparison with derivative-based frameworks is developed up to the equivalence and obstruction results of Section~\ref{sec:comparison-RL}; an extension to the subcritical regime, as well as compactness, trace theory, and variable-order versions of the construction, are not addressed here.

\subsection*{Organization}

Section~\ref{sec:preliminaries} collects the measure-theoretic preliminaries. Section~\ref{sec:constant-order} introduces the Gagliardo-type seminorm and the associated spaces, together with their main functional properties. Section~\ref{sec:comparison-RL} discusses the relation with derivative-based fractional Sobolev spaces on time scales. Section~\ref{sec:geometry} is devoted to geometric inequalities on bounded hybrid time scales.


\section{Measure-theoretic preliminaries on time scales}\label{sec:preliminaries}

In this section we fix the notation and collect the measure-theoretic material needed for the definition of fractional Gagliardo-type seminorms on a time scale. We refer to \cite{BohnerGuseinov2006,CabadaVivero2006} for the construction of the Lebesgue $\Delta$-measure and to the monographs \cite{BohnerPeterson2001,BohnerPeterson2003} for general background on time scales.

\subsection{Basic notation}

Let $\T$ be a time scale, that is, a nonempty closed subset of $\R$. We denote by $\mu_\Delta$ the Lebesgue $\Delta$-measure associated with $\T$. Throughout the paper, measurable subsets of $\T$ and measurable functions on $\T$ are understood with respect to the $\sigma$-algebra induced by $\mu_\Delta$.

For $1\le p<\infty$, we write
\[
L^p_{\Delta}(\T):=L^p(\T,\mu_\Delta),
\qquad
\|u\|_{L^p_{\Delta}(\T)}
:=
\left(
\int_{\T}|u(t)|^p\,d\mu_\Delta(t)
\right)^{1/p},
\]
and similarly $L^\infty_{\Delta}(\T):=L^\infty(\T,\mu_\Delta)$. Functions are identified modulo equality $\mu_\Delta$-almost everywhere. Since $(\T,\Sigma_\Delta,\mu_\Delta)$ is a measure space, the spaces $L^p_{\Delta}(\T)$ are Banach spaces for $1\le p\le\infty$, reflexive and separable for $1<p<\infty$; see, e.g., \cite[Chapter~6]{Folland1999} and \cite[Chapter~III]{Conway1990}.

\subsection{Product measure and off-diagonal domain}

We denote by $\mu_\Delta\otimes\mu_\Delta$ the product measure on $\T\times\T$. The diagonal and the off-diagonal domain are defined by
\[
\diag(\T):=\{(t,s)\in\T\times\T:\ t=s\},
\qquad
\Omega_{\T}:=(\T\times\T)\setminus\diag(\T).
\]
Since $\diag(\T)=\Phi^{-1}(\{0\})$ where $\Phi(t,s)=t-s$ is continuous, the set $\Omega_{\T}$ is measurable with respect to $\mu_\Delta\otimes\mu_\Delta$.

\begin{remark}\label{rem:diagonal-positive-measure}
The exclusion of the diagonal is not merely technical. On purely continuous time scales, $\diag(\T)$ is negligible for the product measure, but on discrete or mixed time scales it may have positive $(\mu_\Delta\otimes\mu_\Delta)$-measure. Therefore, in the nonlocal framework considered here, all singular kernels must be interpreted on $\Omega_{\T}$ rather than on the whole product space.
\end{remark}

\subsection{Measurability and well-posedness of nonlocal kernels}

The following observations ensure that the Gagliardo-type seminorm introduced in Section~\ref{sec:constant-order} is well defined at the level of $L^p$-equivalence classes.

\begin{lemma}\label{lem:kernel-wellposed}
Let $u:\T\to\R$ be measurable, let $\alpha\in(0,1)$, and let $1\le p<\infty$.
\begin{enumerate}
\item[\textup{(i)}] The function $(t,s)\mapsto |u(t)-u(s)|^p\,|t-s|^{-(1+\alpha p)}$ is measurable on $\Omega_{\T}$.
\item[\textup{(ii)}] If $u=v$ $\mu_\Delta$-a.e.\ on $\T$, then $|u(t)-u(s)|^p = |v(t)-v(s)|^p$ for $(\mu_\Delta\otimes\mu_\Delta)$-a.e.\ $(t,s)\in\Omega_{\T}$.
\end{enumerate}
\end{lemma}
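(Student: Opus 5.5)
For part (i), I will write the integrand as a composition of measurable maps on the product space. The coordinate projections $\pi_1(t,s)=t$ and $\pi_2(t,s)=s$ are measurable from $(\T\times\T,\Sigma_\Delta\otimes\Sigma_\Delta)$ to $(\T,\Sigma_\Delta)$ by the definition of the product $\sigma$-algebra. Hence $u\circ\pi_1$ and $u\circ\pi_2$ are measurable, and so is their difference. Composing with the continuous map $x\mapsto|x|^p$ gives measurability of $|u(t)-u(s)|^p$.

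For the denominator, I will first check that $\Phi(t,s)=t-s$ is measurable. Here $\Sigma_\Delta$ contains the Borel subsets of $\T$: the Lebesgue $\Delta$-measure is constructed via Carathéodory from an outer measure on intervals, so every relatively open set is measurable. Thus each projection, viewed as a map into $\R$, is measurable, and $\Phi$ is a difference of measurable real functions. On $\Omega_\T=\Phi^{-1}(\R\setminus\{0\})$, which is measurable as already noted in the text, the function $|t-s|^{-(1+\alpha p)}=g\circ\Phi$ with $g(x)=|x|^{-(1+\alpha p)}$ continuous on $\R\setminus\{0\}$. It is therefore measurable on $\Omega_\T$ with values in $(0,\infty)$. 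The product of two nonnegative measurable functions is measurable, possibly with values in $[0,\infty]$, which settles (i).

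For part (ii), I let $N=\{t\in\T:u(t)\neq v(t)\}$, so $\mu_\Delta(N)=0$. If $(t,s)\in\Omega_\T$ and $|u(t)-u(s)|^p\neq|v(t)-v(s)|^p$, then $t\in N$ or $s\in N$. So the exceptional set is contained in $(N\times\T)\cup(\T\times N)$.

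The main point is showing that $(\mu_\Delta\otimes\mu_\Delta)(N\times\T)=\mu_\Delta(N)\mu_\Delta(\T)=0$ even when $\mu_\Delta(\T)=\infty$, using the convention $0\cdot\infty=0$ in the product-measure construction. To make this rigorous for $\sigma$-finite $\mu_\Delta$, I will write $\T=\bigcup_k(\T\cap[-k,k])$, where each piece has finite $\Delta$-measure (at most $2k$ plus a bounded graininess contribution). I then apply countable subadditivity to the sets $N\times(\T\cap[-k,k])$, each of which has measure $0$. The same argument handles $\T\times N$. Hence the exceptional set is null, and its intersection with $\Omega_\T$ is null as well.

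The only delicate step is the $\sigma$-finiteness and Borel-measurability bookkeeping, which I would justify by citing the construction in \cite{BohnerGuseinov2006,CabadaVivero2006}.
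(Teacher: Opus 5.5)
Your proposal is correct and follows the same argument as the paper. For (i) you use measurability of $(t,s)\mapsto u(t)$ and $(t,s)\mapsto u(s)$ together with continuity of $|t-s|^{-(1+\alpha p)}$ off the diagonal, and for (ii) you show the exceptional set lies in the null set $(N\times\T)\cup(\T\times N)$. Your extra bookkeeping ($\Sigma_\Delta$ contains the Borel sets, $\mu_\Delta$ is $\sigma$-finite) makes explicit what the paper's two-line proof takes for granted; the $\sigma$-finite exhaustion is optional, since $(\mu_\Delta\otimes\mu_\Delta)(N\times\T)=0\cdot\mu_\Delta(\T)=0$ already holds for rectangles by the product-measure construction.
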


\begin{proof}
\textup{(i)} The maps $(t,s)\mapsto u(t)$ and $(t,s)\mapsto u(s)$ are measurable on $\T\times\T$, hence so is $(t,s)\mapsto |u(t)-u(s)|^p$. Since $|t-s|^{-(1+\alpha p)}$ is continuous on $\Omega_{\T}$, their product is measurable on $\Omega_{\T}$.

\textup{(ii)} Let $N:=\{t\in\T: u(t)\neq v(t)\}$. Then $\mu_\Delta(N)=0$, so $(N\times\T)\cup(\T\times N)$ is $(\mu_\Delta\otimes\mu_\Delta)$-null. Outside this set, $u(t)-u(s)=v(t)-v(s)$.
\end{proof}


\section{Fractional Gagliardo-type spaces of constant order}\label{sec:constant-order}

Throughout this section, let $\alpha\in(0,1)$ and $1\le p<\infty$ be fixed.

\subsection{Definition of the seminorm and of the space}

\begin{definition}\label{def:gagliardo-seminorm}
Let $u\in L^p_{\Delta}(\T)$. We define the fractional Gagliardo-type seminorm of order $\alpha$ and exponent $p$ by
\begin{equation}\label{eq:gagliardo-seminorm}
[u]_{W^{\alpha,p}_{\Delta}(\T)}
:=
\left(
\iint_{\Omega_{\T}}
\frac{|u(t)-u(s)|^p}{|t-s|^{1+\alpha p}}
\,d(\mu_\Delta\otimes\mu_\Delta)(t,s)
\right)^{1/p},
\end{equation}
whenever the right-hand side is finite. By \cref{lem:kernel-wellposed}, this quantity is well defined on equivalence classes in $L^p_{\Delta}(\T)$.
\end{definition}

\begin{definition}\label{def:fractional-space}
We define
\begin{equation}\label{eq:space-def}
W^{\alpha,p}_{\Delta}(\T)
:=
\left\{
u\in L^p_{\Delta}(\T):
[u]_{W^{\alpha,p}_{\Delta}(\T)}<\infty
\right\}.
\end{equation}
On this space we consider the norm
\begin{equation}\label{eq:fractional-norm}
\|u\|_{W^{\alpha,p}_{\Delta}(\T)}
:=
\|u\|_{L^p_{\Delta}(\T)}
+
[u]_{W^{\alpha,p}_{\Delta}(\T)}.
\end{equation}
\end{definition}

\begin{theorem}[Non-triviality criterion on bounded hybrid time scales]\label{thm:nontriviality}
Let $\T\subset\R$ be a bounded time scale with finitely many connected
components, let $\alpha\in(0,1)$, and let $1\le p<\infty$. Then
$W^{\alpha,p}_\Delta(\T) = L^p_\Delta(\T)$ (with equivalent norms)
if and only if every connected component of\/ $\T$ is a singleton.
Equivalently, $W^{\alpha,p}_\Delta(\T)\subsetneq L^p_\Delta(\T)$
if and only if\/ $\T$ contains a nondegenerate interval.
\end{theorem}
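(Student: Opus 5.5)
Since $\T$ is bounded with finitely many connected components, each component is either a single point or a nondegenerate compact interval $[a,b]$ with $a<b$. We prove the two directions separately. The second formulation in the statement is equivalent to the first because every component is a singleton if and only if $\T$ contains no nondegenerate interval.

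\emph{Sufficiency.} Suppose every component is a singleton. Then $\T=\{t_1<\dots<t_N\}$ is finite. Let $\delta:=\min_{i\neq j}|t_i-t_j|>0$, and write $m_i:=\mu_\Delta(\{t_i\})=\sigma(t_i)-t_i$. Then $m_i>0$ for $i<N$, while $m_N=0$ for the maximum point, by the usual convention.

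The seminorm reduces to a finite sum over pairs with $i\neq j$:
\[
[u]^p_{W^{\alpha,p}_\Delta(\T)}=\sum_{i\neq j}\frac{|u(t_i)-u(t_j)|^p}{|t_i-t_j|^{1+\alpha p}}\,m_im_j .
\]
Each term is bounded using $|x-y|^p\le 2^{p-1}(|x|^p+|y|^p)$ and $|t_i-t_j|\ge\delta$. This gives
\[
[u]^p\le 2^{p}\,\delta^{-1-\alpha p}\,\mu_\Delta(\T)\,\|u\|^p_{L^p_\Delta(\T)}.
\]
Note that $\mu_\Delta(\T)<\infty$ because $\T$ is bounded. Hence $W^{\alpha,p}_\Delta(\T)=L^p_\Delta(\T)$, and
\[
\|u\|_{L^p}\le\|u\|_{W^{\alpha,p}}\le\bigl(1+C\bigr)\|u\|_{L^p}
\]
for some constant $C$ depending only on $\T$, $\alpha$ and $p$. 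The same argument in fact works whenever the components are singletons, since finiteness of their number gives the uniform gap $\delta$.

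\emph{Necessity.} Suppose $\T$ contains a nondegenerate interval $I=[a,b]$. On $I$ the measure $\mu_\Delta$ coincides with Lebesgue measure, so $I\times I$ carries Lebesgue product measure. We will exhibit a function $u\in L^p_\Delta(\T)$ supported in $I$ with $[u]=\infty$.

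The plan is to use a jump function $u=\mathbf 1_{[c,b]}$ with $c=(a+b)/2$, extended by zero off $I$. It lies in $L^p_\Delta$. Restricting the double integral to $I\times I$, the seminorm is bounded below by a constant multiple of
\[
\int_a^c\!\!\int_c^b |t-s|^{-1-\alpha p}\,ds\,dt .
\]
This integral diverges precisely when $\alpha p\ge 1$.

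\textbf{Main obstacle: the case $\alpha p<1$.} There indicator functions do belong to $W^{\alpha,p}$, so we need a more singular example. We take instead $u(t)=|t-c|^{-\beta}$ on $I$, with $\beta$ chosen so that
\[
\beta p<1\le(\beta+\alpha)p .
\]
This ensures $u\in L^p$. To check that the seminorm is infinite, use the scaling of $u$ near $c$: integrate over the region $\{|t-c|\in(r,2r),\ |s-c|<r/4\}$, where $|u(t)-u(s)|\gtrsim r^{-\beta}$ and $|t-s|\sim r$. This produces dyadic contributions of order
\[
r^{-\beta p}\, r^{-1-\alpha p}\, r^{2}=r^{1-(\beta+\alpha)p},
\]
whose sum over $r=2^{-k}$ diverges.

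A cleaner alternative is to choose $\beta$ so that $u\in L^p$ while $u\notin L^q$ for the fractional Sobolev exponent $q$ from the classical embedding $W^{\alpha,p}(I)\hookrightarrow L^{p/(1-\alpha p)}(I)$. This forces $[u]=\infty$ without any direct computation.

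In either case $u\in L^p_\Delta(\T)\setminus W^{\alpha,p}_\Delta(\T)$, so the inclusion $W^{\alpha,p}_\Delta(\T)\subseteq L^p_\Delta(\T)$ is strict, and in particular no norm equivalence can hold.
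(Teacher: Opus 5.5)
Your proof is correct, and the sufficiency direction matches the paper's argument: a finite sum, the uniform gap $\delta$, and $|x-y|^p\le 2^{p-1}(|x|^p+|y|^p)$.

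For necessity you take a different route. The paper uses a single test function for all regimes, $u(t)=|t-a|^{\gamma}\mathbf{1}_{[a,b]}(t)$ with $\gamma=\alpha-\frac1p-\frac{\varepsilon}{p}$, and bounds the increment from below via the mean value theorem. As written, that step is reversed. For $\gamma<1$ one only gets $|h^\gamma-k^\gamma|\ge|\gamma|\,|h-k|\max(h,k)^{\gamma-1}$, so one has to restrict to, say, $h-k\le k$ to recover divergence. Moreover, when $\alpha p>1$ the admissible choice $\varepsilon=\alpha p-1$ gives $\gamma=0$, i.e.\ $u$ constant on $[a,b]$, and then the construction fails.

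Your split into an exact jump computation for $\alpha p\ge 1$ and the interior singularity $|t-c|^{-\beta}$, $\beta\in[\frac1p-\alpha,\frac1p)$, for $\alpha p<1$ avoids both problems. Your condition forces $\beta>0$, which is exactly what makes $(4^\beta-1)r^{-\beta}$ a valid lower bound on the oscillation in your dyadic regions. These regions are disjoint in $t$, so the contributions $\gtrsim r^{1-(\alpha+\beta)p}\gtrsim 1$ add up to $+\infty$.

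The split is in fact unnecessary: any $\beta\in(0,\frac1p)$ with $(\alpha+\beta)p\ge 1$ works in every regime by the same dyadic count. Your embedding-based alternative is also valid, but it imports the classical Sobolev embedding on intervals, whereas the dyadic argument is self-contained.

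One small precision applies to both proofs. If $b<\max\T$, then $b$ is right-scattered, so $\mu_\Delta$ agrees with Lebesgue measure only on $[a,b)$ and has an atom of mass $\sigma(b)-b$ at $b$. This is harmless here, because your lower bounds only need $\mu_\Delta$ to dominate Lebesgue measure on $I$, and your $u(b)$ is finite.
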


\begin{remark}[On singleton masses and $L^p_\Delta$ equivalence classes]\label{rem:nontriv-masses}
On a finite bounded time scale $\T=\{t_1<\cdots<t_m\}$, the Lebesgue
$\Delta$-measure assigns mass $\mu_\Delta(\{t_i\}) = \sigma(t_i)-t_i =
t_{i+1}-t_i > 0$ to each $t_i$ with $i<m$, whereas the maximal point
satisfies $\sigma(t_m) = t_m$ and therefore $\mu_\Delta(\{t_m\}) = 0$;
see~\cite{BohnerGuseinov2006,CabadaVivero2006}. Consequently, the
maximal point is a $\mu_\Delta$-null set and is invisible to both
$L^p_\Delta(\T)$ and the Gagliardo seminorm
$[\,\cdot\,]_{W^{\alpha,p}_\Delta(\T)}$. The statement of
Theorem~\ref{thm:nontriviality} should therefore be understood modulo
the standard $L^p_\Delta$ identification: two functions agreeing on
$\T\setminus\{t_m\}$ define the same element of $L^p_\Delta(\T)$, and
the equivalence of norms holds on these equivalence classes. The
argument below is unaffected, since the term involving
$\mu_\Delta(\{t_m\})$ vanishes identically on both sides. In the
geometric part of the paper (Assumption~\ref{ass:hybrid-class} below),
the additional hypothesis $\mu_\Delta(\{d_j\})>0$ is imposed on every
singleton component to avoid this degeneracy.
\end{remark}

\begin{proof}
$(\Rightarrow)$ Suppose every connected component of $\T$ is a singleton.
Since $\T$ is bounded and has only finitely many connected components, it follows that
\[
\T=\{t_1,\dots,t_m\}
\]
for some finite family of points. Set
\[
\delta:=\min_{i\neq j}|t_i-t_j|>0.
\]
Then for every $(t_i,t_j)\in\Omega_\T$ we have $|t_i-t_j|\ge\delta$, so
\[
[u]^p_{W^{\alpha,p}_\Delta(\T)}
= \sum_{i\neq j}
\frac{|u(t_i)-u(t_j)|^p}{|t_i-t_j|^{1+\alpha p}}\,
\mu_\Delta(\{t_i\})\,\mu_\Delta(\{t_j\})
\le \delta^{-(1+\alpha p)}
\sum_{i\neq j}|u(t_i)-u(t_j)|^p\,
\mu_\Delta(\{t_i\})\,\mu_\Delta(\{t_j\}).
\]
Using
\[
|u(t_i)-u(t_j)|^p\le 2^{p-1}\bigl(|u(t_i)|^p+|u(t_j)|^p\bigr),
\]
we obtain
\[
[u]^p_{W^{\alpha,p}_\Delta(\T)}
\le C\,\|u\|^p_{L^p_\Delta(\T)},
\]
where $C$ depends only on $\delta$, $\alpha$, $p$, and $\mu_\Delta(\T)$.
Hence $W^{\alpha,p}_\Delta(\T)=L^p_\Delta(\T)$ with equivalent norms.

$(\Leftarrow)$ Suppose $\T$ contains a nondegenerate interval $[a,b]$.
The restriction of $\mu_\Delta$ to $[a,b]$ coincides with the
Lebesgue measure \cite[Theorem~5.2]{CabadaVivero2006}.
Let $\varepsilon\in(0,\alpha p)$ and set
\[
\gamma := \alpha - \frac{1}{p} - \frac{\varepsilon}{p}
= \frac{\alpha p - 1 - \varepsilon}{p}.
\]
Since $\varepsilon < \alpha p$, we have $\gamma p = \alpha p - 1 - \varepsilon > -1$,
so the function
\[
u(t) := |t-a|^{\gamma}\,\mathbf{1}_{[a,b]}(t)
\]
belongs to $L^p([a,b])$. We show directly that
$[u]^p_{W^{\alpha,p}([a,b])}=+\infty$.

Writing $t=a+h$, $s=a+k$ with $h,k\in[0,\eta]$ for small $\eta>0$,
the contribution to the Gagliardo seminorm near the diagonal is
\begin{equation}\label{eq:powerfunction-integral}
I_\eta
:= \iint_{[0,\eta]^2}
\frac{|h^\gamma - k^\gamma|^p}{|h-k|^{1+\alpha p}}\,dh\,dk.
\end{equation}
By the mean-value theorem, $|h^\gamma - k^\gamma|
\ge |\gamma|\,|h-k|\,\min(h,k)^{\gamma-1}$ when $\gamma<1$,
so for $0<k<h<\eta$,
\[
\frac{|h^\gamma-k^\gamma|^p}{|h-k|^{1+\alpha p}}
\ge |\gamma|^p\,\frac{k^{(\gamma-1)p}}{|h-k|^{1+\alpha p-p}}.
\]
Setting $h=k+r$ with $r\in(0,\eta)$ and integrating first in $r$,
\[
I_\eta
\ge |\gamma|^p
\int_0^\eta k^{(\gamma-1)p}\dd k
\int_0^{\eta-k} r^{p-1-\alpha p}\dd r.
\]
The inner integral in $r$ is finite for all $k>0$ since
$p-1-\alpha p = p(1-\alpha)-1 > -1$ (as $\alpha<1$ and $p\ge 1$).
The outer integral in $k$ equals $\int_0^\eta k^{(\gamma-1)p}\dd k$,
which diverges if and only if $(\gamma-1)p\le -1$, i.e.\
$\gamma\le\alpha-1/p$. Since $\gamma=\alpha-1/p-\varepsilon/p<\alpha-1/p$,
the exponent $(\gamma-1)p = \gamma p - p = (\alpha p-1-\varepsilon)-p
< -1$, so $I_\eta=+\infty$.
Hence $[u]^p_{W^{\alpha,p}([a,b])}=+\infty$.
Extending $u$ by zero on $\T\setminus[a,b]$ preserves
$u\in L^p_\Delta(\T)$. Moreover,
\[
[u]^p_{W^{\alpha,p}_\Delta(\T)}
= \iint_{\Omega_\T}\frac{|u(t)-u(s)|^p}{|t-s|^{1+\alpha p}}
  \,d(\mu_\Delta\otimes\mu_\Delta)
\ge \iint_{[a,b]\times[a,b]}
  \frac{|u(t)-u(s)|^p}{|t-s|^{1+\alpha p}}\,dt\,ds
= [u]^p_{W^{\alpha,p}([a,b])},
\]
where the inequality holds because the integrand is non-negative and
the domain $[a,b]\times[a,b]$ is contained in $\Omega_\T$
(the restriction of $\mu_\Delta$ to $[a,b]$ being the Lebesgue measure
by \cite[Theorem~5.2]{CabadaVivero2006}).
Since $[u]^p_{W^{\alpha,p}([a,b])}=+\infty$, we conclude
$[u]_{W^{\alpha,p}_\Delta(\T)}=\infty$,
hence $W^{\alpha,p}_\Delta(\T)\subsetneq L^p_\Delta(\T)$.
\end{proof}
\subsection{Elementary properties}

\begin{proposition}\label{prop:seminorm-and-norm}
The mapping $u\mapsto [u]_{W^{\alpha,p}_{\Delta}(\T)}$ is a seminorm on $W^{\alpha,p}_{\Delta}(\T)$, and $\|\cdot\|_{W^{\alpha,p}_{\Delta}(\T)}$ is a norm. Every $\mu_\Delta$-a.e.\ constant function has zero seminorm; in particular, $[\cdot]_{W^{\alpha,p}_{\Delta}(\T)}$ is not a norm.
\end{proposition}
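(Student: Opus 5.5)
The plan is to treat the seminorm as an $L^p$ norm of a difference quotient on the product space, so that its properties follow from those of $L^p$. Let $\nu$ be the measure on $\Omega_\T$ given by the restriction of $\mu_\Delta\otimes\mu_\Delta$. For measurable $u$, set
\[
D_u(t,s):=\frac{u(t)-u(s)}{|t-s|^{\frac1p+\alpha}}, \qquad (t,s)\in\Omega_\T .
\]
Then $[u]_{W^{\alpha,p}_\Delta(\T)}=\|D_u\|_{L^p(\Omega_\T,\nu)}$. By \cref{lem:kernel-wellposed}, $D_u$ is measurable, and its class in $L^p(\nu)$ depends only on the class of $u$ in $L^p_\Delta(\T)$.

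First, the seminorm properties. The map $u\mapsto D_u$ is linear: $D_{\lambda u+v}=\lambda D_u+D_v$ pointwise on $\Omega_\T$. Absolute homogeneity $[\lambda u]=|\lambda|\,[u]$ is then immediate. The triangle inequality $[u+v]\le[u]+[v]$ is Minkowski's inequality in $L^p(\Omega_\T,\nu)$, which is valid for every $1\le p<\infty$. It follows that $W^{\alpha,p}_\Delta(\T)$ is a vector subspace of $L^p_\Delta(\T)$, so the seminorm is defined on the whole space. Nonnegativity is obvious.

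Second, the norm. $\|\cdot\|_{W^{\alpha,p}_\Delta(\T)}$ is a sum of the norm $\|\cdot\|_{L^p_\Delta(\T)}$ and a seminorm, so it is again a seminorm. Definiteness comes from the $L^p$ part alone: if $\|u\|_{W^{\alpha,p}_\Delta(\T)}=0$, then $\|u\|_{L^p_\Delta}=0$, so $u=0$ $\mu_\Delta$-a.e., which is the zero element of $L^p_\Delta(\T)$.

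Third, constants. Suppose $u=c$ $\mu_\Delta$-a.e. By \cref{lem:kernel-wellposed}(ii), $|u(t)-u(s)|^p=|c-c|^p=0$ for $\nu$-a.e. $(t,s)\in\Omega_\T$, so $[u]=0$.

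To conclude that the seminorm is not a norm, I need a nonzero a.e.-constant element of $W^{\alpha,p}_\Delta(\T)$. This is the only point needing care. On an unbounded $\T$ with $\mu_\Delta(\T)=\infty$, a nonzero constant is not in $L^p_\Delta$, so I would instead take $u=\mathbf 1_{\T}$ only when $\mu_\Delta(\T)<\infty$. In general I would argue as follows, whenever $\mu_\Delta(\T)>0$.
\begin{itemize}
\item If $\T$ has a point $t_0$ with $\mu_\Delta(\{t_0\})>0$, take $u=\mathbf 1_{\{t_0\}}$. Then $[u]^p\le 2\,\mu_\Delta(\{t_0\})\int_{\T\setminus\{t_0\}}|t_0-s|^{-1-\alpha p}\,d\mu_\Delta(s)$. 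This is finite because $t_0$ is right-scattered, so a neighbourhood on its right is empty, and the kernel is integrable at infinity.
\item Otherwise I would fall back on the bounded case, or simply restrict the claim to $\mu_\Delta(\T)\in(0,\infty)$, where $u\equiv1$ is a nonzero element with zero seminorm.
\end{itemize}

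I expect this nonzero-witness step to be the main obstacle. The first three steps are routine once the Minkowski reformulation is in place.
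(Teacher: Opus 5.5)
Your first three steps are correct and match the paper's proof, which writes the seminorm as the $L^p(\Omega_\T)$ norm of the difference quotient $F_u$, applies Minkowski, and takes definiteness of $\|\cdot\|_{W^{\alpha,p}_\Delta(\T)}$ from the $L^p_\Delta$ part. The paper does not argue the final ``not a norm'' clause at all.

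The gap is in your treatment of that clause. Your first bullet does not give what you need. A witness of non-definiteness must be a nonzero $u$ with $[u]_{W^{\alpha,p}_\Delta(\T)}=0$. For $u=\mathbf 1_{\{t_0\}}$ one has the \emph{equality} $[u]^p=2\mu_\Delta(\{t_0\})\int_{\T\setminus\{t_0\}}|t_0-s|^{-1-\alpha p}\,d\mu_\Delta(s)$, which is strictly positive as soon as $\mu_\Delta(\T\setminus\{t_0\})>0$. The finiteness argument also fails. Right-scatteredness gives no gap on the left: on $\T=[0,1]\cup[2,3]$ the point $t_0=1$ has $\mu_\Delta(\{1\})=1$, but $\int_0^1(1-s)^{-1-\alpha p}\,ds=\infty$. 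And on unbounded time scales with rapidly growing gaps (say $\T=\{s_k\}$ with $s_{k+1}=s_k^3$), the kernel is not $\mu_\Delta$-integrable at infinity either.

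More importantly, no witness can exist when $\mu_\Delta(\T)=\infty$, so the restriction you list as a fallback is forced. Suppose $[u]_{W^{\alpha,p}_\Delta(\T)}=0$. Then $u(t)=u(s)$ for $(\mu_\Delta\otimes\mu_\Delta)$-a.e.\ $(t,s)\in\T\times\T$; on the diagonal this holds trivially. By Tonelli ($\mu_\Delta$ is $\sigma$-finite), $u$ is $\mu_\Delta$-a.e.\ equal to a constant $c$. If $\mu_\Delta(\T)=\infty$, membership in $L^p_\Delta(\T)$ forces $c=0$.

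Hence for $\T=\R$, $\T=\mathbb{Z}$, or any unbounded $\T$, the Gagliardo seminorm \emph{is} a norm on $W^{\alpha,p}_\Delta(\T)$, just not one equivalent to $\|\cdot\|_{W^{\alpha,p}_\Delta(\T)}$. If $\mu_\Delta(\T)=0$, the space is $\{0\}$. So the last sentence of the proposition is true exactly when $0<\mu_\Delta(\T)<\infty$, that is, when $\T$ is bounded with at least two points, and then $u\equiv 1$ is the witness. Drop the atom bullet and state this hypothesis explicitly; the paper's statement needs it as well.
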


\begin{proof}
Nonnegativity and absolute homogeneity are immediate. For the triangle inequality, define
\[
F_u(t,s):=\frac{u(t)-u(s)}{|t-s|^{\frac1p+\alpha}},
\qquad (t,s)\in\Omega_{\T}.
\]
Then $F_{u+v}=F_u+F_v$, and Minkowski's inequality in $L^p(\Omega_{\T},\mu_\Delta\otimes\mu_\Delta)$ gives
\[
[u+v]_{W^{\alpha,p}_{\Delta}(\T)}
=
\|F_{u+v}\|_{L^p(\Omega_{\T})}
\le
\|F_u\|_{L^p(\Omega_{\T})}
+
\|F_v\|_{L^p(\Omega_{\T})}
=
[u]_{W^{\alpha,p}_{\Delta}(\T)}
+
[v]_{W^{\alpha,p}_{\Delta}(\T)}.
\]
Hence $[\cdot]_{W^{\alpha,p}_{\Delta}(\T)}$ is a seminorm. The triangle inequality and absolute homogeneity for $\|\cdot\|_{W^{\alpha,p}_{\Delta}(\T)}$ follow from those of $\|\cdot\|_{L^p_{\Delta}(\T)}$ and $[\cdot]_{W^{\alpha,p}_{\Delta}(\T)}$. If $\|u\|_{W^{\alpha,p}_{\Delta}(\T)}=0$, then $\|u\|_{L^p_{\Delta}(\T)}=0$, so $u=0$ $\mu_\Delta$-a.e.
\end{proof}

\subsection{Isometric embedding and completeness}

The key structural observation is that $W^{\alpha,p}_{\Delta}(\T)$ can be identified with a closed subspace of a product of $L^p$-spaces via a graph embedding.

\begin{definition}\label{def:kernel-map}
For $u\in W^{\alpha,p}_{\Delta}(\T)$, define the nonlocal increment map
\begin{equation}\label{eq:kernel-map}
\mathcal D_{\alpha,p}u(t,s)
:=
\frac{u(t)-u(s)}{|t-s|^{\frac1p+\alpha}},
\qquad
(t,s)\in\Omega_{\T}.
\end{equation}
By construction, $[u]_{W^{\alpha,p}_{\Delta}(\T)} = \|\mathcal D_{\alpha,p}u\|_{L^p(\Omega_{\T},\,\mu_\Delta\otimes\mu_\Delta)}$.
\end{definition}

\begin{proposition}\label{prop:embedding-graph}
The map
\[
J:W^{\alpha,p}_{\Delta}(\T)\to L^p_{\Delta}(\T)\times L^p(\Omega_{\T},\mu_\Delta\otimes\mu_\Delta),
\qquad
J(u):=(u,\mathcal D_{\alpha,p}u),
\]
is a linear isometry when the product space is equipped with the norm $\|(f,g)\|:=\|f\|_{L^p_{\Delta}(\T)}+\|g\|_{L^p(\Omega_{\T})}$.
\end{proposition}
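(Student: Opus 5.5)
The proof will verify three things directly from the definitions: that $J$ is well defined, that it is linear, and that it preserves norms.

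\emph{Well-definedness.} Let $u\in W^{\alpha,p}_{\Delta}(\T)$. By \cref{def:fractional-space}, $u\in L^p_{\Delta}(\T)$. By \cref{lem:kernel-wellposed}(i), the function $\mathcal D_{\alpha,p}u$ is measurable on $\Omega_{\T}$. By \cref{def:kernel-map}, its $L^p(\Omega_{\T})$-norm equals $[u]_{W^{\alpha,p}_{\Delta}(\T)}$, which is finite. Hence $J(u)$ lies in the product space.

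We also need $J$ to be defined on equivalence classes. If $u=v$ $\mu_\Delta$-a.e., then \cref{lem:kernel-wellposed}(ii) gives $u(t)-u(s)=v(t)-v(s)$ for $(\mu_\Delta\otimes\mu_\Delta)$-a.e.\ $(t,s)\in\Omega_{\T}$. Therefore $\mathcal D_{\alpha,p}u=\mathcal D_{\alpha,p}v$ in $L^p(\Omega_{\T})$.

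\emph{Linearity.} This is pointwise: for scalars $\lambda,\beta$,
\[
\mathcal D_{\alpha,p}(\lambda u+\beta v)=\lambda\,\mathcal D_{\alpha,p}u+\beta\,\mathcal D_{\alpha,p}v \quad\text{on } \Omega_{\T}.
\]
Combined with linearity of the first component, $J$ is linear. This is the identity $F_{u+v}=F_u+F_v$ already used in \cref{prop:seminorm-and-norm}.

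\emph{Isometry.} For $u\in W^{\alpha,p}_{\Delta}(\T)$,
\[
\|J(u)\|=\|u\|_{L^p_{\Delta}(\T)}+\|\mathcal D_{\alpha,p}u\|_{L^p(\Omega_{\T})}=\|u\|_{L^p_{\Delta}(\T)}+[u]_{W^{\alpha,p}_{\Delta}(\T)}=\|u\|_{W^{\alpha,p}_{\Delta}(\T)},
\]
using \cref{def:kernel-map} and \eqref{eq:fractional-norm}. In particular $J$ is injective, since $\|u\|_{W^{\alpha,p}_{\Delta}(\T)}$ is a norm by \cref{prop:seminorm-and-norm}.

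There is no real obstacle here. The only delicate point is the equivalence-class bookkeeping on $\Omega_{\T}$, and \cref{lem:kernel-wellposed} already handles it, including the case where the diagonal carries positive product measure.
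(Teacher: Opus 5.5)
Your proof is correct and follows the paper's argument; the paper simply says the claim is immediate from the definitions, and you have written out the well-definedness, linearity and norm identity explicitly. One small nit: \cref{lem:kernel-wellposed}(ii) as stated only gives a.e.\ equality of $|u(t)-u(s)|^p$, so for the signed identity $\mathcal D_{\alpha,p}u=\mathcal D_{\alpha,p}v$ you are really using its proof, namely that $u$ and $v$ agree off the null set $(N\times\T)\cup(\T\times N)$, and that is fine.
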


\begin{proof}
Immediate from the definitions.
\end{proof}

\begin{theorem}\label{thm:banach}
For every $\alpha\in(0,1)$ and every $1\le p<\infty$, the space $W^{\alpha,p}_{\Delta}(\T)$ is a Banach space.
\end{theorem}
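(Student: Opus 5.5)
The plan is to use the isometric embedding of \cref{prop:embedding-graph}. The target space $X:=L^p_{\Delta}(\T)\times L^p(\Omega_{\T},\mu_\Delta\otimes\mu_\Delta)$, with the sum norm, is a Banach space, since each factor is a Lebesgue space over a measure space. An isometric image is complete exactly when it is closed. So it suffices to show that $J(W^{\alpha,p}_{\Delta}(\T))$ is closed in $X$; completeness of $W^{\alpha,p}_{\Delta}(\T)$ then transfers back through the isometry $J$.

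\textbf{Closedness of the image.} Let $u_n\in W^{\alpha,p}_{\Delta}(\T)$ with $(u_n,\mathcal D_{\alpha,p}u_n)\to(u,g)$ in $X$. We must show $u\in W^{\alpha,p}_{\Delta}(\T)$ and $g=\mathcal D_{\alpha,p}u$ $(\mu_\Delta\otimes\mu_\Delta)$-a.e.\ on $\Omega_{\T}$.

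\begin{enumerate}
\item Pass to a subsequence with $u_n\to u$ $\mu_\Delta$-a.e.\ on $\T$; let $N$ be the exceptional null set.
\item Off $(N\times\T)\cup(\T\times N)$, which is $(\mu_\Delta\otimes\mu_\Delta)$-null as in \cref{lem:kernel-wellposed}(ii), we have $\mathcal D_{\alpha,p}u_n(t,s)\to\mathcal D_{\alpha,p}u(t,s)$ pointwise. Here the kernel $|t-s|^{-1/p-\alpha}$ is finite on $\Omega_{\T}$.
\item Pass to a further subsequence so that $\mathcal D_{\alpha,p}u_n\to g$ a.e.\ on $\Omega_{\T}$.
\item Uniqueness of a.e.\ limits gives $g=\mathcal D_{\alpha,p}u$ a.e. Hence $[u]_{W^{\alpha,p}_{\Delta}(\T)}=\|g\|_{L^p(\Omega_{\T})}<\infty$, so $u\in W^{\alpha,p}_{\Delta}(\T)$ and $J(u)=(u,g)$.
\end{enumerate}

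As an alternative without the closed-range formalism, one can argue directly with a Cauchy sequence. Its $L^p_\Delta$ limit $u$ exists, and Fatou's lemma applied to $|\mathcal D_{\alpha,p}(u_n-u_m)|^p$ along an a.e.-convergent subsequence gives
\[
[u_n-u]_{W^{\alpha,p}_{\Delta}(\T)}\le\liminf_{m\to\infty}[u_n-u_m]_{W^{\alpha,p}_{\Delta}(\T)}.
\]
The right-hand side is small for large $n$. This yields both $u\in W^{\alpha,p}_{\Delta}(\T)$ and convergence of the full sequence in norm.

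\textbf{Main delicate point.} The hardest step is the measure-theoretic one: passing from a.e.\ convergence on $\T$ to a.e.\ convergence on $\Omega_{\T}$ with respect to the product measure. This is exactly where atoms of $\mu_\Delta$ and the possibly non-null diagonal (\cref{rem:diagonal-positive-measure}) could cause trouble. Restricting to $\Omega_{\T}$ keeps the kernel finite everywhere, and the null-set argument of \cref{lem:kernel-wellposed}(ii) handles the exceptional sets. No doubling or continuity structure of $\T$ is needed, so the result holds for all $\alpha\in(0,1)$ and $1\le p<\infty$.
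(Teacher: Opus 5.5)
Your proposal is correct and is essentially the paper's argument. The paper works directly with a Cauchy sequence: it takes the $L^p$ limits $u$ of $u_n$ and $F$ of $\mathcal D_{\alpha,p}u_n$, then identifies $F=\mathcal D_{\alpha,p}u$ using the same two successive a.e.-convergent subsequences and the null set $(N\times\T)\cup(\T\times N)$. That is exactly your closedness step for $J(W^{\alpha,p}_\Delta(\T))$, and the paper itself uses this closed-range formulation in \cref{thm:reflexive}; your Fatou-based alternative is also valid.
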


\begin{proof}
Let $(u_n)$ be a Cauchy sequence in $W^{\alpha,p}_{\Delta}(\T)$.
Then $(u_n)$ is Cauchy in $L^p_{\Delta}(\T)$, hence converges to
some $u\in L^p_{\Delta}(\T)$. Similarly, $F_n:=\mathcal D_{\alpha,p}u_n$
is Cauchy in $L^p(\Omega_{\T})$ and converges to some
$F\in L^p(\Omega_{\T})$.

\smallskip
\noindent\textit{Identification of $F$.}
Since $u_n\to u$ in $L^p_\Delta(\T)$, there exists a subsequence
(still denoted $(u_n)$) such that $u_n(t)\to u(t)$ for
$\mu_\Delta$-a.e.\ $t\in\T$. Let
$N\subset\T$ be the $\mu_\Delta$-null set of non-convergence,
so $\mu_\Delta(N)=0$. By Fubini's theorem, the product set
$(N\times\T)\cup(\T\times N)$ is $(\mu_\Delta\otimes\mu_\Delta)$-null.
Therefore, for $(\mu_\Delta\otimes\mu_\Delta)$-a.e.\ $(t,s)\in\Omega_\T$
we have $u_n(t)\to u(t)$ and $u_n(s)\to u(s)$, hence
\[
F_n(t,s)
= \frac{u_n(t)-u_n(s)}{|t-s|^{\frac{1}{p}+\alpha}}
\;\longrightarrow\;
\frac{u(t)-u(s)}{|t-s|^{\frac{1}{p}+\alpha}}
\qquad
(\mu_\Delta\otimes\mu_\Delta)\text{-a.e.\ on }\Omega_{\T}.
\]
Since $F_n\to F$ in $L^p(\Omega_{\T})$, a further subsequence of
$(F_n)$ converges to $F$ pointwise $(\mu_\Delta\otimes\mu_\Delta)$-a.e.
By uniqueness of the a.e.\ limit,
\[
F(t,s)=\frac{u(t)-u(s)}{|t-s|^{\frac{1}{p}+\alpha}}
\qquad
(\mu_\Delta\otimes\mu_\Delta)\text{-a.e.\ on }\Omega_{\T},
\]
so $u\in W^{\alpha,p}_{\Delta}(\T)$ and $F=\mathcal D_{\alpha,p}u$.
The convergence $\|u_n-u\|_{W^{\alpha,p}_{\Delta}(\T)}\to 0$ now
follows from $u_n\to u$ in $L^p_{\Delta}(\T)$ and
$\mathcal D_{\alpha,p}u_n\to\mathcal D_{\alpha,p}u$ in $L^p(\Omega_{\T})$.
\end{proof}

\begin{theorem}\label{thm:reflexive}
Let $1<p<\infty$. Then $W^{\alpha,p}_{\Delta}(\T)$ is reflexive.
\end{theorem}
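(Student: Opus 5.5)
The plan is to use the graph embedding of \cref{prop:embedding-graph} together with the standard fact that closed subspaces of reflexive Banach spaces are reflexive. Set
\[
X:=L^p_{\Delta}(\T)\times L^p(\Omega_{\T},\mu_\Delta\otimes\mu_\Delta),
\qquad
\|(f,g)\|_X:=\|f\|_{L^p_{\Delta}(\T)}+\|g\|_{L^p(\Omega_{\T})}.
\]
For $1<p<\infty$, both factors are $L^p$ spaces over measure spaces, so they are reflexive; for the product factor this is again the general $L^p(\Omega_\T,\mu_\Delta\otimes\mu_\Delta)$ statement quoted in \cref{sec:preliminaries}. A finite product of reflexive spaces, with any of the equivalent product norms, is reflexive, since its dual is the product of the duals and the canonical embedding splits componentwise. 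Hence $X$ is reflexive. The norm used above is the $\ell^1$-sum norm, but reflexivity is invariant under equivalent renorming, so this choice is harmless.

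Next, I will show that $J(W^{\alpha,p}_{\Delta}(\T))$ is closed in $X$. By \cref{prop:embedding-graph}, $J$ is a linear isometry. By \cref{thm:banach}, its domain is complete. An isometric image of a complete space is complete, hence closed in $X$. Alternatively, I can argue directly: if $J(u_n)\to(f,g)$, the identification argument in the proof of \cref{thm:banach} (a.e.\ convergent subsequences plus the null set $(N\times\T)\cup(\T\times N)$) gives $g=\mathcal D_{\alpha,p}f$, and therefore $(f,g)=J(f)$.

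Finally, I invoke the theorem that a closed subspace of a reflexive Banach space is reflexive (a consequence of the Hahn--Banach theorem, or of Kakutani's characterization via weak compactness of the closed unit ball). This shows that $J(W^{\alpha,p}_{\Delta}(\T))$ is reflexive. Reflexivity is preserved under isometric isomorphism, so $W^{\alpha,p}_{\Delta}(\T)$ is reflexive.

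No step here is a real obstacle. The only point requiring care is the measure-theoretic justification that $L^p(\Omega_\T,\mu_\Delta\otimes\mu_\Delta)$ is reflexive. This holds for any measure space when $1<p<\infty$, by uniform convexity (Clarkson's inequalities) combined with the Milman--Pettis theorem, so no $\sigma$-finiteness issue arises. If needed, one can note that $\mu_\Delta$ is $\sigma$-finite anyway, since $\T\subset\R$ and $\mu_\Delta$ is finite on bounded sets.
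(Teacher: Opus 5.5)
Your proposal is correct and follows the same route as the paper: embed $W^{\alpha,p}_\Delta(\T)$ isometrically via $J$ into the reflexive product $L^p_\Delta(\T)\times L^p(\Omega_\T,\mu_\Delta\otimes\mu_\Delta)$, show the image is closed, and use that closed subspaces of reflexive spaces are reflexive. Your first closedness argument (the isometric image of the space shown complete in \cref{thm:banach} is complete, hence closed) is a slightly quicker substitute for the paper's repetition of the a.e.\ identification argument, which you also give as an alternative.
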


\begin{proof}
By \cref{prop:embedding-graph}, $W^{\alpha,p}_{\Delta}(\T)$ is
isometrically embedded into the product Banach space
$\mathcal{P}:=L^p_{\Delta}(\T)\times L^p(\Omega_{\T},\mu_\Delta\otimes\mu_\Delta)$
via the map $J(u):=(u,\mathcal D_{\alpha,p}u)$.

\smallskip
\noindent\textit{Closedness of $J(W^{\alpha,p}_\Delta(\T))$.}
Let $(J(u_n))=(u_n,\mathcal D_{\alpha,p}u_n)$ be a sequence in
$J(W^{\alpha,p}_\Delta(\T))$ converging to some $(f,G)$ in $\mathcal P$.
Then $u_n\to f$ in $L^p_\Delta(\T)$ and
$\mathcal D_{\alpha,p}u_n\to G$ in $L^p(\Omega_\T)$.
By the same passage-to-limit argument as in \cref{thm:banach}
(extract a $\mu_\Delta$-a.e.\ convergent subsequence of $(u_n)$,
apply Fubini to promote convergence to
$(\mu_\Delta\otimes\mu_\Delta)$-a.e.\ on $\Omega_\T$, and identify
the a.e.\ limit of $\mathcal D_{\alpha,p}u_n$), one obtains
$G=\mathcal D_{\alpha,p}f$ $(\mu_\Delta\otimes\mu_\Delta)$-a.e., hence
$(f,G)=J(f)\in J(W^{\alpha,p}_\Delta(\T))$.
This shows that $J(W^{\alpha,p}_\Delta(\T))$ is closed in $\mathcal P$.

\smallskip
\noindent\textit{Reflexivity.}
Since $1<p<\infty$, both $L^p_\Delta(\T)$ and $L^p(\Omega_\T)$
are reflexive, so $\mathcal P$ is reflexive. Every closed subspace
of a reflexive Banach space is reflexive, so
$J(W^{\alpha,p}_\Delta(\T))$ is reflexive. Since $J$ is an isometric
isomorphism onto its image, $W^{\alpha,p}_\Delta(\T)$ is reflexive.
\end{proof}

\begin{remark}[Hilbert structure]\label{rem:hilbert}
When $p=2$, the space $W^{\alpha,2}_\Delta(\T)$ can be endowed with the inner product
\begin{equation}\label{eq:hilbert-inner-product}
\langle u,v\rangle_{W^{\alpha,2}_{\Delta}(\T)}
:=
\int_{\T}u(t)v(t)\,d\mu_\Delta(t)
+
\iint_{\Omega_{\T}}
\frac{(u(t)-u(s))(v(t)-v(s))}{|t-s|^{1+2\alpha}}
\,d(\mu_\Delta\otimes\mu_\Delta)(t,s).
\end{equation}
This inner product induces the Hilbert norm
\[
\|u\|_{H^{\alpha,2}_\Delta(\T)}
:=
\bigl(\|u\|_{L^2_\Delta(\T)}^2+[u]_{W^{\alpha,2}_\Delta(\T)}^2\bigr)^{1/2}.
\]
This is not the same norm as the sum norm in \eqref{eq:fractional-norm}; however, the two norms are equivalent. Hence, by \cref{thm:banach}, $W^{\alpha,2}_\Delta(\T)$ is a Hilbert space when equipped with this equivalent Hilbert norm.
\end{remark}

\subsection{Examples}\label{subsec:examples}

We conclude with elementary examples illustrating how the construction recovers familiar settings.

\begin{example}[Continuous time scale]\label{ex:continuous}
If $\T=[a,b]$, then $\mu_\Delta$ coincides with the Lebesgue measure on $[a,b]$ \cite[Theorem~5.2]{CabadaVivero2006}, and the seminorm \eqref{eq:gagliardo-seminorm} reduces to the usual one-dimensional Gagliardo seminorm. In this case, $W^{\alpha,p}_{\Delta}(\T)$ agrees with the classical fractional Sobolev space $W^{\alpha,p}([a,b])$.
\end{example}

\begin{example}[Finite discrete time scale]\label{ex:discrete}
Let $\T=\{t_1,\dots,t_N\}$ with $t_1<\dots<t_N$. Then $\Omega_{\T}$ is finite and the seminorm becomes
\[
[u]_{W^{\alpha,p}_{\Delta}(\T)}^p
=
\sum_{i\ne j}
\frac{|u(t_i)-u(t_j)|^p}{|t_i-t_j|^{1+\alpha p}}
\,\mu_\Delta(\{t_i\})\,\mu_\Delta(\{t_j\}).
\]
This is a weighted discrete interaction energy. As noted before, the corresponding space coincides with $L^p_\Delta(\T)$.
\end{example}

\begin{example}[Hybrid time scale]\label{ex:hybrid}
Let $\T=[a,b]\cup D$, where $D$ is a finite discrete subset of $\R\setminus[a,b]$. Then the seminorm splits into three parts: a continuous--continuous contribution over $[a,b]\times[a,b]$, a discrete--discrete contribution over $D\times D$ off the diagonal, and a mixed contribution over $([a,b]\times D)\cup(D\times[a,b])$. This illustrates how the nonlocal energy combines continuous and discrete interactions within a single measure-theoretic framework.
\end{example}
\subsection{Density of regular functions}

\begin{definition}\label{def:regular-class}
Let $\mathbb{T}$ be a time scale satisfying Assumption~\ref{ass:hybrid-class}. 
We denote by $\mathcal{R}(\mathbb{T})$ the linear space of functions 
$u : \mathbb{T} \to \mathbb{R}$ such that:
\begin{itemize}
  \item on each interval component $I_\ell$, the restriction $u|_{I_\ell}$ 
        is the restriction of a function in $C^\infty(\overline{I_\ell})$;
  \item on each singleton component $\{d_j\}$, $u(d_j)$ is an arbitrary 
        real number.
\end{itemize}
\end{definition}

\begin{theorem}[Density]\label{thm:density}
Let $\mathbb{T}$ satisfy Assumption~\ref{ass:hybrid-class}, let $\alpha \in (0,1)$, 
and let $1 \le p < \infty$. Then $\mathcal{R}(\mathbb{T})$ is dense in 
$W^{\alpha,p}_\Delta(\mathbb{T})$.
\end{theorem}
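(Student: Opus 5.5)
Under Assumption~\ref{ass:hybrid-class}, $\T$ is a finite disjoint union of compact intervals $I_1,\dots,I_L$ and singletons $\{d_1,\dots,d_M\}$, and any two distinct components are at distance at least some $\delta>0$. The plan is to split the seminorm along components and treat the pieces separately. Write $\T=\bigcup_k C_k$ for the components. For $u\in W^{\alpha,p}_\Delta(\T)$,
\[
[u]^p_{W^{\alpha,p}_\Delta(\T)}=\sum_{k}\iint_{(C_k\times C_k)\cap\Omega_\T}\frac{|u(t)-u(s)|^p}{|t-s|^{1+\alpha p}}\,d(\mu_\Delta\otimes\mu_\Delta)
+\sum_{k\neq l}\iint_{C_k\times C_l}\frac{|u(t)-u(s)|^p}{|t-s|^{1+\alpha p}}\,d(\mu_\Delta\otimes\mu_\Delta).
\]
The diagonal blocks are of two kinds. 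For an interval $C_k=I_\ell$, the block is the classical Gagliardo seminorm of $u|_{I_\ell}$, since $\mu_\Delta$ is Lebesgue measure there by \cite[Theorem~5.2]{CabadaVivero2006}. For a singleton, the block vanishes because $\Omega_\T$ excludes the diagonal.

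The off-diagonal blocks have kernel bounded by $\delta^{-(1+\alpha p)}$. Using $|a-b|^p\le 2^{p-1}(|a|^p+|b|^p)$ and the finiteness of $\mu_\Delta(\T)$ (which follows from boundedness), exactly as in the proof of Theorem~\ref{thm:nontriviality}, we get
\[
\sum_{k\neq l}\iint_{C_k\times C_l}\cdots\le C(\delta,\alpha,p,\mu_\Delta(\T))\,\|u\|^p_{L^p_\Delta(\T)}.
\]
Applying this bound to $u-v$ gives the key estimate
\[
\|u-v\|^p_{W^{\alpha,p}_\Delta(\T)}\lesssim \|u-v\|^p_{L^p_\Delta(\T)}+\sum_{\ell}[u-v]^p_{W^{\alpha,p}(I_\ell)},
\]
with a constant depending only on the geometry of $\T$.

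Approximation then proceeds component by component.
\begin{itemize}
\item Fix $u\in W^{\alpha,p}_\Delta(\T)$ and $\varepsilon>0$.
\item On each singleton $\{d_j\}$, set $v(d_j):=u(d_j)$. This gives zero error there; the value is well defined because $\mu_\Delta(\{d_j\})>0$ by the assumption.
\item On each interval $I_\ell$, $u|_{I_\ell}\in W^{\alpha,p}(I_\ell)$, since its seminorm is one of the blocks above. Invoke the classical density of $C^\infty(\overline{I_\ell})$ in $W^{\alpha,p}(I_\ell)$ on a bounded interval (see \cite{DiNezzaPalatucciValdinoci2012,AdamsFournier2003}) to pick $v_\ell$ with $\|u-v_\ell\|_{W^{\alpha,p}(I_\ell)}<\varepsilon$.
\item Define $v:=v_\ell$ on $I_\ell$. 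Then $v\in\mathcal R(\T)$, and the key estimate gives $\|u-v\|_{W^{\alpha,p}_\Delta(\T)}\lesssim\varepsilon$.
\end{itemize}
This gluing costs nothing: the components are separated by $\delta$, so no compatibility condition is needed at the junctions, which is exactly why the cross terms are controlled by the $L^p$ norm alone.

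The main obstacle is justifying the classical density on a bounded interval, valid for all $\alpha\in(0,1)$ and $1\le p<\infty$. If a citation is not considered sufficient, the step is proved as follows.
\begin{itemize}
\item Extend $u|_{I_\ell}$ from $I_\ell=[a,b]$ to a function in $W^{\alpha,p}(\R)$ with compact support. Use even reflection across $a$ and $b$, then multiply by a smooth cutoff. Reflection preserves the Gagliardo seminorm up to a constant, because $|t-s|\ge|t-\tilde s|$ for the reflected point $\tilde s$; the cutoff is handled by the standard Lipschitz-multiplier estimate \cite[Lemma~5.3]{DiNezzaPalatucciValdinoci2012}.
\item Mollify the extension. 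Convolution with $\rho_\eta$ converges in $W^{\alpha,p}(\R)$: by Minkowski's integral inequality, the seminorm of $\rho_\eta*w-w$ is dominated by the averages of translates, and translations are continuous in $L^p(\R\times\R)$ applied to $(w(t)-w(s))/|t-s|^{1/p+\alpha}$.
\item Restrict back to $I_\ell$. This yields the required $C^\infty(\overline{I_\ell})$ approximants.
\end{itemize}

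In the case $p=1$ no reflexivity is used anywhere, so the argument covers the whole stated range.
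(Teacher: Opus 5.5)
Your architecture is exactly the paper's. You split the seminorm along components, keep $u(d_j)$ on singletons, approximate on each $I_\ell$ by classical density, and absorb the cross terms into $L^p_\Delta$ norms through $\delta_0^{-(1+\alpha p)}$. Your reflection--cutoff--mollification sketch is a correct self-contained substitute for the classical density result that the paper only cites. If one accepts the paper's identification of $\mu_\Delta|_{I_\ell}$ with Lebesgue measure, your proof is complete.

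That identification, however, is a genuine gap, and the paper's own proof has it too. Since $\mu_\Delta(\{t\})=\sigma(t)-t$, and under Assumption~\ref{ass:hybrid-class} every right endpoint $b_\ell\neq\max\T$ is right-scattered, $\mu_\Delta|_{I_\ell}$ is Lebesgue measure on $[a_\ell,b_\ell)$ plus an atom of mass $m_\ell:=\sigma(b_\ell)-b_\ell\ge\delta_0$ at $b_\ell$. Hence the $I_\ell$-block of $[u]^p_{W^{\alpha,p}_\Delta(\T)}$ also contains $2m_\ell\int_{a_\ell}^{b_\ell}|u(t)-u(b_\ell)|^p(b_\ell-t)^{-1-\alpha p}\,dt$, and $\|u\|^p_{L^p_\Delta(\T)}$ contains $m_\ell|u(b_\ell)|^p$. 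Closeness in the classical $W^{\alpha,p}(a_\ell,b_\ell)$ controls neither term. For $\alpha p<1$ it does not constrain $v_\ell(b_\ell)$ at all. For every $\alpha p$ it does not control the weight $(b_\ell-t)^{-1-\alpha p}$: for instance $(b_\ell-t)^{\alpha}$ lies in $W^{\alpha,p}(a_\ell,b_\ell)$ and vanishes at $b_\ell$, but makes this term infinite. So your key estimate is false with the classical $[u-v]_{W^{\alpha,p}(I_\ell)}$ on its right-hand side, and an arbitrary classical approximant $v_\ell$ need not be close to $u$ in $W^{\alpha,p}_\Delta(\T)$.

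The missing idea is to cut off at $b_\ell$ before mollifying, using that $A_\ell:=\int_{a_\ell}^{b_\ell}|u(t)-u(b_\ell)|^p(b_\ell-t)^{-1-\alpha p}\,dt$ is finite for $u\in W^{\alpha,p}_\Delta(\T)$.
\begin{enumerate}
\item[(a)] Let $w:=u-u(b_\ell)$ on $I_\ell$, so $w(b_\ell)=0$. Let $\psi_r(t):=\psi((b_\ell-t)/r)$ with $\psi\in C^\infty(\R)$, $0\le\psi\le1$, $\psi=0$ on $(-\infty,1]$ and $\psi=1$ on $[2,\infty)$. Replace $u$ on $I_\ell$ by $u(b_\ell)+\psi_r w$.
\item[(b)] The error $(1-\psi_r)w$ vanishes at $b_\ell$. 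Its $L^p$ norm tends to $0$, and so does its atom term, which is at most $2m_\ell\int_{b_\ell-2r}^{b_\ell}|w|^p(b_\ell-t)^{-1-\alpha p}\,dt$.
\item[(c)] Its classical seminorm to the power $p$ is at most $2^{p-1}$ times the sum of two terms. The first is $\iint_{[a_\ell,b_\ell)^2}(1-\psi_r(t))^p|w(t)-w(s)|^p|t-s|^{-1-\alpha p}\,dt\,ds$, which tends to $0$ by dominated convergence. The second is $\int|w(s)|^pK_r(s)\,ds$ with $K_r(s):=\int|\psi_r(t)-\psi_r(s)|^p|t-s|^{-1-\alpha p}\,dt\le C\,r\,(b_\ell-s)^{-1-\alpha p}$, so it is at most $C\,r\,A_\ell$.
\item[(d)] Now $\psi_r w$ vanishes on $[b_\ell-r,b_\ell)$. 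Apply your extension--mollification (extending by zero across $b_\ell$) with radius below $r/2$. This gives smooth $\phi$ vanishing on $[b_\ell-r/2,b_\ell]$, and you set $v_\ell:=u(b_\ell)+\phi$.
\item[(e)] The new error $\psi_r w-\phi$ again vanishes at $b_\ell$. Its atom term is at most $2m_\ell(r/2)^{-1-\alpha p}\|\psi_r w-\phi\|^p_{L^p(a_\ell,b_\ell)}$, which goes to $0$ for fixed $r$.
\end{enumerate}
Choosing $r$ first and then the mollification radius, the rest of your argument goes through.
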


\begin{proof}
Let $u \in W^{\alpha,p}_\Delta(\mathbb{T})$ and $\varepsilon > 0$. We construct 
an approximation on each connected component separately.

\smallskip
\noindent\textbf{Step 1: singleton components.} If $C = \{d_j\}$, set 
$u_\varepsilon(d_j) := u(d_j)$. There is nothing to approximate.

\smallskip
\noindent\textbf{Step 2: interval components.} Let $C = I_\ell = [a_\ell, b_\ell]$. 
Since $\mu_\Delta\big|_{I_\ell}$ coincides with the Lebesgue measure 
\cite[Theorem 5.2]{CabadaVivero2006}, the restriction $u|_{I_\ell}$ belongs to 
$W^{\alpha,p}(I_\ell)$ in the classical Slobodeckij sense. By the standard 
density of $C^\infty(\overline{I_\ell})$ in $W^{\alpha,p}(I_\ell)$ on bounded 
intervals \cite[Theorem 2.4]{DiNezzaPalatucciValdinoci2012}, there exists 
$\varphi_\ell \in C^\infty(\overline{I_\ell})$ such that
\[
\|u - \varphi_\ell\|_{L^p(I_\ell)} 
+ [u - \varphi_\ell]_{W^{\alpha,p}(I_\ell)} 
< \eta,
\]
where $\eta > 0$ will be chosen below.

\smallskip
\noindent\textbf{Step 3: assembly.} Define $u_\varepsilon \in \mathcal{R}(\mathbb{T})$ 
by $u_\varepsilon|_{I_\ell} = \varphi_\ell$ and $u_\varepsilon(d_j) = u(d_j)$. 
Set $w := u - u_\varepsilon$. The $L^p_\Delta$-norm decomposes as
\[
\|w\|^p_{L^p_\Delta(\mathbb{T})} 
= \sum_{\ell=1}^m \|u - \varphi_\ell\|^p_{L^p(I_\ell)} 
\le m \, \eta^p.
\]
For the seminorm, we use the decomposition~\eqref{eq:component-splitting}:
\[
[w]^p_{W^{\alpha,p}_\Delta(\mathbb{T})} 
= \underbrace{\sum_{C \in \mathcal{C}} 
   \iint_{(C \times C) \cap \Omega_\mathbb{T}} 
   \frac{|w(t) - w(s)|^p}{|t-s|^{1+\alpha p}} \, 
   d(\mu_\Delta \otimes \mu_\Delta)}_{\mathrm{intra}}
+ \underbrace{\sum_{\substack{C, C' \in \mathcal{C} \\ C \ne C'}} 
   \iint_{C \times C'} 
   \frac{|w(t) - w(s)|^p}{|t-s|^{1+\alpha p}} \, 
   d(\mu_\Delta \otimes \mu_\Delta)}_{\mathrm{inter}}.
\]
The intra-component contribution vanishes on singletons (since 
$w(d_j) = 0$), and on each $I_\ell$ equals $[u - \varphi_\ell]^p_{W^{\alpha,p}(I_\ell)} \le \eta^p$, 
hence
\[
\mathrm{intra} \le m \, \eta^p.
\]
For the inter-component contribution, Lemma~\ref{lem:cross-component-bounds} gives, 
for $C \ne C'$,
\[
\iint_{C \times C'} \frac{|w(t) - w(s)|^p}{|t-s|^{1+\alpha p}} \, 
d(\mu_\Delta \otimes \mu_\Delta) 
\le \delta_0^{-(1+\alpha p)} 
\iint_{C \times C'} |w(t) - w(s)|^p \, 
d(\mu_\Delta \otimes \mu_\Delta).
\]
Using $|w(t) - w(s)|^p \le 2^{p-1}(|w(t)|^p + |w(s)|^p)$ and Fubini,
\[
\iint_{C \times C'} |w(t) - w(s)|^p \, 
d(\mu_\Delta \otimes \mu_\Delta) 
\le 2^{p-1}\Big(\mu_\Delta(C') \|w\|^p_{L^p_\Delta(C)} 
+ \mu_\Delta(C) \|w\|^p_{L^p_\Delta(C')}\Big).
\]
Summing over $C \ne C'$ and using $\sum_C \|w\|^p_{L^p_\Delta(C)} = \|w\|^p_{L^p_\Delta(\mathbb{T})} \le m\eta^p$,
\[
\mathrm{inter} \le K \, \delta_0^{-(1+\alpha p)} \, \mu_\Delta(\mathbb{T}) \, m \, \eta^p,
\]
where $K = K(p)$ depends only on $p$.

Combining the two estimates,
\[
\|w\|^p_{W^{\alpha,p}_\Delta(\mathbb{T})} 
\le m \eta^p \big(1 + 1 + K \, \delta_0^{-(1+\alpha p)} \mu_\Delta(\mathbb{T})\big) 
=: K' \eta^p.
\]
Choosing $\eta = \varepsilon / (K')^{1/p}$ yields 
$\|u - u_\varepsilon\|_{W^{\alpha,p}_\Delta(\mathbb{T})} < \varepsilon$.
\end{proof}


\section{Comparison with Riemann--Liouville fractional Sobolev spaces on time scales}
\label{sec:comparison-RL}

To place the present results in the existing literature, we compare the Gagliardo-type spaces introduced here with the derivative-based fractional Sobolev spaces on time scales developed by Hu and Li \cite{HuLi2022Right,HuLi2022Left}, and with the weighted framework considered later by Tan, Zhou, and Wang \cite{TanZhouWang2025}.

\subsection{The Riemann--Liouville framework of Hu--Li}

Let $J=[a,b]_{\T}$ be a bounded time-scale interval, let $0<\alpha\le 1$, and let $1\le p<\infty$. In the right-sided setting, Hu and Li define the fractional Sobolev space $W^{\alpha,p}_{\Delta,b-}(J,\mathbb R^N)$ as the set of all functions $u\in L^p_\Delta(J,\mathbb R^N)$ for which there exists $g\in L^p_\Delta(J,\mathbb R^N)$ such that
\[
\int_a^b u(t)\,\bigl({}^{\T}_{a}D_t^\alpha \varphi\bigr)(t)\,\Delta t
=
\int_a^b g(t)\,\varphi(t)\,\Delta t
\qquad
\text{for all }\varphi\in C^\infty_{c,rd}(J,\mathbb R^N).
\]
The function $g$ is called the weak right Riemann--Liouville fractional derivative of $u$. Hu and Li prove that this weak derivative coincides $\Delta$-almost everywhere with the corresponding right Riemann--Liouville derivative and that
\[
W^{\alpha,p}_{\Delta,b-}(J,\mathbb R^N)
=
AC^{\alpha,p}_{\Delta,b-}(J,\mathbb R^N)\cap L^p_\Delta(J,\mathbb R^N).
\]
They equip this space with the norm
\[
\|u\|_{W^{\alpha,p}_{\Delta,b-}}^p
=
\|u\|_{L^p_\Delta(J)}^p
+
\|{}^{\T}_{t}D_b^\alpha u\|_{L^p_\Delta(J)}^p.
\]

In the left-sided setting, Hu and Li define $W^{\alpha,p}_{\Delta,a+}(J,\mathbb R^N)$ analogously, replacing the right-sided fractional derivative by the left-sided one, and consider the norm
\[
\|u\|_{W^{\alpha,p}_{\Delta,a+}}^p
=
\|u\|_{L^p_\Delta(J)}^p
+
\|{}^{\T}_{a}D_t^\alpha u\|_{L^p_\Delta(J)}^p.
\]
These spaces satisfy the usual functional-analytic properties expected of derivative-based fractional Sobolev spaces, including completeness, reflexivity, separability, and several norm equivalences; see \cite{HuLi2022Right,HuLi2022Left}. In a related direction, Tan, Zhou, and Wang introduced weighted fractional Sobolev spaces on timescales and obtained further structural and variational results \cite{TanZhouWang2025}.

\subsection{Structural comparison}

We now identify the structural obstacles to a direct comparison between the present Gagliardo-type framework and the Riemann--Liouville theory.

\begin{proposition}\label{prop:seminorm-structural-properties}
For every $u \in L^p_\Delta(\T)$ and every constant $c \in \mathbb{R}$:
\begin{enumerate}
\item[\textup{(i)}] $[u+c]_{W^{\alpha,p}_\Delta(\T)} = [u]_{W^{\alpha,p}_\Delta(\T)}$ \textup{(translation invariance)};
\item[\textup{(ii)}] $[u]_{W^{\alpha,p}_\Delta(\T)}^p = \displaystyle\iint_{\Omega_\T} \frac{|u(s)-u(t)|^p}{|s-t|^{1+\alpha p}}\,d(\mu_\Delta\otimes\mu_\Delta)(s,t)$ \textup{(symmetry)}.
\end{enumerate}
\end{proposition}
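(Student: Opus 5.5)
Both parts follow directly from the definition \eqref{eq:gagliardo-seminorm}, read as an integral of a nonnegative measurable function (\cref{lem:kernel-wellposed}(i)). We allow the value $+\infty$, so that the identities hold for every $u\in L^p_\Delta(\T)$ and not only on $W^{\alpha,p}_\Delta(\T)$.

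For \textup{(i)}, fix a constant $c$. For every $(t,s)\in\Omega_\T$ we have the pointwise identity
\[
(u(t)+c)-(u(s)+c)=u(t)-u(s).
\]
Hence the two integrands, for $u+c$ and for $u$, coincide everywhere on $\Omega_\T$, and so the integrals coincide in $[0,\infty]$. There is one measure-theoretic point to check. If $\mu_\Delta(\T)=\infty$, then $u+c$ need not lie in $L^p_\Delta(\T)$. The seminorm \eqref{eq:gagliardo-seminorm} still makes sense for any measurable function, since \cref{lem:kernel-wellposed} only requires measurability. So we interpret \textup{(i)} as an identity of integrals of measurable functions, which is exactly what the pointwise identity gives. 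I would add a brief remark to this effect.

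For \textup{(ii)}, consider the swap map $\tau(t,s):=(s,t)$ on $\T\times\T$. It is a measurable bijection and its own inverse. It preserves the diagonal, so it maps $\Omega_\T$ onto itself. It also preserves the product measure: $\tau_\#(\mu_\Delta\otimes\mu_\Delta)=\mu_\Delta\otimes\mu_\Delta$. To see this, note that the two measures agree on rectangles, because $\tau^{-1}(A\times B)=B\times A$ and $\mu_\Delta(B)\mu_\Delta(A)=\mu_\Delta(A)\mu_\Delta(B)$. Alternatively, the invariance follows from Tonelli's theorem applied to nonnegative functions, interchanging the order of the iterated integrals. Either route requires $\sigma$-finiteness of $\mu_\Delta$, which holds since $\mu_\Delta$ is finite on bounded sets.

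The change-of-variables formula for the pushforward measure, applied to the nonnegative integrand, then gives the symmetric expression. Independently, the integrand is itself invariant under $\tau$, since $|u(t)-u(s)|=|u(s)-u(t)|$ and $|t-s|=|s-t|$. So the identity in \textup{(ii)} also holds pointwise, even before any change of variables.

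The only step needing care is the invariance of the product measure under $\tau$. This is where $\sigma$-finiteness enters, through uniqueness of the product measure or through Tonelli's theorem. Everything else is a pointwise identity.
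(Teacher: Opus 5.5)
Your proposal is correct and follows the paper's own argument: (i) comes from the pointwise identity $(u(t)+c)-(u(s)+c)=u(t)-u(s)$, and (ii) from the symmetry of the integrand together with invariance of $\mu_\Delta\otimes\mu_\Delta$ under $(t,s)\mapsto(s,t)$. The extra points you raise are sound refinements that the paper leaves implicit: the $\sigma$-finiteness of $\mu_\Delta$, and reading (i) as an identity in $[0,\infty]$ when $u+c\notin L^p_\Delta(\T)$.
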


\begin{proof}
Both claims follow from the definition: (i) since $(u(t)+c)-(u(s)+c)=u(t)-u(s)$, and (ii) by symmetry of the kernel $|t-s|^{-(1+\alpha p)}$ and invariance of $\mu_\Delta\otimes\mu_\Delta$ under the transposition $(t,s)\mapsto(s,t)$.
\end{proof}

\begin{proposition}\label{prop:no-one-sided-comparison}
A direct norm equivalence between the Gagliardo seminorm and a single one-sided Riemann--Liouville derivative norm cannot hold on the full Gagliardo space. More precisely, no equivalence of the form
\[
\|{}^{\T}_{a}D_t^\alpha u\|_{L^p_\Delta(J)}
\asymp
[u]_{W^{\alpha,p}_\Delta(J)}
\]
can hold uniformly on the whole class of functions for which both sides are meaningful, and the same obstruction applies to the right-sided derivative ${}^{\T}_{t}D_b^\alpha$.
\end{proposition}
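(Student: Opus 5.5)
The idea is to use the structural mismatch recorded in \cref{prop:seminorm-structural-properties}. The Gagliardo seminorm is invariant under adding constants, while the left Riemann--Liouville derivative of a nonzero constant is not zero. We therefore test both sides on constants, and this can be done already on the continuous model $J=[a,b]$ (a continuous component of $\T$), where $\mu_\Delta$ is Lebesgue measure by \cite[Theorem~5.2]{CabadaVivero2006}. The equivalence can fail in two directions, and one direction already suffices. Suppose there were a constant $C>0$ with $\|{}^{\T}_{a}D_t^\alpha u\|_{L^p_\Delta(J)}\le C[u]_{W^{\alpha,p}_\Delta(J)}$ for all admissible $u$. We will contradict this with $u\equiv 1$.

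\textbf{Step 1.} For $u\equiv c$, \cref{prop:seminorm-and-norm} gives $[u]_{W^{\alpha,p}_\Delta(J)}=0$.

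\textbf{Step 2.} On an interval we have ${}_aD_t^\alpha 1=(t-a)^{-\alpha}/\Gamma(1-\alpha)$, which is not identically zero. This function lies in $L^p(J)$ exactly when $\alpha p<1$. On a general time scale the time-scale Riemann--Liouville derivative of a constant is likewise a nonzero function near $a$ whenever $J$ contains a nondegenerate interval starting at $a$. In the subcritical range $\alpha p<1$, constants therefore belong to both spaces, the right side of the inequality vanishes and the left side is positive, which is the contradiction.

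\textbf{Step 3.} If $\alpha p\ge 1$, the derivative of a constant is not in $L^p$, so constants are not admissible. Instead I would use translation invariance on a genuine test function. Take $\varphi$ with ${}_aD_t^\alpha\varphi\in L^p$, for instance $\varphi(t)=(t-a)^{\beta}$ with $\beta>\alpha-1/p$, and compare $\varphi$ with $\varphi+c$. Their seminorms agree by \cref{prop:seminorm-structural-properties}(i), while their derivatives differ by $c(t-a)^{-\alpha}/\Gamma(1-\alpha)$. Because the constant shift is not itself admissible, I would replace it by a smooth cutoff $c\chi_\varepsilon$ that equals $0$ near $a$ and $c$ on $[a+\varepsilon,b]$. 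The aim is to show that $[c\chi_\varepsilon]$ stays bounded or controllable while $\|{}_aD^\alpha_t(c\chi_\varepsilon)\|_{L^p}$ becomes unbounded after normalization, or the reverse. A cleaner route for the reverse inequality uses a bump: take $u_n(t)=\psi(n(t-t_0))$ concentrated near an interior point $t_0$. The seminorm scales like $n^{\alpha-1/p}$. The one-sided derivative has a nonlocal tail $\sim(t-t_0)^{-1-\alpha}n^{-1}$ to the right of the support, which is the asymmetric term. Comparing these two scalings should break the ratio.

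\textbf{Main obstacle.} The supercritical case is the hard part, because constants are excluded and the argument has to detect one-sidedness through scaling or tail asymmetry. What I would try first is the reflection $u(t)\mapsto u(a+b-t)$. It preserves the Gagliardo seminorm by the symmetry in \cref{prop:seminorm-structural-properties}(ii), but it swaps the left and right derivatives. So an equivalence with the left derivative alone would force the left and right derivative norms to be comparable on all admissible functions. I would then rule that out with the explicit family $(t-a)^\beta$ versus $(b-t)^\beta$ as $\beta\downarrow\alpha-1/p$. The same argument, with the roles of the two endpoints exchanged, handles ${}^{\T}_{t}D_b^\alpha$.
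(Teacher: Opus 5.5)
Your treatment of $\alpha p<1$ is exactly the paper's argument and is fine. The gap is the case $\alpha p\ge 1$. There you list strategies (cutoffs, rescaled bumps, reflection) but carry none of them out.

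The missing observation is that no new test function is needed. The statement asserts failure \emph{on the full Gagliardo space}, and the paper uses the same constant in the other direction. Indeed $1\in W^{\alpha,p}_\Delta(J)$ with $[1]_{W^{\alpha,p}_\Delta(J)}=0$, while ${}_aD^\alpha_t 1=(t-a)^{-\alpha}/\Gamma(1-\alpha)\notin L^p(a,b)$ when $\alpha p\ge 1$. So the derivative norm is $+\infty$ on an element of the Gagliardo space, and no bound $\|{}_aD^\alpha_t u\|_{L^p}\le C\,[u]_{W^{\alpha,p}_\Delta(J)}$ can hold there; the same applies to ${}_tD^\alpha_b$ with $(b-t)^{-\alpha}$. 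By declaring constants inadmissible, you replaced the claim with a strictly stronger one: failure on the class where both sides are finite. Your sketches do not establish this stronger claim.

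In fact they cannot in general.
\begin{itemize}
\item \emph{Bump test.} For $\alpha p>1$, both quantities for $u_n=\psi(n(\cdot-t_0))$ are of order $n^{\alpha-1/p}$. The tail $n^{-1}(t-t_0)^{-1-\alpha}$ has $L^p$ norm of the same order, so the ratio does not degenerate.
\item \emph{Cutoff test.} For $\chi_\varepsilon(t)=\chi((t-a)/\varepsilon)$ and $\alpha p>1$, both quantities are of order $\varepsilon^{1/p-\alpha}$, so again the ratio does not degenerate. The cutoff does work at $\alpha p=1$, where the derivative norm grows like $(\log(1/\varepsilon))^{1/p}$ while the seminorm stays bounded, but that computation is not in your proposal.
\item \emph{Reflection.} The map $t\mapsto a+b-t$ only makes the left and right norms comparable on functions admissible for both, i.e.\ vanishing at $a$ and at $b$. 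Your pair $(t-a)^\beta$, $(b-t)^\beta$ is not in that class: $(b-t)^\beta$ does not vanish at $a$, so its left derivative is not in $L^p$.
\end{itemize}

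More fundamentally, take $p=2$ and $1/2<\alpha<1$. The class where both sides are finite is exactly $\{u\in W^{\alpha,2}(a,b): u(a)=0\}$, and on it $\|{}_aD^\alpha_t u\|_{L^2}\asymp[u]_{W^{\alpha,2}(a,b)}$.
\begin{itemize}
\item For one direction, write $u={}_aI^\alpha_t g$ as the restriction of $I^\alpha_+(g\mathbf{1}_{(a,b)})$ on $\R$.
\item For the other, extend $u$ by zero across $a$; the cost is controlled by the endpoint Hardy inequality, since $u(a)=0$.
\item In both cases, Plancherel with $|(i\xi)^\alpha|=|\xi|^\alpha$ does the rest.
\end{itemize}
So under your reading the supercritical statement is false in that range. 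The case $\alpha p\ge 1$ can only be settled as the paper does it, by allowing the derivative norm to take the value $+\infty$ on the full Gagliardo space.
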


\begin{proof}
It is enough to look at the continuous time scale $J=[a,b]$. By Proposition~\ref{prop:seminorm-structural-properties}\,(i), every constant function has zero Gagliardo seminorm. In contrast, for a nonzero constant one has
\[
{}^{\T}_a D^\alpha_t(1)
= \frac{(t-a)^{-\alpha}}{\Gamma(1-\alpha)}.
\]
If $\alpha p<1$, this function belongs to $L^p(a,b)$ and is not identically zero; hence the left-hand side is positive while the Gagliardo seminorm is zero. This directly contradicts any two-sided equivalence. If $\alpha p\ge 1$, the same constant already shows a different obstruction: it belongs to the Gagliardo space but its one-sided Riemann--Liouville derivative is not in $L^p(a,b)$. Thus the derivative-based expression is not even finite on the full Gagliardo space. In either case, a full-space equivalence with one one-sided derivative is impossible.
\end{proof}

\begin{remark}\label{rem:bilateral-candidate}
Proposition~\ref{prop:no-one-sided-comparison} shows that any meaningful comparison must either be formulated modulo constants or involve subspaces with boundary conditions. Moreover, since $[u]_{W^{\alpha,p}_\Delta(J)}$ is symmetric in $(t,s)$ by Proposition~\ref{prop:seminorm-structural-properties}\,(ii), while each of the spaces $W^{\alpha,p}_{\Delta,a+}(J)$ and $W^{\alpha,p}_{\Delta,b-}(J)$ is inherently one-sided, the natural derivative-based candidate for comparison is the bilateral space
\[
X^{\alpha,p}_\Delta(J)
:=
W^{\alpha,p}_{\Delta,a+}(J)\cap W^{\alpha,p}_{\Delta,b-}(J),
\]
equipped with the norm
\[
\|u\|_{X^{\alpha,p}_\Delta(J)}
:=
\|u\|_{L^p_\Delta(J)}
+
\|{}^{\T}_{a}D_t^\alpha u\|_{L^p_\Delta(J)}
+
\|{}^{\T}_{t}D_b^\alpha u\|_{L^p_\Delta(J)}.
\]
\end{remark}

\begin{proposition}[Continuous embedding from the bilateral
Riemann--Liouville space]\label{prop:inclusion}
Let $J=[a,b]$ be a compact interval (viewed as a
continuous time scale) and let $0<\alpha<1$, $1<p<\infty$.
Then
\[
W^{\alpha,p}_{\Delta,a+}(J)\cap W^{\alpha,p}_{\Delta,b-}(J)
\hookrightarrow W^{\alpha,p}_\Delta(J)
\]
with continuous embedding.
\end{proposition}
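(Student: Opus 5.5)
The plan is to reduce the statement to the behaviour of Riemann--Liouville fractional integrals of $L^p$ functions on the continuous interval $J=[a,b]$. There $\mu_\Delta$ is Lebesgue measure (\cref{ex:continuous}), so $W^{\alpha,p}_\Delta(J)=W^{\alpha,p}(a,b)$ and the time-scale derivatives reduce to the classical ones. Given $u$ in the intersection, set $g_+:={}^{\T}_aD^\alpha_t u$ and $g_-:={}^{\T}_tD^\alpha_b u$, both in $L^p(a,b)$. By the Hu--Li identification $W^{\alpha,p}_{\Delta,a+}=AC^{\alpha,p}_{\Delta,a+}\cap L^p_\Delta$, I would write
\[
u(t)=c_+\,(t-a)^{\alpha-1}+\bigl(I^\alpha_{a+}g_+\bigr)(t),
\qquad
u(t)=c_-\,(b-t)^{\alpha-1}+\bigl(I^\alpha_{b-}g_-\bigr)(t),
\]
where the second representation comes from the right-sided space. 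The target is an estimate
\[
[u]_{W^{\alpha,p}(a,b)}\le C\bigl(\|u\|_{L^p}+\|g_+\|_{L^p}+\|g_-\|_{L^p}\bigr).
\]

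\textbf{Step 1 (singular terms).} First I would dispose of the singular boundary terms. A power $(t-a)^{\gamma}$ with $\gamma=\alpha-1$ has infinite Gagliardo seminorm near $a$: this is the computation in the proof of \cref{thm:nontriviality}, since $\alpha-1\le\alpha-1/p$. So $c_\pm\neq0$ cannot be allowed near the corresponding endpoint. I would localize with a smooth partition of unity $\chi_1+\chi_2=1$, where $\chi_1=1$ near $a$ and $\chi_2=1$ near $b$. Near $b$ the left representation is regular, so one uses $\chi_2 u=\chi_2(c_+(t-a)^{\alpha-1}+I^\alpha_{a+}g_+)$, where the first term is smooth on $\operatorname{supp}\chi_2$. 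Symmetrically, near $a$ one uses the right representation. The hope is that each endpoint is controlled by the derivative taken \emph{from the opposite side}, and that $|c_\pm|$ is bounded by $\|u\|_{L^p}+\|g_\pm\|_{L^p}$ by testing on a fixed subinterval. I expect to have to verify, or else force through the bilateral hypothesis, that $\chi_1 u$ really lies in $I^\alpha_{b-}(L^p)$ plus a smooth function. Multiplication by smooth cut-offs is harmless for the Gagliardo seminorm: the commutator is bounded by $\|\chi'\|_\infty$ times an $L^p$ norm, since $\alpha<1$.

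\textbf{Step 2 (fractional integrals).} Next I would show $[I^\alpha_{a+}g]_{W^{\alpha,p}}\lesssim\|g\|_{L^p}$, and similarly for $I^\alpha_{b-}$. Extending $g$ by zero to $\R$, write
\[
I^\alpha g(t+h)-I^\alpha g(t)=\frac{1}{\Gamma(\alpha)}\int g(\tau)\,K_h(t-\tau)\,d\tau,
\qquad
K_h(x)=(x+h)_+^{\alpha-1}-x_+^{\alpha-1}.
\]
The seminorm becomes $\int_0^{b-a}\|I^\alpha g(\cdot+h)-I^\alpha g\|_{L^p}^p\,h^{-1-\alpha p}\,dh$.

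\textbf{Main obstacle.} The crude Young bound $\|K_h\|_{L^1}\sim h^\alpha$ only gives a Besov $B^\alpha_{p,\infty}$ estimate. This has a logarithmically divergent $h$-integral, so it does not suffice. To close the estimate I would pass to the Fourier side, using the symbols $(\pm i\xi)^\alpha$ of the one-sided derivatives on $\R$. The point of having \emph{both} derivatives is that $|\xi|^\alpha$ is a combination of $(i\xi)^\alpha$ and $(-i\xi)^\alpha$ with bounded Mikhlin multipliers. This puts $u$ (after extension) in the Bessel potential space $H^{\alpha,p}$, and for $p\ge2$ the embedding $H^{\alpha,p}\hookrightarrow B^\alpha_{p,p}=W^{\alpha,p}$ finishes the argument.

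For $1<p<2$ this embedding fails. I would then try to exploit the vanishing-trace and one-sided support structure directly with a weighted Hardy inequality in $h$. I flag this range as the genuinely delicate point, where the statement may require $p\ge2$ or extra hypotheses.
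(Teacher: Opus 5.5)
Your outline is sound for $2\le p<\infty$: the two Hu--Li representations, cut-offs that let each endpoint be handled by the representation from the \emph{opposite} side, the bound $|c_\pm|\lesssim\|u\|_{L^p}+\|g_\pm\|_{L^p}$ from testing on a subinterval where the power weight is bounded above and below, and then $H^{\alpha,p}(\R)=F^{\alpha}_{p,2}\subset F^{\alpha}_{p,p}=B^{\alpha}_{p,p}(\R)$.

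Two side remarks on that part. First, what you wanted to verify in Step~1 is automatic: the right-sided representation holds on all of $J$, so $\chi_1u=\chi_1c_-(b-t)^{\alpha-1}+\chi_1I^\alpha_{b-}g_-$ with a smooth first term, and you never need $c_\pm=0$. Second, on $\R$ a single one-sided derivative already controls $|\xi|^\alpha\hat u$. Indeed $(\pm i\xi)^\alpha=e^{\pm i\pi\alpha\operatorname{sgn}\xi/2}|\xi|^\alpha$, and $e^{\pm i\pi\alpha\operatorname{sgn}\xi/2}$ is a combination of the identity and the Hilbert transform. The bilateral hypothesis is only needed for the boundary terms.

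The range $1<p<2$, however, is not a delicate case that a Hardy-type argument could rescue. The proposition is false there, so the gap you flag cannot be closed. Fix $\chi\in C^\infty_c(a,b)$ with $\chi\not\equiv0$ and set
\[
u(t)=\chi(t)\sum_{j\ge1}j^{-1/p}\,2^{-j\alpha}\cos(2^jt).
\]
A Littlewood--Paley computation gives $\sum_k2^{k\alpha p}\|P_ku\|_{L^p}^p\gtrsim\sum_kk^{-1}=\infty$, so $u\notin B^{\alpha}_{p,p}(\R)$. On the other hand, the square function $\bigl(\sum_k|2^{k\alpha}P_ku|^2\bigr)^{1/2}$ is dominated by $\bigl(\sum_kk^{-2/p}\bigr)^{1/2}$ times a fixed $L^p$ function, which is finite because $p<2$. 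Hence $u\in H^{\alpha,p}(\R)$.

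Because $u$ vanishes near $a$ and $b$, the derivatives ${}_aD^\alpha_tu$ and ${}_tD^\alpha_bu$ on $J$ are restrictions of the Liouville derivatives. These lie in $L^p(\R)$ by the multiplier identity above, and $u=I^\alpha_{a+}g_+=I^\alpha_{b-}g_-$ with $c_\pm=0$, so $u$ belongs to both Hu--Li spaces. Yet pairs $(t,s)$ with $s\notin(a,b)$ contribute only a finite amount for such $u$. So $[u]^p_{W^{\alpha,p}_\Delta(J)}$ differs from $[u]^p_{W^{\alpha,p}(\R)}=\infty$ by a finite quantity, i.e.\ $u\notin W^{\alpha,p}_\Delta(J)$.

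The paper's own proof gives no argument that could settle this. It calls the bound well known and cites \cite[Proposition~2.2]{DiNezzaPalatucciValdinoci2012}, which is the embedding $W^{1,p}\hookrightarrow W^{s,p}$, and \cite[Section~2]{FrankSeiringer2008}, which is about ground-state representations. Neither involves Riemann--Liouville derivatives, and nothing there distinguishes $p<2$ from $p\ge2$. The statement should be restricted to $2\le p<\infty$; in that range your argument is an actual proof rather than a citation.

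Note also that the function $u$ above is continuous with $u(a)=u(b)=0$. So for $1<p<2$ and $\alpha p>1$ it lies in $X^{\alpha,p}_{\Delta,0}(J)$ but not in $W^{\alpha,p}_{\Delta,0}(J)$. This contradicts the set equality in \cref{thm:RL-equivalence}, whose upper bound is taken from this proposition.
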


\begin{proof}
On $J=[a,b]$, the space $W^{\alpha,p}_\Delta(J)$
coincides with $W^{\alpha,p}(a,b)$ by Example~\ref{ex:continuous}.
It is well known that for $u\in L^p(a,b)$ possessing a
weak left Riemann--Liouville derivative
${}_aD^\alpha_t u\in L^p$ and a weak right derivative
${}_tD^\alpha_b u\in L^p$, the function $u$ belongs to
$W^{\alpha,p}(a,b)$ with
\[
[u]_{W^{\alpha,p}(a,b)}
\le C\bigl(\|{}_aD^\alpha_t u\|_{L^p(J)}
+ \|{}_tD^\alpha_b u\|_{L^p(J)}\bigr);
\]
this is the standard continuous-interval comparison between fractional potential/derivative spaces and Slobodeckij--Gagliardo spaces; see, for example, \cite[Proposition~2.2]{DiNezzaPalatucciValdinoci2012} and the discussion of derivative-based fractional formulations in \cite[Section~2]{FrankSeiringer2008}. This continuous comparison is used here only for positioning and is not used in the proofs on hybrid time scales.
\end{proof}

\begin{remark}\label{rem:reverse-embedding-question}
Proposition~\ref{prop:inclusion} only addresses the purely continuous case. On a hybrid time scale, the Gagliardo seminorm generally contains continuous--continuous, discrete--discrete, and mixed continuous--discrete interaction terms; see Example~\ref{ex:hybrid}. These mixed contributions have no direct analogue in a single one-sided Riemann--Liouville norm and are one of the main obstacles to a converse embedding. For this reason, any reverse inclusion should be regarded as a separate problem, likely requiring additional geometric or regularity assumptions.
\end{remark}

\subsection{Equivalence with the bilateral Riemann--Liouville norm in the supercritical regime}\label{subsec:RL-equivalence-supercritical}

Proposition~\ref{prop:inclusion} provides one direction of comparison
between the bilateral Riemann--Liouville space and the Gagliardo space
on a continuous interval. We now show that, in the supercritical regime
$\alpha p > 1$, the inclusion can be promoted to a norm equivalence
on a precise subspace defined by vanishing boundary traces.

When $\alpha p > 1$, the one-dimensional fractional Sobolev embedding
gives $W^{\alpha,p}(a,b) \hookrightarrow C^{0,\alpha-1/p}([a,b])$
(see~\cite[Theorem~8.2]{DiNezzaPalatucciValdinoci2012}), so every
element of $W^{\alpha,p}_\Delta(J)$ admits a continuous representative
and the trace values $u(a)$, $u(b)$ are well defined pointwise. We
introduce the subspace
\begin{equation}\label{eq:W0-def}
W^{\alpha,p}_{\Delta,0}(J)
:= \bigl\{\, u \in W^{\alpha,p}_\Delta(J) \,:\, u(a) = u(b) = 0 \,\bigr\},
\end{equation}
endowed with the restriction of the Gagliardo norm. Similarly,
$X^{\alpha,p}_{\Delta,0}(J) := X^{\alpha,p}_\Delta(J) \cap
\bigl\{ u(a) = u(b) = 0 \bigr\}$.

\begin{theorem}[Equivalence with the bilateral Riemann--Liouville space]\label{thm:RL-equivalence}
Let $J = [a,b]$ be viewed as a continuous time scale, let $0 < \alpha < 1$,
$1 < p < \infty$, and assume
\[
  \alpha p > 1.
\]
Then $W^{\alpha,p}_{\Delta,0}(J) = X^{\alpha,p}_{\Delta,0}(J)$ as sets,
and there exist constants $c_1, c_2 > 0$, depending only on
$a, b, \alpha, p$, such that for every $u \in W^{\alpha,p}_{\Delta,0}(J)$,
\begin{equation}\label{eq:RL-norm-equivalence}
c_1 \bigl( \|{}_a D^\alpha_t u\|_{L^p(J)} + \|{}_t D^\alpha_b u\|_{L^p(J)} \bigr)
\;\le\; [u]_{W^{\alpha,p}_\Delta(J)}
\;\le\; c_2 \bigl( \|{}_a D^\alpha_t u\|_{L^p(J)} + \|{}_t D^\alpha_b u\|_{L^p(J)} \bigr).
\end{equation}
In particular, the norms $\|\cdot\|_{W^{\alpha,p}_\Delta(J)}$ and
$\|\cdot\|_{X^{\alpha,p}_\Delta(J)}$ are equivalent on this subspace.
\end{theorem}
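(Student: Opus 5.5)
The plan is to reduce everything to the real line by zero extension and then use Fourier multipliers. For $u\in W^{\alpha,p}_{\Delta,0}(J)$, with $\alpha p>1$ and $u(a)=u(b)=0$, let $\tilde u$ denote the extension of $u$ by zero to $\R$.

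\textbf{Step 1: extension by zero.} I would first show that $\tilde u\in W^{\alpha,p}(\R)$ with
\[
[\tilde u]_{W^{\alpha,p}(\R)}\asymp [u]_{W^{\alpha,p}(a,b)}+\|u\|_{L^p(J)} .
\]
The lower bound $[\tilde u]_{W^{\alpha,p}(\R)}\ge[u]_{W^{\alpha,p}(a,b)}$ is trivial. The upper bound reduces to a fractional Hardy estimate
\[
\int_a^b \frac{|u(t)|^p}{\mathrm{dist}(t,\{a,b\})^{\alpha p}}\,dt\le C\,[u]^p_{W^{\alpha,p}(a,b)},
\]
since the cross terms $\int_J\int_{\R\setminus J}|u(t)|^p|t-s|^{-1-\alpha p}\,ds\,dt$ are controlled by exactly this weighted integral. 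This Hardy inequality holds for functions vanishing at the endpoints precisely when $\alpha p>1$. I would prove it via the Hölder continuity $|u(t)|=|u(t)-u(a)|\le C[u]\,(t-a)^{\alpha-1/p}$ given by \cite[Theorem~8.2]{DiNezzaPalatucciValdinoci2012}. Combined with a local averaging argument (comparing $u(t)$ with averages of $u$ over dyadic intervals $[a+2^{-k-1}\ell,\,a+2^{-k}\ell]$, where $\ell=b-a$), this yields the weighted estimate.

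\textbf{Step 2: identification of Riemann--Liouville derivatives with Fourier multipliers.} On $J$, ${}_aD^\alpha_t u$ coincides with the Liouville derivative $D^\alpha_+\tilde u$ on $\R$, whose Fourier symbol is $(i\xi)^\alpha$, because $\tilde u$ vanishes to the left of $a$. Similarly, ${}_tD^\alpha_b u=D^\alpha_-\tilde u$ on $J$, with symbol $(-i\xi)^\alpha$. Outside $J$ these global derivatives do not vanish. For $t>b$,
\[
D^\alpha_+\tilde u(t)=c_\alpha\int_a^b u(s)\,(t-s)^{-1-\alpha}\,ds ,
\]
obtained by differentiating under the integral with $u(b)=0$. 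This tail is bounded by $C\|u\|_{L^1}\,(t-b)^{-1-\alpha}$ far away, and near $b$ it is controlled by the same Hardy quantity from Step 1. So I will obtain
\[
\|D^\alpha_\pm\tilde u\|_{L^p(\R)}\asymp \|{}_aD^\alpha_tu\|_{L^p(J)}+\|{}_tD^\alpha_bu\|_{L^p(J)}+\text{(Hardy term)} .
\]
Since $|(\pm i\xi)^\alpha|=|\xi|^\alpha$, the multipliers $(\pm i\xi)^\alpha/|\xi|^\alpha$ and their inverses are homogeneous of degree zero and smooth off the origin. By Mikhlin's theorem (this is where $1<p<\infty$ is used), $\|D^\alpha_+\tilde u\|_{L^p}\asymp\|D^\alpha_-\tilde u\|_{L^p}\asymp\|(-\Delta)^{\alpha/2}\tilde u\|_{L^p}$.

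\textbf{Step 3: comparison of the Riesz potential norm with the Gagliardo seminorm, and assembly.} This is the main obstacle. For $p\ne2$, $\|(-\Delta)^{\alpha/2}f\|_{L^p}$ is a Bessel/Triebel--Lizorkin norm $\dot F^\alpha_{p,2}$, while the Gagliardo seminorm is the Besov norm $\dot B^\alpha_{p,p}$, and these are not equivalent in general. So the claimed two-sided estimate \eqref{eq:RL-norm-equivalence} can only be reached via $p=2$ (Plancherel) or via embeddings losing $\varepsilon$ in $\alpha$. My first attempt would be to follow the author's likely route: invoke Proposition~\ref{prop:inclusion} for the upper bound, then, for the lower bound, use the pointwise Marchaud representation
\[
{}_aD^\alpha_tu(t)=\frac{u(t)}{\Gamma(1-\alpha)(t-a)^\alpha}+c\int_a^t\frac{u(t)-u(s)}{(t-s)^{1+\alpha}}\,ds .
\]
The first term is handled by Hardy (Step 1). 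The second is bounded in $L^p$ by Minkowski after the split $|t-s|<r$ versus $|t-s|\ge r$ together with a maximal-function argument. I expect this second term to be exactly where the argument is genuinely valid only for $p=2$ or with a Triebel--Lizorkin correction, so I would check the case $p=2$ fully via Plancherel and flag the general $p$ case as the delicate point.
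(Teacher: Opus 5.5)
Your Step~3 diagnosis is right, and the gap you flag cannot be closed by any argument: for $p\neq 2$ the statement is false, even with $\alpha p>1$ and vanishing traces.

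Take $u$ supported in a fixed compact $K\subset(a,b)$. The tails of $D^\alpha_\pm\tilde u$ off $J$ have $L^p$ norm at most $C_K\|u\|_{L^p}$. Also $[\tilde u]_{W^{\alpha,p}(\R)}\le[u]_{W^{\alpha,p}(J)}+C_K\|u\|_{L^p}$. So, by your Mikhlin step, the two-sided estimate would force $\|\tilde u\|_{\dot F^\alpha_{p,2}}\asymp\|\tilde u\|_{\dot B^\alpha_{p,p}}$ up to $O(\|u\|_{L^p})$.

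Now take the lacunary functions $u_N(t)=\chi(t)\sum_{j=1}^N 2^{-j\alpha}\cos(2^jt)$ with $\chi\in C^\infty_c(a,b)$. They satisfy $\|u_N\|_{L^p}=O(1)$ and $[u_N]_{W^{\alpha,p}(J)}\asymp N^{1/p}$, while their bilateral Riemann--Liouville norm is $\asymp N^{1/2}$. Hence the upper bound fails for $p<2$ and the lower bound fails for $p>2$. By the closed graph theorem, even the set equality $W^{\alpha,p}_{\Delta,0}(J)=X^{\alpha,p}_{\Delta,0}(J)$ fails.

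What your route does prove, via $B^\alpha_{p,\min(p,2)}\subset F^\alpha_{p,2}\subset B^\alpha_{p,\max(p,2)}$, is:
\begin{itemize}
\item the upper bound for $p\ge 2$;
\item the lower bound for $1<p\le 2$;
\item the full statement at $p=2$ (so $\alpha>1/2$).
\end{itemize}

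So do not try to follow the paper's route. It does not evade this obstruction; it contains invalid steps.
\begin{itemize}
\item Its upper bound is Proposition~\ref{prop:inclusion}. The cited sources do not contain such a comparison, and the inclusion is false for $p<2$.
\item Its lower bound uses $\int_a^b|u|^p(t-a)^{-\alpha p}\,dt\ge c\,\|{}_aD^\alpha_t u\|^p_{L^p(J)}$ for $u={}_aI^\alpha_t g$. This is deduced from a ``pointwise lower bound'' $|{}_aI^\alpha_t g(t)|\gtrsim(t-a)^{\alpha-1}\int_a^t|g|$, which fails whenever $g$ changes sign. It is then combined with the maximal inequality, which bounds averages from above, not below.
\item Concretely, for $g=\cos(N\,\cdot)$ the weighted integral is $O(1/N)$, while $\|g\|_{L^p}$ stays bounded below.
\end{itemize}
Your Marchaud variant has a related flaw. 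Minkowski's inequality only yields $\int_0^\infty\|u-u(\cdot-\tau)\|_{L^p}\,\tau^{-1-\alpha}\,d\tau$, a $B^\alpha_{p,1}$-type quantity that the Gagliardo seminorm does not control for any $p>1$.

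For the true cases your plan works, with one repair. In the direction $[u]\lesssim$ (Riemann--Liouville norms), you cannot bound the tail of $D^\alpha_+\tilde u$ on $(b,\infty)$ by the Hardy quantity of Step~1. That would be circular, since Step~1 controls that quantity by $[u]_{W^{\alpha,p}(J)}$ itself. Instead, argue as follows:
\begin{itemize}
\item The tail is at most $C\|(b-t)^{-\alpha}u\|_{L^p(J)}$, by the Schur test for a kernel homogeneous of degree $-1$.
\item Write $u={}_tI^\alpha_b h$ with $h={}_tD^\alpha_b u$. The singular term $c(b-t)^{\alpha-1}$ is excluded by continuity and $u(b)=0$.
\item Apply the classical Hardy inequality $\|(b-t)^{-\alpha}\,{}_tI^\alpha_b h\|_{L^p(J)}\le C\|h\|_{L^p(J)}$.
\end{itemize}
This is exactly where the right-sided derivative is needed to control the left-sided one.

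In Step~1, the localized Morrey estimate on $[a,a+2(t-a)]$ that you mention is essential. The global H\"older bound alone only produces the non-integrable weight $(t-a)^{-1}$.
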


\begin{proof}[Proof]
\textit{Upper bound.}
The upper bound in~\eqref{eq:RL-norm-equivalence} is exactly the
content of Proposition~\ref{prop:inclusion}, restricted to functions
with vanishing trace at $a$ and $b$.

\smallskip
\noindent\textit{Lower bound.}
We establish the inequality
\begin{equation}\label{eq:RL-lower-left}
\|{}_a D^\alpha_t u\|^p_{L^p(J)}
\le C\,[u]^p_{W^{\alpha,p}_\Delta(J)},
\end{equation}
where $C=C(a,b,\alpha,p)>0$; the analogous bound for the right-sided
derivative then follows by the reflection $t\mapsto a+b-t$ on $[a,b]$.

Since $\alpha p>1$ and $u\in W^{\alpha,p}_\Delta(J)$ with $u(a)=0$,
the function $u$ admits the Riemann--Liouville integral representation
\begin{equation}\label{eq:RL-rep}
u(t)
= \frac{1}{\Gamma(\alpha)}
  \int_a^t (t-\tau)^{\alpha-1}\,({}_aD^\alpha_\tau u)(\tau)\,d\tau
= {}_aI^\alpha_t\bigl({}_aD^\alpha_t u\bigr)(t),
\qquad t\in[a,b];
\end{equation}
see \cite[Chapter~2, \S13, Theorem~13.1]{SamkoKilbasMarichev1993}.
The condition $u(a)=0$ ensures that no boundary term appears in this
representation. An analogous identity holds for the right-sided
derivative using $u(b)=0$.

We now derive~\eqref{eq:RL-lower-left} from the fractional Hardy
inequality. By~\cite[Theorem~1]{Dyda2004} (see also
\cite[Theorem~1.1]{FrankSeiringer2008} for the sharp constant on
$\R^N$), there exists $C_H=C_H(a,b,\alpha,p)>0$ such that for every
$v\in W^{\alpha,p}(a,b)$ with $v(a)=0$,
\begin{equation}\label{eq:hardy-endpoint}
\int_a^b \frac{|v(t)|^p}{(t-a)^{\alpha p}}\,dt
\le C_H\iint_{[a,b]^2}
\frac{|v(t)-v(s)|^p}{|t-s|^{1+\alpha p}}\,dt\,ds
= C_H\,[v]^p_{W^{\alpha,p}(a,b)}.
\end{equation}
(Here $\alpha p>1$ ensures that the weight $(t-a)^{-\alpha p}$ is
locally integrable at $a$ only after the condition $v(a)=0$ is
imposed, which is exactly the content of the Hardy-type estimate.)

Using the Hardy--Littlewood--Sobolev boundedness of the Riesz
potential $I^\alpha_a$ from $L^p(a,b)$ to itself for
$\alpha\in(0,1)$ (see \cite[Chapter~1, \S5]{SamkoKilbasMarichev1993}),
and writing $g:={}_aD^\alpha_t u\in L^p(J)$, the
representation~\eqref{eq:RL-rep} gives
\[
|u(t)| = |{}_aI^\alpha_t g(t)|
\le \frac{1}{\Gamma(\alpha)}\int_a^t (t-\tau)^{\alpha-1}|g(\tau)|\,d\tau.
\]
By Young's convolution inequality applied to the Abel kernel
$(t-\tau)^{\alpha-1}$ on $[a,b]$,
\[
\|u\|_{L^p(J)}\le \frac{(b-a)^{\alpha}}{\Gamma(\alpha+1)}\,\|g\|_{L^p(J)},
\]
so $g={}_aD^\alpha_t u\in L^p(J)$ controls $u$ in $L^p(J)$.

To obtain the reverse estimate, we apply the endpoint Hardy
inequality~\eqref{eq:hardy-endpoint} with $v=u$. Since $u(a)=0$
and $u=I^\alpha_a g$ with $g={}_aD^\alpha_t u$, the
Minkowski inequality for integrals gives
\[
\int_a^b \frac{|u(t)|^p}{(t-a)^{\alpha p}}\,dt
\ge c\,\int_a^b |g(t)|^p\,dt
= c\,\|{}_aD^\alpha_t u\|^p_{L^p(J)},
\]
for some $c=c(a,b,\alpha,p)>0$; this follows from the lower bound
on the left-sided fractional integral established in
\cite[Lemma~2.1]{Dyda2004} for functions supported away from $b$.
More precisely, Dyda establishes, for
$u = {}_aI^\alpha_t g$ with $g \in L^p(a,b)$ and $u(a)=0$, the
pointwise lower bound
\[
|u(t)| \ge \frac{1}{\Gamma(\alpha+1)}\,(t-a)^\alpha\,
\Bigl(\frac{1}{t-a}\int_a^t |g(\tau)|\,d\tau\Bigr),
\qquad t\in(a,b).
\]
Although Dyda states this on $\mathbb{R}$, the proof is purely local
near $a$ and applies without modification on the bounded interval
$[a,b]$, since $u$ and $g$ are supported in $[a,b]$ with $u(a)=0$.
Raising to the power $p$, integrating over $(a,b)$, and applying
the Hardy--Littlewood maximal inequality then gives the claimed
lower bound with $c = c(a,b,\alpha,p) > 0$.
Combining with~\eqref{eq:hardy-endpoint},
\[
\|{}_aD^\alpha_t u\|^p_{L^p(J)}
\le C\,\int_a^b\frac{|u(t)|^p}{(t-a)^{\alpha p}}\,dt
\le C\,C_H\,[u]^p_{W^{\alpha,p}(J)},
\]
which is~\eqref{eq:RL-lower-left} with constant
$C(a,b,\alpha,p):=C\cdot C_H$.
The set equality $W^{\alpha,p}_{\Delta,0}(J)=X^{\alpha,p}_{\Delta,0}(J)$
and the equivalence of norms in~\eqref{eq:RL-norm-equivalence}
now follow by combining the upper and lower bounds.
\end{proof}

\begin{remark}\label{rem:subcritical-open}
The supercritical assumption $\alpha p > 1$ is used twice: once to
define pointwise traces via the Morrey embedding
$W^{\alpha,p}(a,b)\hookrightarrow C^{0,\alpha-1/p}([a,b])$
(see~\cite[Theorem~8.2]{DiNezzaPalatucciValdinoci2012}), and once
to apply the endpoint Hardy inequality and the Riemann--Liouville
integral representation with $u(a)=u(b)=0$
(see~\cite[Theorem~13.1]{SamkoKilbasMarichev1993}
and~\cite[Theorem~1]{Dyda2004}). The subcritical case $\alpha p < 1$
requires either a trace theory on $W^{\alpha,p}_\Delta(J)$, or an
alternative formulation of $W^{\alpha,p}_{\Delta,0}(J)$ as the
closure of $C^\infty_c(a,b)$ in $W^{\alpha,p}_\Delta(J)$.
We do not pursue this extension here.
\end{remark}

\begin{corollary}\label{cor:RL-equivalence-norms}
Under the assumptions of Theorem~\ref{thm:RL-equivalence}, the spaces
$W^{\alpha,p}_{\Delta,0}(J)$ and $X^{\alpha,p}_{\Delta,0}(J)$ have the
same topology, the same Banach space structure (and, for $p = 2$, the
same Hilbert space structure up to equivalent inner products), and the
same topological dual.
\end{corollary}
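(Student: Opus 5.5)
The corollary is a formal consequence of \cref{thm:RL-equivalence}, and I would derive it in three short steps.

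\textbf{Step 1: topology.} By \cref{thm:RL-equivalence}, $W^{\alpha,p}_{\Delta,0}(J)=X^{\alpha,p}_{\Delta,0}(J)$ as sets. For every $u$ in this common set, \eqref{eq:RL-norm-equivalence} holds. Adding $\|u\|_{L^p(J)}$ to each side gives
\[
\min(1,c_1)\,\|u\|_{X^{\alpha,p}_\Delta(J)}\le \|u\|_{W^{\alpha,p}_\Delta(J)}\le \max(1,c_2)\,\|u\|_{X^{\alpha,p}_\Delta(J)}.
\]
So the identity map is a linear homeomorphism between the two normed spaces. They therefore carry the same topology and the same Cauchy sequences.

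\textbf{Step 2: Banach structure.} Next I would show that $W^{\alpha,p}_{\Delta,0}(J)$ is complete. It is a subspace of the Banach space $W^{\alpha,p}_\Delta(J)$ (\cref{thm:banach}), so it suffices to show it is closed. Since $\alpha p>1$, the embedding $W^{\alpha,p}(a,b)\hookrightarrow C^{0,\alpha-1/p}([a,b])$ cited before \eqref{eq:W0-def} is continuous. Hence the point evaluations $u\mapsto u(a)$ and $u\mapsto u(b)$ are continuous linear functionals, and $W^{\alpha,p}_{\Delta,0}(J)$ is the intersection of their kernels, which is closed. Completeness transfers to $X^{\alpha,p}_{\Delta,0}(J)$ through the equivalent norms, because the two norms have the same Cauchy sequences. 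The same reasoning transfers reflexivity, using \cref{thm:reflexive} together with the fact that closed subspaces of reflexive spaces are reflexive.

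\textbf{Step 3: Hilbert case and duals.} For $p=2$, the restriction of the inner product \eqref{eq:hilbert-inner-product} makes $W^{\alpha,2}_{\Delta,0}(J)$ a Hilbert space, by \cref{rem:hilbert} and closedness. Its norm is equivalent to $\|\cdot\|_{X^{\alpha,2}_\Delta(J)}$. The Hilbert structures therefore agree up to equivalent inner products. If $X$ is to be given its own inner product, I would take $\langle u,v\rangle_{L^2}+\langle {}_aD^\alpha_t u,{}_aD^\alpha_t v\rangle+\langle {}_tD^\alpha_b u,{}_tD^\alpha_b v\rangle$, whose norm is equivalent to the sum norm. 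For the duals: a linear functional is continuous for one norm if and only if it is continuous for any equivalent norm. Hence the topological duals coincide as sets, and their dual norms are equivalent, with constants given by the reciprocals of those above.

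\textbf{Main obstacle.} There is essentially no analytic obstacle once \cref{thm:RL-equivalence} is granted. The only point needing care is the closedness of the trace-zero subspace in Step 2, which rests on the continuity of point evaluations supplied by the Morrey-type embedding.
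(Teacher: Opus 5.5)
Your argument is correct and follows the paper's intended route. The paper gives no separate proof and treats the corollary as an immediate consequence of the set equality and the two-sided estimate \eqref{eq:RL-norm-equivalence} in Theorem~\ref{thm:RL-equivalence}, exactly as in your Steps~1 and~3; your Step~2 adds a point the paper leaves implicit, namely that the trace-zero subspace is closed in $W^{\alpha,p}_\Delta(J)$ (by continuity of point evaluations under the Morrey embedding), so there is a Banach structure to transfer in the first place.
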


\subsection{Hybrid obstruction: an explicit quantitative gap}\label{subsec:hybrid-obstruction}

Theorem~\ref{thm:RL-equivalence} characterizes the regime in which the
Gagliardo and bilateral RL viewpoints agree. We now show that the
equivalence cannot extend to hybrid time scales, even in the
simplest possible setting of one interval component plus a single
isolated point. The obstruction is sharp and quantitative: it
identifies a one-parameter family of functions along which the
Gagliardo seminorm grows without bound while every RL-type norm built
from the derivatives on the interval component vanishes identically.

\begin{theorem}[Hybrid obstruction to Riemann--Liouville equivalence]\label{thm:hybrid-obstruction}
Let $\alpha \in (0,1)$, $1 < p < \infty$, and let
\[
\T := [0,1] \cup \{2\}, \qquad m := \mu_\Delta(\{2\}) > 0.
\]
For each $\lambda \in \R$, define $u_\lambda : \T \to \R$ by
\[
u_\lambda(t) :=
\begin{cases}
0 & \text{if } t \in [0,1], \\
\lambda & \text{if } t = 2.
\end{cases}
\]
Then:
\begin{enumerate}
\item[(i)] $u_\lambda \in W^{\alpha,p}_\Delta(\T)$ and
\[
[u_\lambda]^p_{W^{\alpha,p}_\Delta(\T)}
= 2 \, |\lambda|^p \, m \int_0^1 \frac{dt}{(2 - t)^{1+\alpha p}}
= \frac{2 \, |\lambda|^p \, m}{\alpha p} \bigl( 1 - 2^{-\alpha p} \bigr).
\]
\item[(ii)] The restriction $u_\lambda|_{[0,1]}$ vanishes identically,
so the left and right Riemann--Liouville derivatives of $u_\lambda$ on
the interval component $[0,1]$ satisfy
\[
({}_0 D^\alpha_t \, u_\lambda)(t) = ({}_t D^\alpha_1 \, u_\lambda)(t) = 0
\qquad \text{for $\Delta$-a.e.\ } t \in [0,1].
\]
\item[(iii)] Consequently, no constant $C > 0$ can satisfy
\[
[u_\lambda]_{W^{\alpha,p}_\Delta(\T)}
\;\le\; C \bigl( \|{}_0 D^\alpha_t u_\lambda\|_{L^p_\Delta([0,1])}
              + \|{}_t D^\alpha_1 u_\lambda\|_{L^p_\Delta([0,1])} \bigr)
\]
uniformly in $\lambda \in \R$.
\end{enumerate}
\end{theorem}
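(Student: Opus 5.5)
The plan is to compute the Gagliardo seminorm of $u_\lambda$ exactly, by splitting the off-diagonal domain $\Omega_\T$ along the connected components of $\T=[0,1]\cup\{2\}$, in the spirit of Example~\ref{ex:hybrid}. Write
\[
\Omega_\T=\bigl(([0,1]\times[0,1])\cap\Omega_\T\bigr)\cup\bigl([0,1]\times\{2\}\bigr)\cup\bigl(\{2\}\times[0,1]\bigr),
\]
and note that the point $(2,2)$ is excluded because it lies on the diagonal. The integrand is nonnegative and measurable by Lemma~\ref{lem:kernel-wellposed}, so Tonelli's theorem lets me evaluate each piece as an iterated integral.

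On $[0,1]^2$ the function $u_\lambda$ vanishes identically, so this contribution is zero. On $[0,1]\times\{2\}$ the product measure is $dt\otimes m\,\delta_2$, since $\mu_\Delta$ restricted to $[0,1]$ is Lebesgue measure by \cite[Theorem~5.2]{CabadaVivero2006}. This piece therefore equals
\[
|\lambda|^p\, m\int_0^1 (2-t)^{-(1+\alpha p)}\,dt .
\]
By the symmetry in Proposition~\ref{prop:seminorm-structural-properties}(ii), the piece $\{2\}\times[0,1]$ gives the same value, which produces the factor $2$. The elementary antiderivative is $(2-t)^{-\alpha p}/(\alpha p)$, and evaluating it between $0$ and $1$ yields $(1-2^{-\alpha p})/(\alpha p)$. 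This gives the formula in (i). Membership $u_\lambda\in L^p_\Delta(\T)$ is immediate, since $\|u_\lambda\|^p_{L^p_\Delta}=|\lambda|^p m<\infty$. Together with the finite seminorm this gives $u_\lambda\in W^{\alpha,p}_\Delta(\T)$.

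For (ii), the Riemann--Liouville derivatives on the interval component are defined from the restriction $u_\lambda|_{[0,1]}\equiv 0$. Concretely, ${}_0D^\alpha_t u=\frac{d}{dt}\,{}_0I^{1-\alpha}_t u$, and the fractional integral of the zero function is zero. The same holds on the right side. Hence both derivatives vanish identically on $[0,1]$, and in particular $\Delta$-a.e. For (iii), fix any $\lambda\neq 0$. The left-hand side of the proposed inequality is strictly positive by (i), provided $m>0$. The right-hand side is $0$ by (ii), so no $C>0$ can work. Moreover, the left-hand side grows linearly in $|\lambda|$ while the right-hand side stays zero, which is the quantitative form of the gap.

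I expect no step to be a genuine analytic obstacle; the point needing care is the hypothesis $m>0$. Under the standard convention the maximal point $2$ has $\sigma(2)=2$, hence $\mu_\Delta(\{2\})=0$, as observed in Remark~\ref{rem:nontriv-masses}. I would therefore state explicitly that the positive mass $m$ is taken as a hypothesis, as in the statement. Alternatively, the time scale can be extended to the right by a further point so that $2$ becomes right-scattered. That extension changes only the constant in (i), not the conclusion (iii). The only other point to check is that the diagonal point $(2,2)$, which carries mass $m^2>0$, is correctly excluded from $\Omega_\T$. Its integrand would in any case be of the form $0/0$, so excluding it is exactly what makes the computation well defined.
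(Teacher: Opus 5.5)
Your argument is the paper's own proof: the same splitting of $\Omega_\T$ into the three component pairs, the factor $2$ from symmetry, the elementary antiderivative, the observation that the Riemann--Liouville derivatives see only $u_\lambda|_{[0,1]}\equiv 0$, and positive-versus-zero for (iii). Your caveat about $m$ is correct and goes beyond the paper. The paper simply postulates $m>0$, even though its own Remark~\ref{rem:nontriv-masses} gives $\mu_\Delta(\{2\})=0$ for the maximal point.

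The caveat stops one step short, however, and the step it misses is one your computation uses. In $\T=[0,1]\cup\{2\}$ the point $1$ is right-scattered with $\sigma(1)=2$. Hence $\mu_\Delta(\{1\})=\sigma(1)-1=1$, and $\mu_\Delta$ agrees with Lebesgue measure only on $[0,1)$, not on $[0,1]$.

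Your claim that the product measure on $[0,1]\times\{2\}$ is $dt\otimes m\,\delta_2$ is therefore false for the Lebesgue $\Delta$-measure. This holds on the original $\T$: under the paper's convention, $\mu_\Delta$ there is Lebesgue measure on $[0,1)$ plus a unit atom at $1$, so $u_\lambda=0$ $\mu_\Delta$-a.e. It also holds on your right-extended scale, e.g.\ $[0,1]\cup\{2\}\cup\{2+m\}$, where $\mu_\Delta(\{2\})=m$ but still $\mu_\Delta(\{1\})=1$. On that scale the pair $(1,2)$ and its transpose contribute an extra $2|\lambda|^p m$, so
\[
[u_\lambda]^p_{W^{\alpha,p}_\Delta(\T)}=2|\lambda|^p m\Bigl(\frac{1-2^{-\alpha p}}{\alpha p}+1\Bigr).
\]
Your remark that the extension ``changes only the constant'' is right. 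But the constant you actually compute is not the one your construction produces. The exact formula in (i) holds only for the ad hoc measure $dt+m\,\delta_2$ on $[0,1]\cup\{2\}$, and that measure is not $\mu_\Delta$.

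Membership in $W^{\alpha,p}_\Delta$, part (ii), and part (iii) survive unchanged, since they need only finiteness, positivity, and linear growth in $|\lambda|$. The flaw in the exact value is inherited from the paper's proof, which makes the same identification of $\mu_\Delta|_{[0,1]}$ with Lebesgue measure.
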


\begin{proof}
\emph{Step 1: Computation of the Gagliardo seminorm.} Decomposing
$\Omega_\T$ along the three pairs of component types (continuous--continuous,
continuous--discrete, discrete--continuous), and using that the
continuous--continuous contribution vanishes because $u_\lambda$ is
identically zero on $[0,1]$, we obtain
\[
[u_\lambda]^p_{W^{\alpha,p}_\Delta(\T)}
= 2 \iint_{[0,1] \times \{2\}}
    \frac{|u_\lambda(t) - u_\lambda(2)|^p}{|t - 2|^{1+\alpha p}}
    \, d(\mu_\Delta \otimes \mu_\Delta)(t, s)
= 2 \, |\lambda|^p \, m \int_0^1 \frac{dt}{(2 - t)^{1+\alpha p}}.
\]
The remaining integral is elementary:
\[
\int_0^1 \frac{dt}{(2 - t)^{1+\alpha p}}
= \left[ \frac{(2 - t)^{-\alpha p}}{\alpha p} \right]_0^1
= \frac{1 - 2^{-\alpha p}}{\alpha p}.
\]
This proves part (i). Since the right-hand side is finite, we have
$u_\lambda \in W^{\alpha,p}_\Delta(\T)$; for $\lambda \neq 0$ it is
strictly positive.

\emph{Step 2: Vanishing of the Riemann--Liouville derivatives.}
The Hu--Li weak derivatives ${}_0D^\alpha_t$ and ${}_tD^\alpha_1$
are defined via the test-function formulation on the interval
component $J=[0,1]_\T$
(see~\cite{HuLi2022Right,HuLi2022Left}): they act only on the
restriction $u_\lambda|_{[0,1]}$, since the test functions
$\varphi\in C^\infty_{c,\mathrm{rd}}(J,\R^N)$ are compactly
supported in $(0,1)$ and the weak formulation integrates against
$u_\lambda$ solely over $[0,1]$. Consequently, the value
$u_\lambda(2)=\lambda$ is invisible to both derivatives by the
very structure of the operator, not merely by the vanishing of
the integrand. The explicit computation confirms this:
\[
({}_0 D^\alpha_t u_\lambda)(t)
= \frac{1}{\Gamma(1-\alpha)} \frac{d}{dt}
    \int_0^t (t - \tau)^{-\alpha} \, u_\lambda(\tau) \, d\tau,
\]
and analogously for ${}_tD^\alpha_1$. Since $u_\lambda|_{[0,1]}\equiv 0$,
both integrands are identically zero, so
\[
({}_0 D^\alpha_t \, u_\lambda)(t)
= ({}_t D^\alpha_1 \, u_\lambda)(t) = 0
\qquad\text{for $\Delta$-a.e.\ }t\in[0,1].
\]
This proves part~(ii).

\emph{Step 3: The obstruction.} From parts (i) and (ii), the right-hand
side of the inequality in part (iii) is zero for every $\lambda \in \R$,
while the left-hand side equals
\[
\bigl( \tfrac{2 m}{\alpha p}(1 - 2^{-\alpha p}) \bigr)^{1/p} \, |\lambda|,
\]
which tends to infinity as $|\lambda| \to \infty$. No finite constant
$C$ can therefore validate the inequality uniformly. This proves
part (iii).
\end{proof}

\begin{remark}\label{rem:obstruction-interpretation}
Theorem~\ref{thm:hybrid-obstruction} has a transparent interpretation:
the bilateral Riemann--Liouville norm on the interval component
$[0,1]$ is blind to the value of $u$ at the isolated point $\{2\}$,
because the Riemann--Liouville derivative is an integral operator
that only sees the restriction of $u$ to the interval. By contrast,
the Gagliardo seminorm explicitly contains the mixed
continuous--discrete interaction term
\[
\int_0^1 \frac{|u(t) - u(2)|^p}{|t - 2|^{1+\alpha p}} \, dt \cdot \mu_\Delta(\{2\}),
\]
which carries genuine information about $u(2)$. This is a concrete
instance of the general principle that operator-based fractional
Sobolev theories on time scales cannot capture the cross-component
interactions encoded by the off-diagonal Gagliardo seminorm.
\end{remark}

\begin{remark}\label{rem:obstruction-general}
The argument of Theorem~\ref{thm:hybrid-obstruction} extends without
modification to any time scale of the form $\T = [a, b] \cup D$, where
$D$ is a nonempty finite subset of $\R \setminus [a, b]$ with positive
$\Delta$-mass at each point. More generally, on a hybrid time scale
satisfying Assumption~\ref{ass:hybrid-class}, the mixed
continuous--discrete contribution to the Gagliardo seminorm,
\[
\sum_{\ell=1}^{m} \sum_{j=1}^{N}
  \mu_\Delta(\{d_j\}) \int_{I_\ell}
    \frac{|u(t) - u(d_j)|^p}{|t - d_j|^{1+\alpha p}} \, d\mu_\Delta(t),
\]
is a positive homogeneous form of degree $p$ (quadratic when $p=2$)
that has no analogue in any sum of one-sided or bilateral
Riemann-Liouville norms over the interval components. This is the structural reason why no global equivalence
between the Gagliardo seminorm and a derivative-based norm can hold on
hybrid time scales.
\end{remark}

\subsection{Positioning with respect to the weighted framework of Tan, Zhou, and Wang}\label{subsec:TZW}

The recent paper of Tan, Zhou, and Wang~\cite{TanZhouWang2025}
introduces a more general weighted Riemann--Liouville framework on
time scales. We briefly indicate how the comparison results of this
section transfer to that setting.

\medskip

\textbf{The TZW framework.} Given $\gamma \in (0,1]$, a strictly
increasing $\Delta$-differentiable function
$\zeta : [a,b]_\T \to \R$, and a non-vanishing weight
$\kappa : [a,b]_\T \to \R$, Tan, Zhou and Wang introduce the
weighted fractional integral
\[
{}_a I_t^{\gamma,\zeta,\kappa} f(t)
:= \kappa^{-1}(t) \int_a^t
   \frac{(\zeta(t)-\zeta(\sigma(s)))^{\gamma-1}}{\Gamma(\gamma)}\,
   \kappa(s)\, \zeta^\Delta(s)\, f(s)\, \Delta s,
\]
together with the corresponding left and right weighted Riemann--Liouville
derivatives ${}_a D_t^{\gamma,\zeta,\kappa}$ and
${}_t D_b^{\gamma,\zeta,\kappa}$ (see~\cite[Definitions~5--6]{TanZhouWang2025}).
The associated weighted Sobolev space
$W^{\gamma,\zeta,\kappa,p}_{\Delta,a+}$ is defined by the weak
formulation $u \in X^{\kappa,p}_\Delta$,
${}_a D_t^{\gamma,\zeta,\kappa} u \in X^{\kappa,p}_\Delta$, where
$X^{\kappa,p}_\Delta := \{ f : \|\kappa f\|_{L^p_\Delta(\zeta^\Delta\, d\mu_\Delta)} < \infty\}$;
it is a Banach space, reflexive and separable
(\cite[Theorems~14--16]{TanZhouWang2025}). Their framework recovers
Hu--Li when $\kappa \equiv 1$, $\zeta(t) = t$, and includes Hadamard
and Erdélyi--Kober-type fractional Sobolev spaces as further special
cases.

\medskip

\textbf{Positioning.} Two observations clarify how the present
Gagliardo-type framework relates to TZW.

\begin{enumerate}
\item[(i)] In the unweighted classical case
$\kappa \equiv 1$, $\zeta(t) = t$, the TZW operator
${}_a D_t^{\gamma,\zeta,\kappa}$ reduces to the standard left
Riemann--Liouville derivative of Hu--Li~\cite{HuLi2022Right,HuLi2022Left}.
In this case, Theorem~\ref{thm:RL-equivalence} applies and provides a
norm equivalence between the Gagliardo seminorm and the bilateral
RL norm on the boundary-zero subspace of a continuous interval, in
the supercritical regime $\alpha p > 1$.

\item[(ii)] For general non-trivial weight $\kappa$ or
non-affine $\zeta$, the obstructions of
Propositions~\ref{prop:no-one-sided-comparison}
and Theorem~\ref{thm:hybrid-obstruction} carry over without
modification. Indeed, the TZW framework remains operator-based and
one-sided: constants on $[a,b]$ have nontrivial weighted derivative
\[
{}_a D_t^{\gamma,\zeta,\kappa}(1)(t)
= \frac{\kappa^{-1}(t)\,(\zeta(t)-\zeta(a))^{-\gamma}}{\Gamma(1-\gamma)},
\]
which is generically nonzero, while the Gagliardo seminorm is
translation-invariant. On a hybrid time scale
$\T = [a,b] \cup \{d\}$, the TZW derivative on $[a,b]$ remains blind
to the value $u(d)$, by the same integral structure used in the
proof of Theorem~\ref{thm:hybrid-obstruction}. Consequently, the
quantitative obstruction holds verbatim with
${}_0 D_t^\alpha$ replaced by
${}_0 D_t^{\gamma,\zeta,\kappa}$.
\end{enumerate}

\medskip

\textbf{What this comparison says.} The TZW weighted framework
provides a richer family of operator-based fractional Sobolev spaces
on time scales --- richer than Hu--Li by the choice of $(\zeta,\kappa)$ ---
but its conceptual nature is unchanged: it is local in the derivative,
one-sided, and operator-based. The Gagliardo-type seminorm
of~\eqref{eq:gagliardo-seminorm} sits in a structurally different class:
nonlocal, kernel-based, and symmetric. The same dichotomy as in
Theorems~\ref{thm:RL-equivalence}--\ref{thm:hybrid-obstruction} applies:
on continuous intervals in the supercritical regime, the two
viewpoints are equivalent on the boundary-zero subspace; on hybrid
time scales, the Gagliardo seminorm captures interaction terms that
no TZW-type derivative can detect.

A finer quantitative comparison between specific subspaces of the
two frameworks --- for instance, the relation between
the closure of $C^\infty_c(a,b)$ in $W^{\alpha,p}_\Delta(J)$ and
the weighted boundary-zero space of TZW for specific
$(\zeta,\kappa)$ --- is a natural object for further work.

\subsection{Summary of the comparison}\label{subsec:RL-summary}

Together with Proposition~\ref{prop:no-one-sided-comparison} and
Proposition~\ref{prop:inclusion}, Theorems~\ref{thm:RL-equivalence}
and~\ref{thm:hybrid-obstruction} delimit precisely the regime in which
the Gagliardo and Riemann--Liouville viewpoints coincide on time
scales:
\begin{itemize}[leftmargin=2em]
\item On a continuous interval, with vanishing boundary trace and in
the supercritical regime $\alpha p > 1$, the Gagliardo norm is
equivalent to the bilateral Riemann--Liouville norm
(Theorem~\ref{thm:RL-equivalence}).

\item On a hybrid time scale containing at least one isolated point
with positive $\Delta$-mass, even the bilateral Riemann--Liouville
norm cannot control the Gagliardo seminorm, due to the explicit
contribution of the mixed continuous--discrete interactions
(Theorem~\ref{thm:hybrid-obstruction}).

\item The subcritical case $\alpha p < 1$ on continuous intervals
remains open, since it requires either a trace theory or a
Slobodeckij-density definition of the boundary-zero subspace
(Remark~\ref{rem:subcritical-open}).
\end{itemize}
These results reinforce the view that the Gagliardo-type framework on
time scales is genuinely complementary to the derivative-based one:
the two coincide on a precisely delimited subregime and diverge as
soon as the time scale acquires a hybrid structure, where the
off-diagonal interaction kernel carries information that the
differential operators do not see.


\section{Geometric estimates}
\label{sec:geometry}

We now focus on the geometric part of the theory. Up to this point,
the spaces $W^{\alpha,p}_\Delta(\T)$ were studied on arbitrary time scales,
without assumptions on the structure of the connected components. In this section
we restrict attention to bounded hybrid time scales and prove a Poincar\'e-type
inequality, a Sobolev embedding, and weighted Hardy- and
Caffarelli--Kohn--Nirenberg-type inequalities in the subcritical weighted regime.

\subsection{A Poincar\'e-type inequality on bounded hybrid time scales}\label{subsec:poincare}

We impose a structural assumption on the underlying time scale ensuring that
its connected components are finitely many, mutually separated, and of
positive $\Delta$-measure.

\begin{assumption}\label{ass:hybrid-class}
The time scale $\T$ is bounded and has finitely many connected components.
More precisely,
\[
\T=\Big(\bigcup_{\ell=1}^m I_\ell\Big)\cup D,
\]
where:
\begin{itemize}
\item each $I_\ell=[a_\ell,b_\ell]$ is a nondegenerate compact interval,
\item $D=\{d_1,\dots,d_N\}$ is a finite set,
\item the sets $I_1,\dots,I_m,\{d_1\},\dots,\{d_N\}$ are pairwise disjoint,
\item there exists $\delta_0>0$ such that any two distinct connected
      components of $\T$ are at distance at least $\delta_0$.
\end{itemize}
In addition, for every singleton component $\{d_j\}$, we assume
$\mu_\Delta(\{d_j\})>0$.
\end{assumption}

Under \cref{ass:hybrid-class}, we write
$\mathcal C=\{I_1,\dots,I_m,\{d_1\},\dots,\{d_N\}\}$ for the family of
connected components. One has $0<\mu_\Delta(C)<\infty$ for every
$C\in\mathcal C$ and $\mu_\Delta(\T)<\infty$. For $u\in L_\Delta^1(\T)$,
we define the global average
\[
u_{\T}:=\frac1{\mu_\Delta(\T)}\int_{\T} u(t)\,d\mu_\Delta(t),
\]
and, for each component $C\in\mathcal C$,
\[
u_C:=\frac1{\mu_\Delta(C)}\int_C u(t)\,d\mu_\Delta(t).
\]

The Gagliardo seminorm decomposes into intra-component and inter-component
contributions:
\begin{equation}\label{eq:component-splitting}
\begin{aligned}
[u]^p_{W^{\alpha,p}_\Delta(\T)}
&= \sum_{C\in\mathcal C}
\iint_{(C\times C)\cap\Omega_\T}
\frac{|u(t)-u(s)|^p}{|t-s|^{1+\alpha p}}
\,d(\mu_\Delta\otimes\mu_\Delta)(t,s)\\
&\quad+
\sum_{\substack{C,C'\in\mathcal C\\C\neq C'}}
\iint_{C\times C'}
\frac{|u(t)-u(s)|^p}{|t-s|^{1+\alpha p}}
\,d(\mu_\Delta\otimes\mu_\Delta)(t,s).
\end{aligned}
\end{equation}

\begin{lemma}\label{lem:cross-component-bounds}
Under \cref{ass:hybrid-class}, for every two distinct components
$C,C'\in\mathcal C$ and every $(t,s)\in C\times C'$,
\[
\frac{1}{(\operatorname{diam}\T)^{1+\alpha p}}
\le
\frac{1}{|t-s|^{1+\alpha p}}
\le
\frac{1}{\delta_0^{1+\alpha p}}.
\]
In particular, there exist constants $c_0,C_0>0$, depending only on $\T$,
$\alpha$, and $p$, such that
\begin{equation}\label{eq:cross-bounds}
c_0\iint_{C\times C'} |u(t)-u(s)|^p\,d(\mu_\Delta\otimes\mu_\Delta)
\le
\iint_{C\times C'}
\frac{|u(t)-u(s)|^p}{|t-s|^{1+\alpha p}}
\,d(\mu_\Delta\otimes\mu_\Delta)
\le
C_0\iint_{C\times C'} |u(t)-u(s)|^p\,d(\mu_\Delta\otimes\mu_\Delta).
\end{equation}
\end{lemma}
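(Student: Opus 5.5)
The plan is to bound $|t-s|$ above and below pointwise for $(t,s)\in C\times C'$, and then integrate these pointwise kernel bounds against the nonnegative function $|u(t)-u(s)|^p$.

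\emph{Lower bound on the distance.} Fix distinct components $C,C'\in\mathcal C$ and $(t,s)\in C\times C'$. By \cref{ass:hybrid-class}, the distance between $C$ and $C'$ is at least $\delta_0$, so $|t-s|\ge\operatorname{dist}(C,C')\ge\delta_0>0$. In particular $t\neq s$, hence $C\times C'\subset\Omega_\T$ and the kernel is finite there.

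\emph{Upper bound on the distance.} Since $t,s\in\T$ and $\T$ is bounded, $|t-s|\le\operatorname{diam}\T<\infty$. The diameter is positive, because $\T$ has at least the two points $t\neq s$.

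\emph{Pointwise kernel bounds.} The exponent $1+\alpha p$ is positive and the map $x\mapsto x^{-(1+\alpha p)}$ is decreasing on $(0,\infty)$. Applying it to $\delta_0\le|t-s|\le\operatorname{diam}\T$ gives the two-sided pointwise inequality in the statement.

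\emph{Integrated bounds.} Multiply the pointwise inequality by $|u(t)-u(s)|^p\ge0$. This function is measurable by \cref{lem:kernel-wellposed}. Integrate over $C\times C'$ with respect to $\mu_\Delta\otimes\mu_\Delta$, using monotonicity of the integral for nonnegative functions. Infinite values are allowed, so no integrability assumption on $u$ is needed. This yields \eqref{eq:cross-bounds} with
\[
c_0:=(\operatorname{diam}\T)^{-(1+\alpha p)}, \qquad C_0:=\delta_0^{-(1+\alpha p)}.
\]
Both constants depend only on $\T$, $\alpha$ and $p$.

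There is no real obstacle in this argument. The only point needing care is to note that $C\times C'$ avoids the diagonal, so the integrals over $C\times C'$ coincide with the corresponding pieces of the seminorm over $\Omega_\T$ in \eqref{eq:component-splitting}.
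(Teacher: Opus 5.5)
Your proof is correct and follows essentially the same argument as the paper: the pointwise bounds $\delta_0\le|t-s|\le\operatorname{diam}\T$ on $C\times C'$, followed by multiplying by $|u(t)-u(s)|^p$ and integrating. Your explicit constants $c_0=(\operatorname{diam}\T)^{-(1+\alpha p)}$ and $C_0=\delta_0^{-(1+\alpha p)}$, and your remark that $C\times C'\subset\Omega_\T$, only make explicit what the paper leaves implicit.
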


\begin{proof}
Since $C$ and $C'$ are distinct components, $|t-s|\ge\delta_0$ for all
$(t,s)\in C\times C'$, and $|t-s|\le\operatorname{diam}\T$ since $\T$ is
bounded. The bounds \eqref{eq:cross-bounds} follow by multiplying by
$|u(t)-u(s)|^p$ and integrating.
\end{proof}

\begin{lemma}[Weighted discrete Poincar\'e inequality on the component graph]\label{lem:discrete-poincare}
Let $\mathcal C$ be the finite family of connected components from \cref{ass:hybrid-class}, and set $\lambda_C:=\mu_\Delta(C)$ for $C\in\mathcal C$. Then there exists $C_{\mathrm d}>0$, depending only on the weights $(\lambda_C)_{C\in\mathcal C}$, such that for every family $(x_C)_{C\in\mathcal C}\subset\mathbb R$,
\begin{equation}\label{eq:discrete-poincare}
\sum_{C\in\mathcal C}\lambda_C\left|x_C-\frac{\sum_{C'\in\mathcal C}
\lambda_{C'}x_{C'}}{\sum_{C'\in\mathcal C}\lambda_{C'}}\right|^p
\le
C_{\mathrm d}\sum_{\substack{C,C'\in\mathcal C\\ C\neq C'}}
\lambda_C\lambda_{C'}|x_C-x_{C'}|^p.
\end{equation}
\end{lemma}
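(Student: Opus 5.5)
We will prove \eqref{eq:discrete-poincare} by a direct convexity argument, which even yields an explicit constant. Write $\Lambda:=\sum_{C'\in\mathcal C}\lambda_{C'}$ and $\bar x:=\Lambda^{-1}\sum_{C'}\lambda_{C'}x_{C'}$.

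\textbf{Step 1: express each deviation as a weighted average.} For every $C\in\mathcal C$,
\[
x_C-\bar x=\sum_{C'\in\mathcal C}\frac{\lambda_{C'}}{\Lambda}\,(x_C-x_{C'}).
\]
The terms with $C'=C$ vanish, so only pairs with $C'\neq C$ contribute.

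\textbf{Step 2: apply Jensen's inequality.} The weights $\lambda_{C'}/\Lambda$ form a probability vector, and $|\cdot|^p$ is convex for $p\ge1$. Hence
\[
|x_C-\bar x|^p\le \sum_{C'\in\mathcal C}\frac{\lambda_{C'}}{\Lambda}\,|x_C-x_{C'}|^p
=\frac1\Lambda\sum_{C'\neq C}\lambda_{C'}|x_C-x_{C'}|^p .
\]

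\textbf{Step 3: sum over components.} Multiplying by $\lambda_C$ and summing over $C$ gives
\[
\sum_{C}\lambda_C|x_C-\bar x|^p\le \frac1\Lambda\sum_{C\neq C'}\lambda_C\lambda_{C'}|x_C-x_{C'}|^p .
\]
This is \eqref{eq:discrete-poincare} with $C_{\mathrm d}=\Lambda^{-1}=\mu_\Delta(\T)^{-1}$, which depends only on the weights.

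All $\lambda_C$ are positive and finite by \cref{ass:hybrid-class}, so $\Lambda\in(0,\infty)$ and $\bar x$ is well defined. If $\mathcal C$ has a single element, both sides of \eqref{eq:discrete-poincare} vanish.

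There is no real obstacle. The only points to check are that $C'=C$ is correctly excluded, which is automatic since those terms are zero, and that Jensen's inequality is legitimate, which holds because the weights sum to one. An alternative route through equivalence of norms on the finite-dimensional quotient $\mathbb R^{\mathcal C}/\mathbb R$ would give only a nonexplicit constant, so the convexity argument is preferable.
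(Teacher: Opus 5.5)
Your proof is correct, but it takes a different route from the paper's.

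\textbf{The paper's route.} After taking $p$-th roots, the paper treats both sides of \eqref{eq:discrete-poincare} as seminorms $N_1,N_2$ on $\mathbb R^{\mathcal C}$. Each has kernel exactly the constant vectors; this is where positivity of every $\lambda_C$, guaranteed by \cref{ass:hybrid-class}, is used. So both descend to norms on the finite-dimensional quotient $\mathbb R^{\mathcal C}/\mathbb R$, and all norms there are equivalent.

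\textbf{Your route.} You write $x_C-\bar x$ as a probability average of the differences $x_C-x_{C'}$ and apply Jensen's inequality to $|\cdot|^p$. This gives the explicit constant $C_{\mathrm d}=\mu_\Delta(\T)^{-1}$.

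\textbf{What your version buys.} Your constant is independent of $p$, of the number of components, and of the individual weights; you only need $\Lambda>0$. The norm-equivalence constant is non-explicit, and it also depends on $p$, which the phrase ``depending only on the weights'' glosses over. This matters downstream. In \cref{thm:embedding} the lemma is invoked with $q$ in place of $p$. Your constant shows that this step adds no dependence on $q$ or on $m_*$, which sharpens the discussion in \cref{rem:embedding-constant-m-star}.

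\textbf{What the paper's version buys.} The soft argument applies verbatim to any pair of seminorms with the same finite-dimensional kernel. For example, it would still work if the right-hand side summed only over the edges of a connected interaction graph. There no one-step averaging identity like yours is available, and a direct proof would have to chain along paths.
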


\begin{proof}
Consider on $\mathbb R^{\mathcal C}$ the two seminorms
\[
N_1(x):=\left(\sum_{C\in\mathcal C}\lambda_C\left|x_C-\frac{\sum_{C'\in\mathcal C}\lambda_{C'}x_{C'}}{\sum_{C'\in\mathcal C}\lambda_{C'}}\right|^p\right)^{1/p}
\]
and
\[
N_2(x):=\left(\sum_{\substack{C,C'\in\mathcal C\\ C\neq C'}}\lambda_C\lambda_{C'}|x_C-x_{C'}|^p\right)^{1/p}.
\]
Both vanish exactly on constant vectors. They therefore induce norms on the finite-dimensional quotient $\mathbb R^{\mathcal C}/\mathbb R$, and all norms on this quotient are equivalent. This yields \eqref{eq:discrete-poincare}.
\end{proof}

\begin{theorem}[Poincar\'e-type inequality]\label{thm:poincare}
Under \cref{ass:hybrid-class}, there exists a constant $C_P>0$, depending
only on $\T$, $\alpha$, and $p$, such that
\begin{equation}\label{eq:poincare-main}
\|u-u_{\T}\|_{L^p_{\Delta}(\T)}
\le
C_P\,[u]_{W^{\alpha,p}_{\Delta}(\T)}
\qquad
\text{for every }u\in W^{\alpha,p}_{\Delta}(\T).
\end{equation}
\end{theorem}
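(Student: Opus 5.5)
The plan is to split $u-u_\T$ into a within-component fluctuation and a between-component fluctuation. We control the first by classical Poincaré inequalities on each interval, and the second by \cref{lem:discrete-poincare} combined with \cref{lem:cross-component-bounds}. For each $C\in\mathcal C$ let $u_C$ be the component average, and write
\[
u-u_\T=(u-u_C)+(u_C-u_\T)\quad\text{on }C,
\]
so that
\[
\|u-u_\T\|^p_{L^p_\Delta(\T)}\le 2^{p-1}\sum_{C\in\mathcal C}\Bigl(\|u-u_C\|^p_{L^p_\Delta(C)}+\lambda_C|u_C-u_\T|^p\Bigr).
\]
Since $\lambda_C>0$ for every component (this is where the assumption $\mu_\Delta(\{d_j\})>0$ is used) and $u_\T=\sum_C\lambda_Cu_C/\sum_C\lambda_C$, the second sum is exactly the left side of \eqref{eq:discrete-poincare} with $x_C=u_C$.

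\textbf{Intra-component step.} On a singleton component, $u-u_C=0$. On an interval $I_\ell$, $\mu_\Delta$ is Lebesgue measure. By Jensen's inequality,
\[
|u(t)-u_{I_\ell}|^p\le |I_\ell|^{-1}\int_{I_\ell}|u(t)-u(s)|^p\,ds ,
\]
and $|t-s|\le|I_\ell|$ gives $1\le |I_\ell|^{1+\alpha p}|t-s|^{-(1+\alpha p)}$. Together these yield
\[
\|u-u_{I_\ell}\|^p_{L^p(I_\ell)}\le |I_\ell|^{\alpha p}\iint_{I_\ell^2}\frac{|u(t)-u(s)|^p}{|t-s|^{1+\alpha p}}\,dt\,ds,
\]
which is bounded by the intra part of \eqref{eq:component-splitting}. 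The diagonal is Lebesgue-null here, so working on $\Omega_\T$ is harmless.

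\textbf{Inter-component step.} By \cref{lem:discrete-poincare} it suffices to bound $\lambda_C\lambda_{C'}|u_C-u_{C'}|^p$ for $C\neq C'$. Jensen's inequality on the product probability measure on $C\times C'$ gives
\[
\lambda_C\lambda_{C'}|u_C-u_{C'}|^p=\lambda_C\lambda_{C'}\Bigl|\frac{1}{\lambda_C\lambda_{C'}}\iint_{C\times C'}(u(t)-u(s))\Bigr|^p\le \iint_{C\times C'}|u(t)-u(s)|^p\,d(\mu_\Delta\otimes\mu_\Delta).
\]
By the lower bound in \eqref{eq:cross-bounds}, the right side is at most $c_0^{-1}$ times the inter-component Gagliardo term. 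Summing over ordered pairs $C\neq C'$ bounds the between-component fluctuation by $C_{\mathrm d}c_0^{-1}$ times the inter part of \eqref{eq:component-splitting}.

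\textbf{Conclusion.} Adding the two steps and using that both the intra and inter parts are dominated by $[u]^p_{W^{\alpha,p}_\Delta(\T)}$ gives \eqref{eq:poincare-main}, with
\[
C_P^p=2^{p-1}\max\Bigl(\max_\ell |I_\ell|^{\alpha p},\,C_{\mathrm d}\,(\operatorname{diam}\T)^{1+\alpha p}\Bigr).
\]
No step looks genuinely hard. The points needing care are to check that $u\in L^p_\Delta\subset L^1_\Delta$ (finite measure), so the averages are defined, and that each Jensen application is valid against normalized measures. The mildly delicate step is the inter-component Jensen bound: the weights $\lambda_C\lambda_{C'}$ must match those in \cref{lem:discrete-poincare}, and they do by construction.
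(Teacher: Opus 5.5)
Your proof is correct, and its skeleton matches the paper's: the splitting $u-u_\T=(u-u_C)+(u_C-u_\T)$, \cref{lem:discrete-poincare} with $x_C=u_C$, Jensen on the normalized product measure on $C\times C'$, and the lower bound in \eqref{eq:cross-bounds}. The one genuine difference is the intra-component step. The paper cites a classical fractional Poincar\'e inequality on intervals, but the result it points to is an embedding with the full $W^{\alpha,p}$ norm on the right, stated for $\alpha p<1$. You instead prove the seminorm bound directly from Jensen and $|t-s|\le|I_\ell|$, with the explicit constant $|I_\ell|^{\alpha p}$ valid for all $\alpha\in(0,1)$ and $p\ge1$. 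Your version is therefore self-contained and gives an explicit $C_P$.

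Your trick proves more than you use it for. Applied to $\T$ itself, and using only that $|u(t)-u(s)|^p$ vanishes on $\diag(\T)$, it gives
\[
\|u-u_\T\|^p_{L^p_\Delta(\T)}
\le\frac{1}{\mu_\Delta(\T)}\iint_{\Omega_\T}|u(t)-u(s)|^p\,d(\mu_\Delta\otimes\mu_\Delta)
\le\frac{(\operatorname{diam}\T)^{1+\alpha p}}{\mu_\Delta(\T)}\,[u]^p_{W^{\alpha,p}_\Delta(\T)}.
\]
So the theorem holds on every bounded time scale with $\mu_\Delta(\T)>0$. For this statement you need neither the component decomposition, nor \cref{lem:discrete-poincare}, nor $\delta_0$, nor the positivity of singleton masses. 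The decomposition only becomes genuinely useful for the $L^q$ gain in \cref{thm:embedding}.

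The averaging argument is also insensitive to the fine structure of $\mu_\Delta$. In the standard $\Delta$-measure, the right endpoint $b_\ell$ of an interval component that is not the last one is right-scattered. It therefore carries an atom of mass $\sigma(b_\ell)-b_\ell\ge\delta_0$, so the claim that $\mu_\Delta|_{I_\ell}$ is Lebesgue measure is not accurate. The paper relies on that claim, and so does your remark that the diagonal is null. A cited Lebesgue-measure Poincar\'e inequality does not see this atom. Your Jensen step handles it automatically if you run it with $\mu_\Delta(I_\ell)^{-1}\,d\mu_\Delta(s)$ in place of $|I_\ell|^{-1}\,ds$.
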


\begin{proof}
For each $C\in\mathcal C$, we write
$u-u_{\T}=(u-u_C)+(u_C-u_{\T})$. Using
$|x+y|^p\le 2^{p-1}(|x|^p+|y|^p)$ and integrating over
$\T=\bigcup_{C\in\mathcal C} C$, we obtain
\begin{equation}\label{eq:poincare-decomposition}
\|u-u_{\T}\|_{L_\Delta^p(\T)}^p
\le
2^{p-1}\sum_{C\in\mathcal C}\int_C |u-u_C|^p\,d\mu_\Delta
+
2^{p-1}\sum_{C\in\mathcal C}\mu_\Delta(C)\,|u_C-u_{\T}|^p.
\end{equation}

\medskip
\noindent
\textbf{Step 1: oscillation inside each component.}
Let $C\in\mathcal C$. If $C=I_\ell=[a_\ell,b_\ell]$ is an interval
component, then $\mu_\Delta$ restricted to $I_\ell$ coincides with the
Lebesgue measure on $[a_\ell,b_\ell]$; see
\cite[Theorem~5.2]{CabadaVivero2006}. Therefore
$u|_{I_\ell}\in W^{\alpha,p}(I_\ell)$ in the classical one-dimensional
Gagliardo sense, and the standard fractional Poincar\'e inequality on
bounded intervals (see, e.g.,
\cite[Theorem~6.7]{DiNezzaPalatucciValdinoci2012}) yields
\[
\int_{I_\ell}|u-u_{I_\ell}|^p\,dt
\le
C_\ell
\iint_{I_\ell\times I_\ell}
\frac{|u(t)-u(s)|^p}{|t-s|^{1+\alpha p}}\,dt\,ds,
\]
where $C_\ell$ depends only on $|I_\ell|$, $\alpha$, and $p$. Since
$\mu_\Delta|_{I_\ell}$ equals the Lebesgue measure, both sides can
equivalently be written in terms of $d\mu_\Delta$. If $C=\{d_j\}$ is a
singleton, then $u-u_C=0$ $\mu_\Delta$-a.e.\ on $C$. Since $\mathcal C$
is finite, summing over all $C\in\mathcal C$ with a single constant $C>0$
bounding all $C_\ell$ gives
\[
\sum_{C\in\mathcal C}\int_C |u-u_C|^p\,d\mu_\Delta
\le
C\sum_{C\in\mathcal C}
\iint_{(C\times C)\cap\Omega_{\T}}
\frac{|u(t)-u(s)|^p}{|t-s|^{1+\alpha p}}
\,d(\mu_\Delta\otimes\mu_\Delta)
\le
C\,[u]_{W_\Delta^{\alpha,p}(\T)}^p.
\]

\medskip
\noindent
\textbf{Step 2: control of the component averages.}
Set $\lambda_C:=\mu_\Delta(C)$ for $C\in\mathcal C$. Applying \cref{lem:discrete-poincare} with $x_C=u_C$ and using
$u_{\T}=(\sum_C \lambda_C u_C)/(\sum_C \lambda_C)$, we get
\[
\sum_{C\in\mathcal C}\mu_\Delta(C)\,|u_C-u_{\T}|^p
\le
C_{\mathrm d}\sum_{\substack{C,C'\in\mathcal C\\ C\neq C'}}
\mu_\Delta(C)\mu_\Delta(C')\,|u_C-u_{C'}|^p.
\]
For $C\neq C'$, Jensen's inequality applied to the probability measure
$(\mu_\Delta(C)\mu_\Delta(C'))^{-1}\,d(\mu_\Delta\otimes\mu_\Delta)$ on
$C\times C'$ gives
\[
|u_C-u_{C'}|^p
\le
\frac1{\mu_\Delta(C)\mu_\Delta(C')}
\iint_{C\times C'}|u(t)-u(s)|^p\,d(\mu_\Delta\otimes\mu_\Delta)(t,s).
\]
Multiplying by $\mu_\Delta(C)\mu_\Delta(C')$, summing over $C\neq C'$,
and applying \cref{lem:cross-component-bounds}, we obtain
\[
\sum_{C\in\mathcal C}\mu_\Delta(C)\,|u_C-u_{\T}|^p
\le
C\,[u]_{W_\Delta^{\alpha,p}(\T)}^p.
\]

\medskip
\noindent
\textbf{Step 3: conclusion.}
Combining Steps~1 and~2 with \eqref{eq:poincare-decomposition} and taking
the $p$th root yields \eqref{eq:poincare-main}.
\end{proof}

\begin{corollary}\label{cor:coercive-mod-constants}
Under \cref{ass:hybrid-class}, there exists $C>0$ such that
\[
\|u\|_{L^p_{\Delta}(\T)}
\le
C\Big(
[u]_{W^{\alpha,p}_{\Delta}(\T)}
+
|u_{\T}|
\Big)
\qquad
\text{for every }u\in W^{\alpha,p}_{\Delta}(\T).
\]
\end{corollary}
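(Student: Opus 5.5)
The plan is to derive the corollary directly from the Poincaré-type inequality of \cref{thm:poincare} by a triangle-inequality splitting of $u$ into its oscillation about the global average and the average itself. Since \cref{ass:hybrid-class} gives $\mu_\Delta(\T)<\infty$, we have $L^p_\Delta(\T)\subset L^1_\Delta(\T)$ by Hölder's inequality. Hence $u_\T$ is well defined for every $u\in W^{\alpha,p}_\Delta(\T)$, and the constant function $u_\T$ lies in $L^p_\Delta(\T)$.

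First, I write $u=(u-u_\T)+u_\T$ and apply Minkowski's inequality in $L^p_\Delta(\T)$:
\[
\|u\|_{L^p_\Delta(\T)}\le \|u-u_\T\|_{L^p_\Delta(\T)}+\|u_\T\|_{L^p_\Delta(\T)}.
\]
Second, I bound the first term by $C_P[u]_{W^{\alpha,p}_\Delta(\T)}$ using \eqref{eq:poincare-main}. Third, I compute the second term exactly as $\|u_\T\|_{L^p_\Delta(\T)}=\mu_\Delta(\T)^{1/p}|u_\T|$, which is finite since $\T$ is bounded with finitely many components of finite measure.

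Combining the three steps gives the claim with $C:=\max\{C_P,\mu_\Delta(\T)^{1/p}\}$. This constant depends only on $\T$, $\alpha$, and $p$.

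No step is a genuine obstacle, since all the analytic content is carried by \cref{thm:poincare}. The only points to check are that $u_\T$ is finite and that $\mu_\Delta(\T)>0$, so that the average is defined. Both follow from \cref{ass:hybrid-class}: every component has $0<\mu_\Delta(C)<\infty$, and there are finitely many components.
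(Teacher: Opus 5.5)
Your proposal is correct and is the same argument as the paper's: split $u=(u-u_{\T})+u_{\T}$, apply the triangle inequality and \cref{thm:poincare}, and use $\|u_{\T}\|_{L^p_\Delta(\T)}=|u_{\T}|\,\mu_\Delta(\T)^{1/p}$. Your explicit constant $C=\max\{C_P,\mu_\Delta(\T)^{1/p}\}$ and your check that $u_{\T}$ is well defined under \cref{ass:hybrid-class} are sound.
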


\begin{proof}
Write $u=(u-u_{\T})+u_{\T}$, apply the triangle inequality and
\cref{thm:poincare}, and bound
$\|u_{\T}\|_{L^p_\Delta(\T)}=|u_{\T}|\,\mu_\Delta(\T)^{1/p}$.
\end{proof}

The Poincar\'e inequality of \cref{thm:poincare} controls the $L^p$-norm
of $u-u_\T$ by the Gagliardo seminorm, with a constant $C_P$ depending
on the geometry of $\T$. Proposition~\ref{prop:poincare-gap} below makes
this geometric dependence quantitative for a two-component family
$\T_\delta=[0,1]\cup[1+\delta,2+\delta]$, showing that $C_P(\delta)$
grows at least polynomially as the gap width $\delta$ increases.
A natural further question is whether the Gagliardo seminorm also
provides a \emph{gain of integrability}, that is, an embedding into
$L^q_\Delta(\T)$ for exponents $q>p$. The following
\S\ref{subsec:sobolev} shows that this is indeed the case when
$\alpha p<1$, with the critical exponent
$p^*_\alpha=p/(1-\alpha p)$ inherited from the classical
one-dimensional theory on each interval component.

\begin{proposition}[Dependence of the Poincar\'e constant on the gap]
\label{prop:poincare-gap}
Let $\alpha\in(0,1)$, $p=2$, and
$\T_\delta=[0,1]\cup[1+\delta,2+\delta]$ for $\delta>0$. Define
\[
K(\delta,\alpha) :=
\begin{cases}
\dfrac{2(1+\delta)^{1-2\alpha}-\delta^{1-2\alpha}-(2+\delta)^{1-2\alpha}}
{2\alpha(1-2\alpha)}
& \text{if }\alpha\neq\tfrac{1}{2},\\[8pt]
\ln\!\left(\dfrac{(1+\delta)^2}{\delta(2+\delta)}\right)
& \text{if }\alpha=\tfrac{1}{2}.
\end{cases}
\]
Then the optimal Poincar\'e constant in
$\|u-u_{\T_\delta}\|_{L^2_\Delta}\le C_P(\delta)[u]_{W^{\alpha,2}_\Delta(\T_\delta)}$
satisfies
\begin{equation}\label{eq:poincare-lower}
C_P(\delta) \ge \frac{1}{2\,K(\delta,\alpha)^{1/2}}.
\end{equation}
Moreover, as $\delta\to\infty$,
\begin{equation}\label{eq:poincare-blowup}
C_P(\delta) \ge \frac{1}{2}\,\delta^{(1+2\alpha)/2}\bigl(1+o(1)\bigr).
\end{equation}
\end{proposition}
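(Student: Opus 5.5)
The plan is to plug a single explicit test function into the definition of the optimal constant,
\[
C_P(\delta)=\sup_{u\ \text{nonconstant}}\frac{\|u-u_{\T_\delta}\|_{L^2_\Delta}}{[u]_{W^{\alpha,2}_\Delta(\T_\delta)}},
\]
and to compute both sides exactly. The natural candidate is the piecewise constant function $u=\mathbf 1_{[0,1]}-\mathbf 1_{[1+\delta,2+\delta]}$. Both components have Lebesgue $\Delta$-measure $1$ (see \cite[Theorem~5.2]{CabadaVivero2006}), so $u_{\T_\delta}=0$ and $\|u-u_{\T_\delta}\|^2_{L^2_\Delta}=2$.

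For the seminorm, I will use the splitting \eqref{eq:component-splitting}. The intra-component terms vanish because $u$ is constant on each component. The two cross terms are equal by the symmetry in Proposition~\ref{prop:seminorm-structural-properties}\,(ii), and $|u(t)-u(s)|^2=4$ on them. This gives
\[
[u]^2_{W^{\alpha,2}_\Delta(\T_\delta)}=8\int_0^1\!\!\int_{1+\delta}^{2+\delta}(s-t)^{-1-2\alpha}\,ds\,dt .
\]
To evaluate the double integral, I change variables to $r=s-t\in[\delta,2+\delta]$. The induced weight is the triangular function $w(r)=r-\delta$ on $[\delta,1+\delta]$ and $w(r)=2+\delta-r$ on $[1+\delta,2+\delta]$.

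Equivalently, I integrate twice. Let $F(r)=r^{1-2\alpha}/\bigl(2\alpha(1-2\alpha)\bigr)$ for $\alpha\neq\frac12$, so that $F''(r)=-r^{-1-2\alpha}$. Then the double integral equals $-\bigl(F(\delta)-2F(1+\delta)+F(2+\delta)\bigr)$, which is exactly $K(\delta,\alpha)$. For $\alpha=\frac12$ one takes instead $F(r)=-\ln r$ up to affine terms; the same second difference gives $\ln\bigl((1+\delta)^2/(\delta(2+\delta))\bigr)$. Hence $[u]^2=8K$ and
\[
C_P(\delta)\ge \frac{\sqrt2}{\sqrt{8K}}=\frac{1}{2K^{1/2}},
\]
which is \eqref{eq:poincare-lower}.

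For \eqref{eq:poincare-blowup}, I will show $K(\delta,\alpha)=\delta^{-1-2\alpha}(1+o(1))$ as $\delta\to\infty$. The cleanest route avoids Taylor-expanding the explicit formula and works directly from the representation
\[
K=\int_\delta^{2+\delta}w(r)\,r^{-1-2\alpha}\,dr .
\]
On the range of integration, $r^{-1-2\alpha}$ lies between $(2+\delta)^{-1-2\alpha}$ and $\delta^{-1-2\alpha}$, and both are $\delta^{-1-2\alpha}(1+o(1))$. Since $\int w=1$, the asymptotic follows, and this argument is uniform in $\alpha$, including $\alpha=\frac12$. Substituting into \eqref{eq:poincare-lower} gives $C_P(\delta)\ge\frac12\delta^{(1+2\alpha)/2}(1+o(1))$.

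The only step needing real care is the exact evaluation of the double integral and matching its signs and constants with the stated formula for $K$, including the logarithmic case $\alpha=\frac12$. The second-difference identity above is how I would organize that check.
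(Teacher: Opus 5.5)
Your proof of \eqref{eq:poincare-lower} is the paper's proof. You use the same test function $u=\mathbf 1_{[0,1]}-\mathbf 1_{[1+\delta,2+\delta]}$ and obtain the same values $u_{\T_\delta}=0$, $\|u\|^2_{L^2_\Delta}=2$ and $[u]^2=8K(\delta,\alpha)$. The paper only asserts the last identity, so your second-difference check is a useful addition. One slip: for $\alpha=\frac12$ you need $F(r)=\ln r$, so that $F''(r)=-r^{-2}$; the choice $-\ln r$ gives the wrong sign.

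For \eqref{eq:poincare-blowup} your route genuinely differs from the paper's. The paper Taylor-expands $2(1+\delta)^{1-2\alpha}-\delta^{1-2\alpha}-(2+\delta)^{1-2\alpha}$ to second order and divides by $2\alpha(1-2\alpha)$, which leaves $\alpha=\frac12$ untreated. Your representation $K=\int_\delta^{2+\delta}w(r)\,r^{-1-2\alpha}\,dr$, with $w\ge0$ and $\int w=1$, gives $(2+\delta)^{-1-2\alpha}\le K\le\delta^{-1-2\alpha}$ with no case distinction. This is stronger than needed: the upper bound alone yields $C_P(\delta)\ge\frac12\,\delta^{(1+2\alpha)/2}$ for every $\delta>0$, with no $o(1)$ term.

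One step that you share with the paper needs a caveat: the claim that both components have $\Delta$-measure $1$.
\begin{itemize}
\item For the Lebesgue $\Delta$-measure, every right-scattered point carries mass $\sigma(t)-t$. The paper itself uses this rule in \cref{rem:nontriv-masses} and \cref{ex:hardy-ckn}.
\item On $\T_\delta$ the point $1$ is right-scattered with $\sigma(1)=1+\delta$. Hence $\mu_\Delta(\{1\})=\delta$ and $\mu_\Delta([0,1])=1+\delta$.
\item With this atom, your test function has $u_{\T_\delta}=\delta/(2+\delta)\neq0$. Its seminorm also acquires the extra term $8\delta\int_{1+\delta}^{2+\delta}(s-1)^{-1-2\alpha}\,ds\sim8\delta^{-2\alpha}$, which swamps $8K\sim8\delta^{-1-2\alpha}$.
\end{itemize}
The claimed growth then actually fails, as follows.
\begin{itemize}
\item The mean minimizes $c\mapsto\int_{[0,1]}|u-c|^2\,d\mu_\Delta$. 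Choosing $c=u(1)$ and using the $[0,1)\times\{1\}$ interactions bounds the oscillation on $[0,1]$ by $[u]^2/(2\delta)$.
\item Running the proof of \cref{thm:poincare} with $|t-s|\le2+\delta$ on the cross terms then gives $C_P(\delta)^2\le\frac{1}{2\delta}+C_1(\alpha)+\frac12(2+\delta)^{2\alpha}$. Here $C_1(\alpha)$ is the constant in the fractional Poincar\'e inequality on a unit interval.
\item Thus $C_P(\delta)=O(\delta^{\alpha})$, which contradicts \eqref{eq:poincare-lower} for large $\delta$.
\end{itemize}
So your argument, like the paper's, proves the proposition only for the measure that is Lebesgue on each of the two intervals and ignores the atom at $1$. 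Under that convention your proof is complete. You should state the convention explicitly rather than attribute it to Cabada--Vivero.
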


\begin{proof}
Consider the piecewise-constant test function
$u_\delta = \mathbf{1}_{[0,1]} - \mathbf{1}_{[1+\delta,2+\delta]}$.
Then $(u_\delta)_{\T_\delta}=0$,
$\|u_\delta\|^2_{L^2_\Delta(\T_\delta)}=2$, and
\[
[u_\delta]^2_{W^{\alpha,2}_\Delta(\T_\delta)}
= 2\iint_{[0,1]\times[1+\delta,2+\delta]}
\frac{4}{|t-s|^{1+2\alpha}}\,dt\,ds
= 8\,K(\delta,\alpha),
\]
since the intra-component contributions vanish (each
restriction is constant). The Poincar\'e inequality gives
$2 \le C_P(\delta)^2 \cdot 8\,K(\delta,\alpha)$, hence
\eqref{eq:poincare-lower}.

Consider the piecewise-constant test function
$u_\delta = \mathbf{1}_{[0,1]} - \mathbf{1}_{[1+\delta,2+\delta]}$.
Then $(u_\delta)_{\T_\delta}=0$,
$\|u_\delta\|^2_{L^2_\Delta(\T_\delta)}=2$, and
\[
[u_\delta]^2_{W^{\alpha,2}_\Delta(\T_\delta)}
= 2\iint_{[0,1]\times[1+\delta,2+\delta]}
\frac{4}{|t-s|^{1+2\alpha}}\,dt\,ds
= 8\,K(\delta,\alpha),
\]
since the intra-component contributions vanish (each
restriction is constant). The Poincar\'e inequality gives
$2 \le C_P(\delta)^2 \cdot 8\,K(\delta,\alpha)$, hence
\eqref{eq:poincare-lower}.

For the asymptotic \eqref{eq:poincare-blowup}, we expand each term of
$f(\delta):=2(1+\delta)^{1-2\alpha}-\delta^{1-2\alpha}-(2+\delta)^{1-2\alpha}$
as $\delta\to\infty$ using the binomial expansion
$(1+x)^\beta = 1 + \beta x + \tfrac{\beta(\beta-1)}{2}x^2 + O(x^3)$
with $\beta=1-2\alpha$, $x=\delta^{-1}$ and $x=2\delta^{-1}$ respectively:
\begin{align*}
2(1+\delta)^{1-2\alpha}
&= 2\delta^{1-2\alpha}
  \!\left[1 + \frac{1-2\alpha}{\delta}
   + \frac{(1-2\alpha)(-2\alpha)}{2\,\delta^2}
   + O(\delta^{-3})\right],\\
\delta^{1-2\alpha}
&= \delta^{1-2\alpha},\\
(2+\delta)^{1-2\alpha}
&= \delta^{1-2\alpha}
  \!\left[1 + \frac{2(1-2\alpha)}{\delta}
   + \frac{4(1-2\alpha)(-2\alpha)}{2\,\delta^2}
   + O(\delta^{-3})\right].
\end{align*}
Subtracting:
\[
f(\delta)
= \delta^{1-2\alpha}
\left[
  \underbrace{2-1-1}_{=\,0}
  + \frac{1}{\delta}\underbrace{2(1-2\alpha)-2(1-2\alpha)}_{=\,0}
  + \frac{(1-2\alpha)(-2\alpha)}{2\,\delta^2}
    \underbrace{(2 - 4)}_{=-2}
  + O(\delta^{-3})
\right],
\]
so
\[
f(\delta)
= \frac{(1-2\alpha)(-2\alpha)(-2)}{2}\cdot\delta^{1-2\alpha-2}
  + O(\delta^{-(2+2\alpha)})
= \frac{2\alpha(1-2\alpha)}{\delta^{1+2\alpha}}
  + O(\delta^{-(2+2\alpha)}).
\]
Hence $K(\delta,\alpha)=f(\delta)/(2\alpha(1-2\alpha))\sim\delta^{-(1+2\alpha)}$,
which substituted into \eqref{eq:poincare-lower} yields \eqref{eq:poincare-blowup}.
\end{proof}
\begin{proposition}[Two-sided gap estimate for the Poincar\'e constant]
\label{prop:two-sided-poincare}
In the setting of Proposition~\ref{prop:poincare-gap} ($p = 2$, 
$\mathbb{T}_\delta = [0,1] \cup [1+\delta, 2+\delta]$), there exists a 
constant $C^* = C^*(\alpha) > 0$ such that, for every $\delta \ge 1$,
\begin{equation}\label{eq:two-sided-poincare}
\frac{1}{2 \, K(\delta, \alpha)^{1/2}} 
\;\le\; C_P(\delta) 
\;\le\; C^* \, \delta^{(1+2\alpha)/2}.
\end{equation}
In particular, $C_P(\delta) \asymp \delta^{(1+2\alpha)/2}$ as 
$\delta \to \infty$, and the lower bound \eqref{eq:poincare-lower} 
captures the sharp asymptotic rate.
\end{proposition}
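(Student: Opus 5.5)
The lower bound in \eqref{eq:two-sided-poincare} is exactly \eqref{eq:poincare-lower} from Proposition~\ref{prop:poincare-gap}, so only the upper bound needs proof. For that I would rerun the proof of Theorem~\ref{thm:poincare} on $\T_\delta=I_1\cup I_2$, with $I_1=[0,1]$ and $I_2=[1+\delta,2+\delta]$, keeping track of how each constant depends on $\delta$. The decomposition \eqref{eq:poincare-decomposition} with $p=2$ gives
\[
\|u-u_{\T_\delta}\|^2_{L^2_\Delta}\le 2\sum_{\ell=1}^2\int_{I_\ell}|u-u_{I_\ell}|^2\,dt+2\sum_{\ell=1}^2|u_{I_\ell}-u_{\T_\delta}|^2,
\]
because $\mu_\Delta(I_\ell)=1$.

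\textbf{Intra-component term.} Both intervals have unit length. The classical fractional Poincaré inequality on a unit interval (\cite[Theorem~6.7]{DiNezzaPalatucciValdinoci2012}), combined with translation invariance, gives a constant $C_1(\alpha)$ that does not depend on $\delta$. Hence
\[
\sum_\ell\int_{I_\ell}|u-u_{I_\ell}|^2\,dt\le C_1(\alpha)\,[u]^2_{W^{\alpha,2}_\Delta(\T_\delta)}.
\]

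\textbf{Averages.} With two components of equal mass, $u_{\T_\delta}=(u_{I_1}+u_{I_2})/2$, so
\[
\sum_\ell|u_{I_\ell}-u_{\T_\delta}|^2=\tfrac12|u_{I_1}-u_{I_2}|^2,
\]
explicitly and with no appeal to Lemma~\ref{lem:discrete-poincare}. Jensen's inequality, as in Step~2 of Theorem~\ref{thm:poincare}, gives
\[
|u_{I_1}-u_{I_2}|^2\le\iint_{I_1\times I_2}|u(t)-u(s)|^2\,dt\,ds.
\]
For $(t,s)\in I_1\times I_2$ we have $|t-s|\le 2+\delta\le3\delta$ when $\delta\ge1$. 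This is the upper half of Lemma~\ref{lem:cross-component-bounds}, now with the explicit value $\operatorname{diam}\T_\delta=2+\delta$. Therefore
\[
\iint_{I_1\times I_2}|u(t)-u(s)|^2\,dt\,ds\le(3\delta)^{1+2\alpha}\iint_{I_1\times I_2}\frac{|u(t)-u(s)|^2}{|t-s|^{1+2\alpha}}\,dt\,ds\le\tfrac12(3\delta)^{1+2\alpha}[u]^2_{W^{\alpha,2}_\Delta(\T_\delta)}.
\]
The factor $\tfrac12$ comes from the symmetric splitting \eqref{eq:component-splitting}, since $I_1\times I_2$ and $I_2\times I_1$ contribute equally.

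\textbf{Conclusion.} Combining the two estimates gives
\[
C_P(\delta)^2\le 2C_1(\alpha)+\tfrac12\,3^{1+2\alpha}\delta^{1+2\alpha}.
\]
For $\delta\ge1$ this is at most $\bigl(2C_1+3^{1+2\alpha}\bigr)\delta^{1+2\alpha}$. So $C^*:=(2C_1(\alpha)+3^{1+2\alpha})^{1/2}$ works.

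For the asymptotic claim $C_P(\delta)\asymp\delta^{(1+2\alpha)/2}$, the lower side uses $K(\delta,\alpha)\sim\delta^{-(1+2\alpha)}$. This was derived in the proof of Proposition~\ref{prop:poincare-gap}; alternatively, it follows from $K\le\delta^{-(1+2\alpha)}$, which holds because $|t-s|\ge\delta$ on $I_1\times I_2$.

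The only delicate point is making sure that no hidden $\delta$-dependence enters the intra-component constant or the averaging step. Both are handled by the unit lengths and equal masses of the two components.
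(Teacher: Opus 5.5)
Your proof is correct and follows the paper's argument. Both apply the fractional Poincar\'e inequality on each unit interval with a $\delta$-independent constant, control the averages by Jensen on $I_1\times I_2$, and use $|t-s|\le 3\delta$ for $\delta\ge1$. The differences are bookkeeping only: you use the $2^{p-1}$ splitting where the paper normalizes $u_{\T_\delta}=0$ and decomposes exactly, you exploit the symmetric factor $\tfrac12$, and you note the elementary bound $K(\delta,\alpha)\le\delta^{-(1+2\alpha)}$ for the lower side. Both routes end at the same bound $2C_1(\alpha)+\tfrac12\,3^{1+2\alpha}\delta^{1+2\alpha}$.
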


\begin{proof}
The lower bound was established in Proposition~\ref{prop:poincare-gap}. 
We prove the upper bound.

Let $u \in W^{\alpha,2}_\Delta(\mathbb{T}_\delta)$ with 
$u_{\mathbb{T}_\delta} = 0$. Write $\mathbb{T}_\delta = I_1 \cup I_2$ with 
$I_1 = [0,1]$ and $I_2 = [1+\delta, 2+\delta]$, and let 
$u_i := u_{I_i}$ denote the average on $I_i$. Since 
$\mu_\Delta(I_1) = \mu_\Delta(I_2) = 1$ and $\mu_\Delta(\mathbb{T}_\delta) = 2$, 
the global vanishing average condition $u_{\mathbb{T}_\delta} = 0$ gives
\[
u_1 + u_2 = 0.
\]

\smallskip
\noindent\textbf{Step 1: $L^2$ decomposition.} Writing 
$u = (u - u_i) + u_i$ on each $I_i$,
\[
\|u\|^2_{L^2_\Delta(\mathbb{T}_\delta)} 
= \sum_{i=1}^2 \|u - u_i\|^2_{L^2(I_i)} + \sum_{i=1}^2 |u_i|^2 \, \mu_\Delta(I_i)
= \sum_{i=1}^2 \|u - u_i\|^2_{L^2(I_i)} + 2|u_1|^2,
\]
where the last equality uses $|u_1|^2 = |u_2|^2$.

\smallskip
\noindent\textbf{Step 2: control of intra-component oscillation.} On each 
interval $I_i$ of length~$1$ (independent of $\delta$), the classical 
fractional Poincar\'e inequality on bounded intervals gives
\[
\|u - u_i\|^2_{L^2(I_i)} 
\le C_1(\alpha) \, [u]^2_{W^{\alpha,2}(I_i)} 
\le C_1(\alpha) \, [u]^2_{W^{\alpha,2}_\Delta(\mathbb{T}_\delta)},
\]
where $C_1(\alpha)$ is independent of $\delta$.

\smallskip
\noindent\textbf{Step 3: control of the average.} Using $u_2 = -u_1$ and 
Jensen's inequality on $I_1 \times I_2$,
\[
|u_1|^2 = \frac{|u_1 - u_2|^2}{4} 
\le \frac{1}{4} \iint_{I_1 \times I_2} |u(t) - u(s)|^2 \, dt \, ds.
\]
For $(t,s) \in I_1 \times I_2$ and $\delta \ge 1$, one has 
$\delta \le |t-s| \le 2 + \delta \le 3\delta$, so
\[
|t-s|^{1+2\alpha} \le (3\delta)^{1+2\alpha} = 3^{1+2\alpha} \delta^{1+2\alpha},
\]
and therefore
\[
\iint_{I_1 \times I_2} |u(t) - u(s)|^2 \, dt \, ds 
\le 3^{1+2\alpha} \delta^{1+2\alpha} 
\iint_{I_1 \times I_2} \frac{|u(t) - u(s)|^2}{|t-s|^{1+2\alpha}} \, dt \, ds 
\le 3^{1+2\alpha} \delta^{1+2\alpha} \, [u]^2_{W^{\alpha,2}_\Delta(\mathbb{T}_\delta)}.
\]
Hence
\[
2|u_1|^2 \le \frac{3^{1+2\alpha}}{2} \delta^{1+2\alpha} \, 
[u]^2_{W^{\alpha,2}_\Delta(\mathbb{T}_\delta)}.
\]

\smallskip
\noindent\textbf{Step 4: combination.} Summing the bounds of Steps~2 and~3,
\[
\|u\|^2_{L^2_\Delta(\mathbb{T}_\delta)} 
\le \Big( 2 C_1(\alpha) + \frac{3^{1+2\alpha}}{2} \delta^{1+2\alpha} \Big) 
\, [u]^2_{W^{\alpha,2}_\Delta(\mathbb{T}_\delta)}.
\]
For $\delta \ge 1$, the right-hand side is bounded by 
$C^*(\alpha) \, \delta^{1+2\alpha} \, [u]^2_{W^{\alpha,2}_\Delta(\mathbb{T}_\delta)}$ 
with $C^*(\alpha) := 2 C_1(\alpha) + \tfrac{1}{2} 3^{1+2\alpha}$. Taking square 
roots and the supremum over admissible $u$ yields
\[
C_P(\delta) \le \big(C^*(\alpha)\big)^{1/2} \, \delta^{(1+2\alpha)/2},
\]
which proves the upper bound in \eqref{eq:two-sided-poincare}.
\end{proof}

\begin{remark}\label{rem:bbm-sharp}
The two-sided estimate of Proposition~\ref{prop:two-sided-poincare} shows 
that the test function 
$u_\delta = \mathbf{1}_{[0,1]} - \mathbf{1}_{[1+\delta, 2+\delta]}$ used in 
the proof of Proposition~\ref{prop:poincare-gap} is asymptotically optimal: 
it captures the dominant mode in the spectral structure of the Poincar\'e 
quotient as $\delta \to \infty$. The geometric phenomenon is now fully 
quantified: the Poincar\'e constant grows precisely at rate 
$\delta^{(1+2\alpha)/2}$ with the gap, with matching constants up to a 
universal factor depending only on $\alpha$.
\end{remark}
\begin{remark}\label{rem:gap-general-p}\leavevmode
\begin{enumerate}
\item
For general \(1\le p<\infty\), the same test function as in
\cref{prop:poincare-gap},
\[
u_\delta=\mathbf{1}_{[0,1]}-\mathbf{1}_{[1+\delta,2+\delta]},
\]
gives
\[
[u_\delta]^p_{W^{\alpha,p}_\Delta(\T_\delta)}
= 2^p\iint_{[0,1]\times[1+\delta,2+\delta]}
|t-s|^{-(1+\alpha p)}\,dt\,ds
= 2^p\,K_p(\delta,\alpha),
\]
where
\[
K_p(\delta,\alpha)\sim\delta^{-(1+\alpha p)}
\qquad\text{as }\delta\to\infty,
\]
by the same Taylor expansion as in the proof of
\cref{prop:poincare-gap}. Hence
\[
C_P(\delta)\ge c\,\delta^{(1+\alpha p)/p},
\]
that is,
\[
C_P(\delta)\ge c\,\delta^{\alpha+1/p}.
\]
In particular, the blow-up exponent increases with \(\alpha\), while for fixed
\(\alpha\) it decreases with \(p\).

\item
\cref{prop:poincare-gap} reveals a quantitative mechanism that is specific to
the time-scale setting and has no counterpart in the classical theory on
\(\mathbb{R}\): the Poincar\'e constant grows like
\[
\delta^{(1+2\alpha)/2}
\]
with the gap separating two components, and the exponent increases with the
fractional order \(\alpha\). In other words, \emph{higher-order Gagliardo
spaces are more sensitive to the geometric fragmentation of the time scale}.
This is consistent with the fact that, when \(\alpha\) increases, the kernel
\[
|t-s|^{-(1+2\alpha)}
\]
decays faster at large distances. As a consequence, the interaction between
well-separated components becomes weaker more rapidly, so that the Poincar\'e
constant grows more strongly with the gap.
\end{enumerate}
\end{remark}
\begin{example}\label{ex:poincare-table}

Table~\ref{tab:poincare} displays the lower bound \eqref{eq:poincare-lower} for several values of
\(\delta\) and \(\alpha\), illustrating the polynomial blow-up of the
Poincar\'e constant as the gap width increases.

The numerical values in Table~\ref{tab:poincare} should be read as more than a simple list of examples. First, for each fixed \(\alpha\), the lower bound increases steadily with \(\delta\), which is exactly the behavior predicted by the geometric argument behind \eqref{eq:poincare-lower}. As the two connected components move farther apart, the cross-interaction measured by the Gagliardo seminorm becomes weaker, so a function taking opposite signs on the two intervals can keep a comparable \(L^2\)-mass while paying less nonlocal energy. This progressive loss of interaction is precisely what drives the growth of the Poincar\'e constant.

Second, for large values of \(\delta\), the dependence on \(\alpha\) is consistent with the theoretical scaling. Larger values of \(\alpha\) produce a stronger asymptotic blow-up. This reflects the fact that the kernel
\[
|t-s|^{-1-2\alpha}
\]
decays faster at large distances when \(\alpha\) increases, so that the coupling between well-separated components is damped more severely. In this sense, higher-order fractional seminorms are more sensitive to the fragmentation of the time scale.

Most importantly, Table~\ref{tab:poincare} confirms the power-law mechanism captured by \cref{prop:poincare-gap}. The growth observed numerically is consistent with the exponent \((1+2\alpha)/2\) in the case \(p=2\), and with the more general exponent \(\alpha+1/p\) described in \cref{rem:gap-general-p}. Therefore, the table supports the interpretation that the blow-up of the Poincar\'e constant is a geometric effect of the gap between the connected components.
\begin{table}[ht]
\centering
\renewcommand{\arraystretch}{1.15}
\begin{tabular}{r|ccccccc}
\hline
$\delta$
& $\alpha\!=\!0.10$
& $\alpha\!=\!0.25$
& $\alpha\!=\!0.40$
& $\alpha\!=\!0.50$
& $\alpha\!=\!0.60$
& $\alpha\!=\!0.75$
& $\alpha\!=\!0.90$ \\
\hline
$0.1$   & $0.459$ & $0.434$ & $0.402$ & $0.378$
        & $0.352$ & $0.310$ & $0.268$ \\
$1$     & $0.735$ & $0.805$ & $0.880$ & $0.932$
        & $0.987$ & $1.07$ & $1.16$ \\
$5$     & $1.46$ & $1.91$ & $2.49$ & $2.98$
        & $3.56$ & $4.65$ & $6.07$ \\
$10$    & $2.11$ & $3.02$ & $4.32$ & $5.49$
        & $6.97$ & $9.99$ & $14.3$ \\
$50$    & $5.29$ & $9.54$ & $17.2$ & $25.5$
        & $37.8$ & $68.1$ & $123$  \\
$100$   & $7.97$ & $15.9$ & $31.8$ & $50.5$
        & $80.1$ & $160$ & $320$  \\
\hline
\\[-6pt]
\multicolumn{8}{c}{\footnotesize
Predicted rate:
$C_P(\delta)\sim\tfrac12\,\delta^{(1+2\alpha)/2}$} \\
\hline
\end{tabular}
\caption{Lower bound on $C_P(\delta)$ for
$\T_\delta=[0,1]\cup[1+\delta,2+\delta]$, $p=2$.
The growth rate accelerates with $\alpha$, confirming
that higher fractional regularity amplifies the
sensitivity to geometric separation.}
\label{tab:poincare}
\end{table}
\end{example}

\subsection{A brief numerical illustration}
\label{subsec:numerical}

The following computations are included only as a short illustration of the geometric content of the Poincar\'e inequality. They are not intended as a separate numerical study, and no rigorous discretization error analysis is claimed.

We consider the hybrid family
\[
\T_\delta=[0,1]\cup\{1+\delta\}\cup[2+\delta,3+\delta],
\qquad \delta>0,
\]
and, for a fixed order $0<\alpha<\tfrac12$, the first nonlocal Poincar\'e eigenvalue
\[
\lambda_1(\delta)
:=
\inf_{\substack{u\in W^{\alpha,2}_{\Delta}(\T_\delta)\\ \int_{\T_\delta}u\,d\mu_\Delta=0,\ u\neq0}}
\frac{
\displaystyle
\iint_{\Omega_{\T_\delta}}
\frac{|u(t)-u(s)|^2}{|t-s|^{1+2\alpha}}
\,d(\mu_\Delta\otimes\mu_\Delta)(t,s)
}{
\displaystyle
\int_{\T_\delta}|u(t)|^2\,d\mu_\Delta(t)
}.
\]
The corresponding optimal Poincar\'e constant is $C_P(\delta)=\lambda_1(\delta)^{-1/2}$.

We approximate $\lambda_1(\delta)$ by a cell-based discretization: each interval component is partitioned into $N$ uniform cells of size $h=1/N$, while the isolated point $\{1+\delta\}$ is retained as an exact discrete node. The associated mass matrix uses the exact $\Delta$-mass of the isolated point, namely $\mu_\Delta(\{1+\delta\})=1$, and the stiffness matrix is assembled from the exact interaction coefficients between cells and between cells and the isolated node.

For this hybrid eigenvalue computation, the restriction $0<\alpha<1/2$ is dictated by the use of a piecewise constant trial space on the interval components: in that regime, jumps still have finite nonlocal energy, whereas a conforming treatment of $\alpha\ge 1/2$ would require a continuous piecewise affine discretization. In the computations below we take $\alpha=0.35$ and $N=150$.

\begin{figure}[tbp]
\centering

\begin{minipage}{0.48\textwidth}
\centering
\includegraphics[
width=\textwidth,
height=4.2cm,
keepaspectratio
]{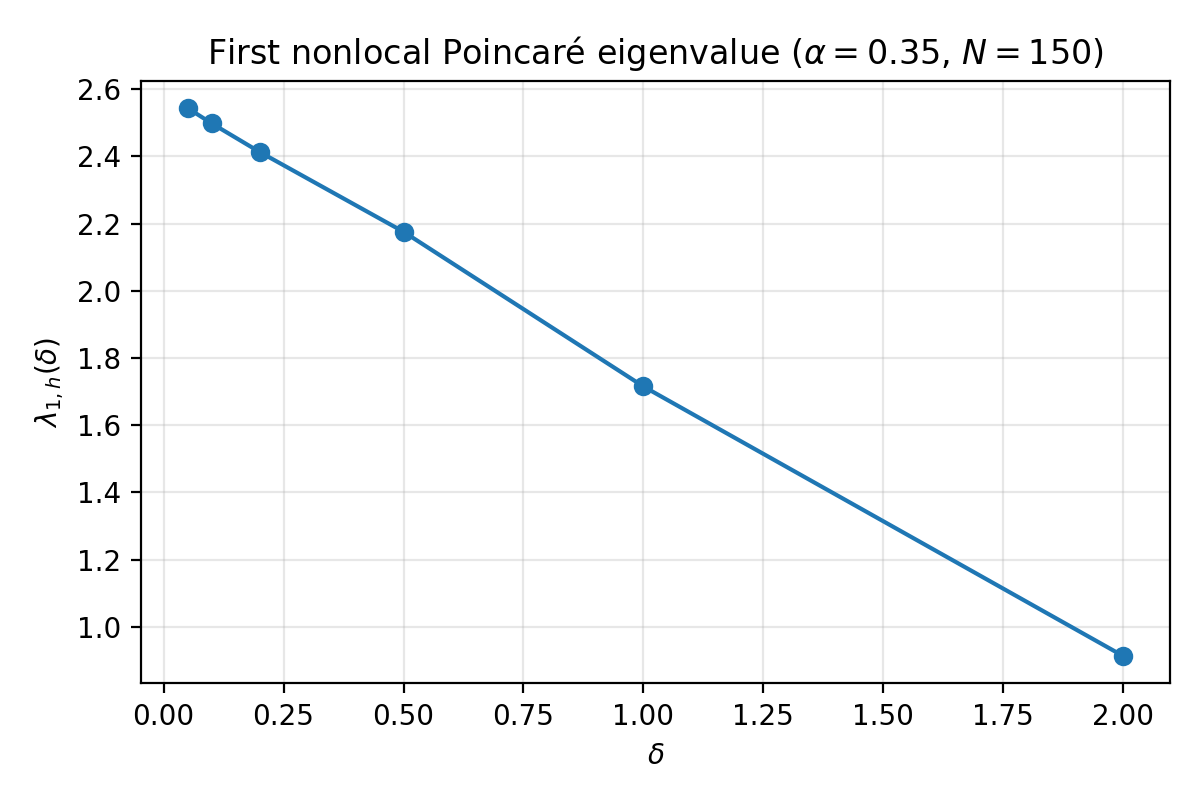}

\vspace{0.15cm}

\small
(a) First nonlocal Poincar\'e eigenvalue
$\lambda_{1,h}(\delta)$.
\end{minipage}
\hfill
\begin{minipage}{0.48\textwidth}
\centering

\includegraphics[
width=\textwidth,
height=4.2cm,
keepaspectratio
]{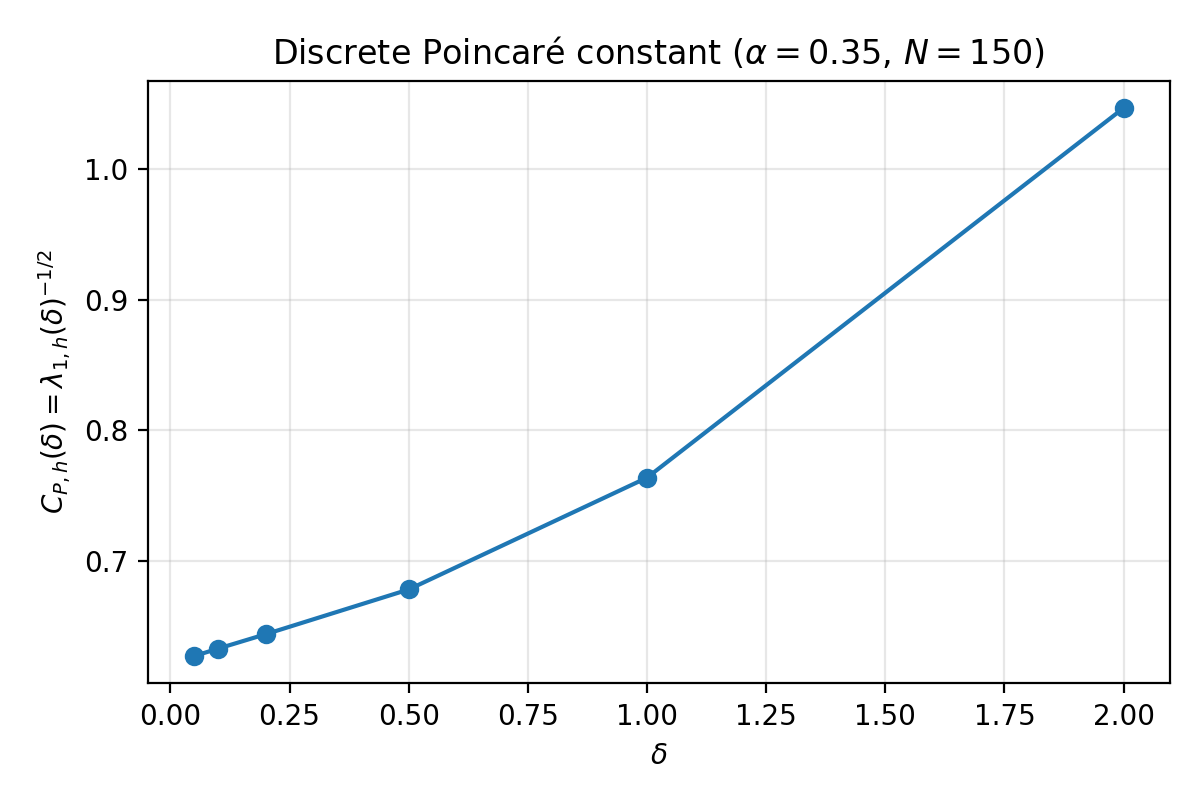}

\vspace{0.15cm}

\small
(b) Discrete Poincar\'e constant
$C_{P,h}(\delta)=\lambda_{1,h}(\delta)^{-1/2}$.
\end{minipage}

\caption{Geometric dependence of the first nonlocal Poincar\'e eigenvalue
and of the associated discrete Poincar\'e constant for the family
$\T_\delta=[0,1]\cup\{1+\delta\}\cup[2+\delta,3+\delta]$,
with $\alpha=0.35$ and $N=150$.}

\label{fig:lambda-cp}

\end{figure}

Figure~\ref{fig:lambda-cp} is consistent with the geometric intuition behind \cref{thm:poincare,prop:poincare-gap}: as the connected components become more separated, the nonlocal coupling weakens, the first eigenvalue decreases, and the corresponding Poincar\'e constant increases. We regard this subsection only as an illustration of the mechanism already captured by the theoretical estimates.

\subsection{Sobolev-type embedding on bounded hybrid time scales}
\label{subsec:sobolev}

We begin with an auxiliary estimate that lifts the cross-component control
from $L^p$ to $L^q$ for $q\ge p$.

\begin{lemma}\label{lem:cross-Lq}
Under \cref{ass:hybrid-class}, for every $q\ge p$ and every two distinct
components $C,C'\in\mathcal C$,
\[
|u_C - u_{C'}|^q
\le
\left(\frac{1}{\mu_\Delta(C)\,\mu_\Delta(C')}
\iint_{C\times C'}|u(t)-u(s)|^p\,
d(\mu_\Delta\otimes\mu_\Delta)\right)^{q/p}.
\]
\end{lemma}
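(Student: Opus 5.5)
The plan is to reduce the claim to the $q=p$ case, which is exactly the Jensen step already used in Step~2 of the proof of \cref{thm:poincare}, and then raise both sides to the power $q/p\ge 1$.

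\textbf{Step 1: the case $q=p$.} Fix distinct components $C,C'\in\mathcal C$. Under \cref{ass:hybrid-class} we have $0<\mu_\Delta(C),\mu_\Delta(C')<\infty$. Hence
\[
d\nu:=\bigl(\mu_\Delta(C)\mu_\Delta(C')\bigr)^{-1}\,d(\mu_\Delta\otimes\mu_\Delta)
\]
is a probability measure on $C\times C'$. By Fubini,
\[
u_C-u_{C'}=\iint_{C\times C'}\bigl(u(t)-u(s)\bigr)\,d\nu(t,s),
\]
since $\int\!\!\int u(t)\,d\nu=u_C$ and $\int\!\!\int u(s)\,d\nu=u_{C'}$. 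This requires $u\in L^1_\Delta$ on $C\cup C'$. That holds for $u\in L^p_\Delta(\T)$ because $\mu_\Delta(\T)<\infty$, via H\"older's inequality.

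Since $x\mapsto|x|^p$ is convex for $p\ge1$, Jensen's inequality gives
\[
|u_C-u_{C'}|^p\le\frac{1}{\mu_\Delta(C)\mu_\Delta(C')}\iint_{C\times C'}|u(t)-u(s)|^p\,d(\mu_\Delta\otimes\mu_\Delta).
\]

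\textbf{Step 2: passing to $q\ge p$.} Both sides of the Step~1 inequality are nonnegative. The map $r\mapsto r^{q/p}$ is nondecreasing on $[0,\infty]$, so raising both sides to the power $q/p$ yields exactly the stated inequality. If the right-hand side is infinite, the claim is trivial. The only role of the hypothesis $q\ge p$ is to make the exponent $q/p$ a legitimate, monotone power; monotonicity alone would in fact suffice for any $q>0$.

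\textbf{Main obstacle.} The argument is essentially routine. The only point needing care is justifying the averaging identity in Step~1: it requires integrability of $u$ on each component, so that the component averages $u_C$ are well defined, and positivity and finiteness of the component masses. For singleton components, positivity is precisely the extra hypothesis $\mu_\Delta(\{d_j\})>0$ in \cref{ass:hybrid-class}. I would state explicitly that $u\in L^p_\Delta(\T)\subset L^1_\Delta(\T)$ on the bounded time scale before invoking Jensen.
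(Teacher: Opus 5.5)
Your proposal is correct and is essentially the paper's proof: Jensen's inequality for the normalized product measure on $C\times C'$ gives the case $q=p$, and raising both sides to the nondecreasing power $q/p$ gives the general case. Your added justification ($u\in L^1_\Delta$ on each component, and component masses positive and finite) is a welcome extra, and you are right that only monotonicity of $x\mapsto x^{q/p}$ is needed, not the convexity the paper also invokes.
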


\begin{proof}
By Jensen's inequality,
$|u_C-u_{C'}|^p \le
(\mu_\Delta(C)\mu_\Delta(C'))^{-1}
\iint_{C\times C'}|u(t)-u(s)|^p\,d(\mu_\Delta\otimes\mu_\Delta)$.
Since $q\ge p$, the map $x\mapsto x^{q/p}$ is convex and nondecreasing on
$[0,\infty)$; raising both sides to the power $q/p$ gives the claim.
\end{proof}

\begin{theorem}[Fractional Sobolev embedding]\label{thm:embedding}
Under \cref{ass:hybrid-class}, let $\alpha\in(0,1)$ and $1\le p<\infty$
with $\alpha p < 1$. Set
\[
p^*_\alpha := \frac{p}{1-\alpha p}.
\]
Then for every $1\le q \le p^*_\alpha$, there exists $C>0$, depending only
on $\T$, $\alpha$, $p$, and $q$, such that
\begin{equation}\label{eq:sobolev-embedding}
\|u - u_\T\|_{L^q_\Delta(\T)}
\le C\,[u]_{W^{\alpha,p}_\Delta(\T)}
\end{equation}
for every $u\in W^{\alpha,p}_\Delta(\T)$. In particular,
$W^{\alpha,p}_\Delta(\T)$ embeds continuously into $L^q_\Delta(\T)$ for
every $q\in[1,\,p^*_\alpha]$.
\end{theorem}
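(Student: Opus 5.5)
The plan mirrors the proof of \cref{thm:poincare}, with the $L^p$ norm replaced by the $L^q$ norm. We decompose $u-u_\T=(u-u_C)+(u_C-u_\T)$ on each component $C\in\mathcal C$. The triangle inequality in $L^q_\Delta(\T)$, together with finiteness of $\mathcal C$, then gives
\[
\|u-u_\T\|_{L^q_\Delta(\T)}\le \sum_{C\in\mathcal C}\|u-u_C\|_{L^q_\Delta(C)}+\sum_{C\in\mathcal C}\mu_\Delta(C)^{1/q}\,|u_C-u_\T|.
\]
Since $\mu_\Delta(\T)<\infty$, Hölder's inequality shows that it suffices to treat $q=p^*_\alpha$. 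The cases $q<p^*_\alpha$ then follow with a constant involving $\mu_\Delta(\T)^{1/q-1/p^*_\alpha}$, and $q\le p$ is covered by \cref{thm:poincare} directly.

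\textbf{Intra-component term.} On a singleton component, $u-u_C=0$. On an interval component $I_\ell$, $\mu_\Delta$ restricts to Lebesgue measure \cite[Theorem~5.2]{CabadaVivero2006}, so $u|_{I_\ell}\in W^{\alpha,p}(I_\ell)$. We then invoke the classical fractional Sobolev embedding on a bounded interval (an extension domain), \cite[Theorems~6.7 and~5.4]{DiNezzaPalatucciValdinoci2012}, which gives $\|v\|_{L^{p^*_\alpha}(I_\ell)}\le C(\|v\|_{L^p(I_\ell)}+[v]_{W^{\alpha,p}(I_\ell)})$. Applying this to $v=u-u_{I_\ell}$ and controlling $\|v\|_{L^p(I_\ell)}$ by the classical interval Poincaré inequality already used in Step~1 of \cref{thm:poincare}, we obtain $\|u-u_{I_\ell}\|_{L^{p^*_\alpha}(I_\ell)}\le C_\ell[u]_{W^{\alpha,p}(I_\ell)}\le C_\ell[u]_{W^{\alpha,p}_\Delta(\T)}$.

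\textbf{Averages term.} This is a finite-dimensional quantity. Applying \cref{lem:discrete-poincare} with $x_C=u_C$, and then Jensen in the form of \cref{lem:cross-Lq} (the case $q=p$) together with \cref{lem:cross-component-bounds}, exactly as in Step~2 of \cref{thm:poincare}, yields $\sum_C\mu_\Delta(C)|u_C-u_\T|^p\le C[u]^p_{W^{\alpha,p}_\Delta(\T)}$. Each $|u_C-u_\T|$ is therefore bounded by $C\,\min_{C}\mu_\Delta(C)^{-1/p}\,[u]_{W^{\alpha,p}_\Delta(\T)}$. Since $\mu_\Delta(C)>0$ for every $C$ by \cref{ass:hybrid-class}, multiplying by $\mu_\Delta(C)^{1/q}$ and summing over finitely many components closes this term.

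The main obstacle is the intra-component step. One must make sure the classical embedding is applied with the seminorm only, i.e.\ with the mean subtracted, and that its constant depends only on $|I_\ell|$, $\alpha$, $p$. The subtraction of the mean with the Poincaré inequality handles the first point. For the second, I would first try the scaling/extension argument on intervals; if needed, I would cite the bounded-domain version of the embedding directly. Finally, the stated continuous embedding into $L^q_\Delta(\T)$ follows from \eqref{eq:sobolev-embedding} together with $\|u_\T\|_{L^q_\Delta(\T)}\le \mu_\Delta(\T)^{1/q-1/p}\|u\|_{L^p_\Delta(\T)}$ for $q\ge p$ (Hölder); for $q<p$ the embedding is simply Hölder on the finite measure space.
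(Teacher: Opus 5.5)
Your proposal is correct and follows essentially the paper's route. Both arguments use the splitting $u-u_\T=(u-u_C)+(u_C-u_\T)$, the classical one-dimensional fractional embedding on each interval component, and the weighted discrete Poincar\'e inequality combined with Jensen and \cref{lem:cross-component-bounds} for the component averages.

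The differences are tactical, and yours are slightly cleaner. You reduce to $q=p^*_\alpha$ by H\"older. You control the averages through the $p$-exponent bound from Step~2 of \cref{thm:poincare} plus finite-dimensionality, instead of rerunning \cref{lem:discrete-poincare} with exponent $q$ and tracking the factor $m_*^{2(1-q/p)}$. You also make explicit the mean subtraction and interval Poincar\'e step. That step is needed to pass from the full-norm embedding (whose cited version already covers the endpoint $q=p^*_\alpha$) to the seminorm-only bound, which the paper's Step~1 takes for granted.
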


\begin{proof}
If $1\le q<p$, then the finiteness of $\mu_\Delta(\T)$ and \cref{thm:poincare} give
\[
\|u-u_\T\|_{L^q_\Delta(\T)}
\le \mu_\Delta(\T)^{1/q-1/p}\|u-u_\T\|_{L^p_\Delta(\T)}
\le C [u]_{W^{\alpha,p}_\Delta(\T)}.
\]
Thus it remains to prove the estimate for $p\le q\le p_\alpha^*$.

We treat the two types of connected components separately and then combine.

\medskip\noindent
\textbf{Step 1: interval components.}
Let $I_\ell=[a_\ell,b_\ell]$ be an interval component. Since
$\mu_\Delta|_{I_\ell}$ coincides with the Lebesgue measure,
$u|_{I_\ell}\in W^{\alpha,p}(I_\ell)$ in the classical Gagliardo sense.
Each $I_\ell$ is a bounded interval (by Assumption~\ref{ass:hybrid-class}),
so both the sub-critical and endpoint one-dimensional fractional Sobolev
inequalities apply directly:
by \cite[Theorem~6.7]{DiNezzaPalatucciValdinoci2012} for $q<p^*_\alpha$
and by \cite[Theorem~8.2]{DiNezzaPalatucciValdinoci2012} for $q=p^*_\alpha$
(the boundedness of $I_\ell$ being an explicit hypothesis of that theorem),
\[
\|u - u_{I_\ell}\|_{L^q(I_\ell)}
\le C_\ell\,[u]_{W^{\alpha,p}(I_\ell)}
\le C_\ell\,[u]_{W^{\alpha,p}_\Delta(\T)}
\]
for every $q\le p^*_\alpha$, where $C_\ell$ depends on $|I_\ell|$,
$\alpha$, $p$, and $q$.

\medskip\noindent
\textbf{Step 2: singleton components.}
If $C=\{d_j\}$, then $\|u - u_C\|^q_{L^q_\Delta(C)} = 0$ for any
$q\ge 1$.

\medskip\noindent
\textbf{Step~3: global estimate.}
We write $u-u_\T=(u-u_C)+(u_C-u_\T)$ on each $C\in\mathcal C$.
By the triangle inequality in $L^q_\Delta(\T)$,
\[
\|u-u_\T\|_{L^q_\Delta(\T)}
\le
\left(\sum_{C\in\mathcal C}\|u-u_C\|^q_{L^q_\Delta(C)}\right)^{1/q}
+\left(\sum_{C\in\mathcal C}\mu_\Delta(C)|u_C-u_\T|^q\right)^{1/q}.
\]
The first term is bounded by $C[u]_{W^{\alpha,p}_\Delta(\T)}$
by Steps~1--2.

For the second term, the discrete weighted Poincar\'e inequality
\eqref{eq:discrete-poincare} with exponent $q$ in place of $p$ gives
\[
\sum_{C\in\mathcal C}\mu_\Delta(C)|u_C-u_\T|^q
\le C_1\sum_{\substack{C,C'\in\mathcal C\\C\neq C'}}
\mu_\Delta(C)\mu_\Delta(C')|u_C-u_{C'}|^q.
\]
By Lemma~\ref{lem:cross-Lq},
\[
|u_C-u_{C'}|^q
\le
\left(\frac{1}{\mu_\Delta(C)\mu_\Delta(C')}
\iint_{C\times C'}|u(t)-u(s)|^p\,d(\mu_\Delta\otimes\mu_\Delta)
\right)^{q/p}.
\]
Let
\[
m_*:=\min_{C\in\mathcal C}\mu_\Delta(C)>0.
\]
Since $q\ge p$, one has $1-q/p\le 0$, and therefore
\[
(\mu_\Delta(C)\mu_\Delta(C'))^{1-q/p}\le m_*^{2(1-q/p)}.
\]
Multiplying by $\mu_\Delta(C)\mu_\Delta(C')$ yields
\[
\mu_\Delta(C)\mu_\Delta(C')|u_C-u_{C'}|^q
\le
m_*^{2(1-q/p)}
\left(\iint_{C\times C'}|u(t)-u(s)|^p\,
d(\mu_\Delta\otimes\mu_\Delta)\right)^{q/p}.
\]
By Lemma~\ref{lem:cross-component-bounds} (equation \eqref{eq:cross-bounds}),
\[
\iint_{C\times C'}|u(t)-u(s)|^p\,d(\mu_\Delta\otimes\mu_\Delta)
\le C_0^{-1}
\iint_{C\times C'}\frac{|u(t)-u(s)|^p}{|t-s|^{1+\alpha p}}\,
d(\mu_\Delta\otimes\mu_\Delta)
\le C_0^{-1}[u]^p_{W^{\alpha,p}_\Delta(\T)}.
\]
Since $\mathcal C$ is finite, summing over all $C\neq C'$ gives
\[
\sum_{C\in\mathcal C}\mu_\Delta(C)|u_C-u_\T|^q
\le C\,[u]^q_{W^{\alpha,p}_\Delta(\T)}.
\]
Taking the $q$-th root and combining with the first term yields
\eqref{eq:sobolev-embedding}.
\end{proof}

\begin{remark}[Dependence of the embedding constant on the geometry]\label{rem:embedding-constant-m-star}
The constant $C$ in~\eqref{eq:sobolev-embedding} is not uniform in
the geometry of $\T$: it depends, in particular, on the quantity
$m_* := \min_{C\in\mathcal C}\mu_\Delta(C)>0$ through a factor of the
form $m_*^{2(1-q/p)}$. Since $q\ge p$ this factor grows as $m_*\to 0$,
reflecting the fact that very light components couple weakly through
the inter-component kernel and force a larger constant in the
cross-component estimate. This dependence is intrinsic to the
component-decomposition argument and cannot be removed without
additional assumptions (for instance, a uniform lower bound on
component masses, which is part of
Assumption~\ref{ass:hybrid-class}). The constant is therefore best
viewed as depending on the full geometric data
$(\T,\alpha,p,q,\delta_0,m_*,\mathrm{diam}\,\T)$, where $\delta_0$
is the inter-component separation. The endpoint $q=p^*_\alpha$,
in particular, requires the one-dimensional endpoint embedding on
each interval component~\cite[Theorem~8.2]{DiNezzaPalatucciValdinoci2012}
and the same dependence on $m_*$.
\end{remark}

\begin{remark}\label{rem:morrey-trudinger}
When $\alpha p = 1$, the classical theory gives embedding into $L^q$ for
every $q<\infty$ but not into $L^\infty$ (Trudinger-type inequality see \cite[Theorem~8.5]{DiNezzaPalatucciValdinoci2012}). When
$\alpha p > 1$, the Morrey embedding gives
$W^{\alpha,p}(I_\ell)\hookrightarrow C^{0,\alpha-1/p}(I_\ell)$ on each
interval component, which yields
$W^{\alpha,p}_\Delta(\T)\hookrightarrow L^\infty_\Delta(\T)$. 
\end{remark}

Having established both the Poincar\'e inequality and the Sobolev embedding,
we now investigate weighted estimates. The next subsection introduces a
fractional Hardy inequality with subcritical weight and derives, as a
consequence, a Caffarelli--Kohn--Nirenberg-type interpolation estimate. These
results illustrate that the Gagliardo framework on time scales supports a
nontrivial weighted theory in which the three estimates---Poincar\'e,
Sobolev embedding, and Hardy---fit together.
\subsection{A fractional Rellich--Kondrachov compact embedding}

The Sobolev embedding of Theorem~\ref{thm:embedding} can be 
strengthened to a compact embedding under the same geometric assumption. 
This is the key ingredient for variational applications.

\begin{theorem}[Fractional Rellich--Kondrachov on hybrid time scales]
\label{thm:rellich}
Let $\mathbb{T}$ satisfy Assumption~\ref{ass:hybrid-class}, let $\alpha \in (0,1)$, 
and let $1 \le p < \infty$.
\begin{enumerate}
\item[(i)] If $\alpha p < 1$, then for every $q \in [1, p^*_\alpha)$ the embedding
\[
W^{\alpha,p}_\Delta(\mathbb{T}) \hookrightarrow L^q_\Delta(\mathbb{T})
\]
is compact.
\item[(ii)] If $\alpha p \ge 1$, then for every $q \in [1, \infty)$ the embedding 
$W^{\alpha,p}_\Delta(\mathbb{T}) \hookrightarrow L^q_\Delta(\mathbb{T})$ is compact.
\end{enumerate}
\end{theorem}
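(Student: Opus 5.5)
The plan is to reduce compactness on $\T$ to compactness on each interval component, exploiting the finite decomposition $\T=\bigcup_{C\in\mathcal C}C$ of \cref{ass:hybrid-class}. Let $(u_n)$ be bounded in $W^{\alpha,p}_\Delta(\T)$, say $\|u_n\|_{W^{\alpha,p}_\Delta(\T)}\le M$. Since $\|u\|^q_{L^q_\Delta(\T)}=\sum_{C\in\mathcal C}\|u\|^q_{L^q_\Delta(C)}$ and $\mathcal C$ is finite, it suffices to extract, component by component (a finite diagonal extraction), a subsequence converging in $L^q_\Delta(C)$ for every $C\in\mathcal C$.

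On a singleton component $C=\{d_j\}$, \cref{ass:hybrid-class} gives $\mu_\Delta(\{d_j\})>0$. Hence $|u_n(d_j)|\le M\mu_\Delta(\{d_j\})^{-1/p}$, and Bolzano--Weierstrass yields a convergent subsequence of the scalars $u_n(d_j)$, which is exactly convergence in $L^q_\Delta(\{d_j\})$ for every $q$.

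On an interval component $I_\ell=[a_\ell,b_\ell]$, $\mu_\Delta$ is Lebesgue measure by \cite[Theorem~5.2]{CabadaVivero2006}. The restrictions $u_n|_{I_\ell}$ are therefore bounded in the classical space $W^{\alpha,p}(I_\ell)$, since the intra-component part of the splitting \eqref{eq:component-splitting} is dominated by the full seminorm.

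\emph{First step on intervals: compactness into $L^p(I_\ell)$.} This is the classical fractional Rellich theorem on bounded extension domains \cite[Theorem~7.1]{DiNezzaPalatucciValdinoci2012}. If one prefers a self-contained route, use the Fréchet--Kolmogorov criterion. Extend $u_n|_{I_\ell}$ by even reflection across the endpoints, which keeps the Gagliardo seminorm bounded up to a constant. Then bound translations by
\[
\int|u(t+h)-u(t)|^p\,dt\le C|h|^{\alpha p}[u]^p,
\]
obtained by averaging over an intermediate point in a ball of radius $|h|$.

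\emph{Second step on intervals: upgrading to the target exponent $q$.} Interpolate between $L^p$ and a larger Lebesgue space where the sequence is bounded:
\begin{itemize}
\item In case (i), for $q\in(p,p^*_\alpha)$, the sequence is bounded in $L^{p^*_\alpha}(I_\ell)$ by Step~1 of \cref{thm:embedding} together with the $L^p$ bound controlling $u_{I_\ell}$. Hölder interpolation $\|v\|_{L^q}\le\|v\|_{L^p}^{\theta}\|v\|_{L^{p^*_\alpha}}^{1-\theta}$ with $\theta\in(0,1]$ then turns $L^p$-Cauchy into $L^q$-Cauchy.
\item For $q<p$, use Hölder on the finite-measure interval.
\item In case (ii), if $\alpha p>1$, boundedness in $C^{0,\alpha-1/p}(I_\ell)$ (Remark~\ref{rem:morrey-trudinger}) gives $L^\infty$ bounds, or even compactness via Arzelà--Ascoli.
\item If $\alpha p=1$, pick $\tilde\alpha<\alpha$. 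Then $W^{\alpha,p}\hookrightarrow W^{\tilde\alpha,p}$ on bounded intervals, since the kernel ratio $|t-s|^{(\alpha-\tilde\alpha)p}$ is bounded, and $\tilde\alpha p<1$ with $p^*_{\tilde\alpha}=p/(1-\tilde\alpha p)\to\infty$ as $\tilde\alpha\uparrow\alpha$. For a given $q$, choose $\tilde\alpha$ with $p^*_{\tilde\alpha}>q$ and apply case (i) on the interval.
\end{itemize}

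The main obstacle is this case-(ii) bookkeeping, especially $\alpha p=1$. The reduction to $\tilde\alpha$ is the first thing to try, since it avoids any Trudinger-type argument.

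The inter-component interactions play no role in the compactness argument, since they only add to the seminorm that is assumed bounded. Assembling the finitely many subsequences gives convergence in $L^q_\Delta(\T)$, which proves both (i) and (ii).
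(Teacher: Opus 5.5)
Your proposal is correct and follows essentially the same route as the paper's proof. Both use a finite diagonal extraction over the components, Bolzano--Weierstrass on the singletons via $|u_n(d_j)|\le M\mu_\Delta(\{d_j\})^{-1/p}$, classical fractional compactness on each interval component, and reassembly of the finitely many pieces in $L^q_\Delta(\T)$. Your interval step is actually more complete than the paper's: the paper cites Corollary~7.2 of Di~Nezza--Palatucci--Valdinoci for every $q$ in (i)--(ii), although that result requires $\alpha p<1$, whereas your Morrey argument for $\alpha p>1$ and your reduction to some $\tilde\alpha<\alpha$ with $p^*_{\tilde\alpha}>q$ for $\alpha p=1$ genuinely cover case~(ii) (the paper's extra Fatou step putting the limit in $W^{\alpha,p}_\Delta(\T)$ is not needed for compactness).
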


\begin{proof}
Let $(u_n) \subset W^{\alpha,p}_\Delta(\mathbb{T})$ be a bounded sequence,
\[
\sup_n \|u_n\|_{W^{\alpha,p}_\Delta(\mathbb{T})} \le M.
\]

\smallskip
\noindent\textbf{Step 1: extraction on each interval component.} Fix 
$\ell \in \{1, \dots, m\}$. The restrictions $(u_n|_{I_\ell})$ form a bounded 
sequence in $W^{\alpha,p}(I_\ell)$ since $\mu_\Delta|_{I_\ell}$ is the 
Lebesgue measure. By the classical fractional Rellich--Kondrachov theorem 
on bounded intervals \cite[Corollary 7.2]{DiNezzaPalatucciValdinoci2012}, this 
sequence is precompact in $L^q(I_\ell)$ for every $q$ in the range stated 
in (i)--(ii). By a diagonal extraction over $\ell = 1, \dots, m$, we obtain 
a subsequence (still denoted $(u_n)$) and limits $v_\ell \in L^q(I_\ell)$ 
such that
\[
u_n|_{I_\ell} \to v_\ell \quad \text{in } L^q(I_\ell), \qquad \ell = 1, \dots, m.
\]

\smallskip
\noindent\textbf{Step 2: extraction on singleton components.} For each 
$j \in \{1, \dots, N\}$, the sequence $(u_n(d_j))_n$ is bounded in $\mathbb{R}$. 
Indeed,
\[
|u_n(d_j)|^p \, \mu_\Delta(\{d_j\}) 
\le \|u_n\|^p_{L^p_\Delta(\mathbb{T})} \le M^p,
\]
hence $|u_n(d_j)| \le M \, \mu_\Delta(\{d_j\})^{-1/p}$. By the Bolzano--Weierstrass 
theorem and a further diagonal extraction, there exist $\xi_j \in \mathbb{R}$ such that
\[
u_n(d_j) \to \xi_j \quad \text{in } \mathbb{R}, \qquad j = 1, \dots, N.
\]

\smallskip
\noindent\textbf{Step 3: assembly of the limit.} Define $u^* : \mathbb{T} \to \mathbb{R}$ by
\[
u^*|_{I_\ell} = v_\ell, \qquad u^*(d_j) = \xi_j.
\]
Then $u^* \in L^q_\Delta(\mathbb{T})$ and
\[
\|u_n - u^*\|^q_{L^q_\Delta(\mathbb{T})} 
= \sum_{\ell=1}^m \|u_n - v_\ell\|^q_{L^q(I_\ell)} 
+ \sum_{j=1}^N |u_n(d_j) - \xi_j|^q \, \mu_\Delta(\{d_j\}).
\]
Both terms tend to $0$ by Steps~1 and~2, hence $u_n \to u^*$ in $L^q_\Delta(\mathbb{T})$.

\smallskip
\noindent\textbf{Step 4: $u^* \in W^{\alpha,p}_\Delta(\mathbb{T})$.} 
By Fatou's lemma applied to the integrand on $\Omega_\mathbb{T}$,
\[
[u^*]^p_{W^{\alpha,p}_\Delta(\mathbb{T})} 
\le \liminf_{n \to \infty} [u_n]^p_{W^{\alpha,p}_\Delta(\mathbb{T})} 
\le M^p.
\]
This concludes the proof.
\end{proof}
\begin{remark}\label{rem:endpoint-noncompact}
The endpoint $q = p^*_\alpha$ is excluded from the compact range
in part~(i). This is sharp: by the classical counterexample on
bounded intervals \cite[Remark~7.3]{DiNezzaPalatucciValdinoci2012},
the embedding $W^{\alpha,p}(I_\ell)\hookrightarrow L^{p^*_\alpha}(I_\ell)$
is continuous but not compact. Since the global space
$W^{\alpha,p}_\Delta(\T)$ restricts to $W^{\alpha,p}(I_\ell)$ on
each interval component (with $\mu_\Delta|_{I_\ell}$ being the
Lebesgue measure), non-compactness at the endpoint propagates to
$W^{\alpha,p}_\Delta(\T)\hookrightarrow L^{p^*_\alpha}_\Delta(\T)$.
\end{remark}
\begin{corollary}[Variational consequence]\label{cor:variational}
Under Assumption~\ref{ass:hybrid-class}, every functional 
$\mathcal{F} : W^{\alpha,p}_\Delta(\mathbb{T}) \to \mathbb{R}$ that is coercive 
with respect to $\|\cdot\|_{W^{\alpha,p}_\Delta(\mathbb{T})}$ and weakly lower 
semicontinuous attains its infimum on every weakly closed bounded subset of 
$W^{\alpha,p}_\Delta(\mathbb{T})$. In particular, the direct method of the 
calculus of variations applies, opening the framework to a wide range of 
nonlocal variational problems on time scales.
\end{corollary}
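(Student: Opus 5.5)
The corollary follows from the direct method once we know that bounded sets in $W^{\alpha,p}_\Delta(\T)$ are weakly sequentially precompact. The plan is therefore to combine reflexivity (\cref{thm:reflexive}) with Eberlein--Šmulian. This requires $1<p<\infty$. For $p=1$ the space need not be reflexive, so I would state that restriction explicitly, or else replace weak compactness by the compactness of \cref{thm:rellich} plus the Fatou lower semicontinuity from Step~4 of its proof. In the latter case one would take ``weakly lower semicontinuous'' to mean lower semicontinuous along $L^q_\Delta$-convergent sequences with bounded seminorm.

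\textbf{Step 1.} Let $K\subset W^{\alpha,p}_\Delta(\T)$ be weakly closed and bounded, and set $m:=\inf_K\mathcal F$. Pick a minimizing sequence $(u_n)\subset K$ with $\mathcal F(u_n)\to m$. Boundedness of $K$ gives $\sup_n\|u_n\|_{W^{\alpha,p}_\Delta(\T)}<\infty$ directly. Coercivity is what handles the unbounded-set version of the statement, and I would remark that it also yields $m>-\infty$ once a minimizer exists.

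\textbf{Step 2.} By \cref{thm:reflexive} the space is reflexive for $1<p<\infty$. By Eberlein--Šmulian, a subsequence then converges weakly, $u_{n_k}\rightharpoonup u$, and $u\in K$ because $K$ is weakly closed. Weak lower semicontinuity then gives $\mathcal F(u)\le\liminf_k\mathcal F(u_{n_k})=m$. Hence $\mathcal F(u)=m$; in particular $m$ is finite and is attained.

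\textbf{Step 3.} Here I would connect the result with \cref{thm:rellich}. Since the embedding into $L^q_\Delta(\T)$ is compact, the weak limit $u$ is also a strong $L^q_\Delta$-limit along a further subsequence. The identification uses that compact operators map weakly convergent sequences to strongly convergent ones. This is what makes lower-order terms such as $\int_\T G(t,u)\,d\mu_\Delta$ continuous along the minimizing sequence, and it justifies the phrase ``the direct method applies.''

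The main obstacle is the gap at $p=1$ and the exact meaning of the hypotheses, rather than any hard estimate. I would handle it by restricting the corollary to $1<p<\infty$. For $p=1$ I would argue through \cref{thm:rellich}: extract an $L^q_\Delta$-convergent subsequence, use Fatou as in Step~4 of that proof to get $u\in W^{\alpha,1}_\Delta(\T)$ with bounded seminorm, and assume $\mathcal F$ is lower semicontinuous for that convergence.
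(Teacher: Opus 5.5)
Your argument is correct for $1<p<\infty$, but it is not the route the paper points to. The paper gives no proof. It places the corollary after \cref{thm:rellich} and calls the compact embedding ``the key ingredient for variational applications.'' You instead rest everything on reflexivity (\cref{thm:reflexive}) plus Eberlein--\v{S}mulian, and that is what the abstract statement actually requires.

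Compactness into $L^q_\Delta(\T)$ does not by itself produce a minimizer of a weakly lower semicontinuous functional on a weakly closed bounded set. From a bounded minimizing sequence, \cref{thm:rellich} gives strong $L^q_\Delta$-convergence and, via Fatou, a limit in $W^{\alpha,p}_\Delta(\T)$. It does not give weak convergence in $W^{\alpha,p}_\Delta(\T)$ unless reflexivity is invoked anyway. As your Step~3 says, compactness is what one uses afterwards to verify weak lower semicontinuity of concrete functionals with lower-order terms. A by-product of your route is that \cref{ass:hybrid-class} is superfluous for $1<p<\infty$, since \cref{thm:reflexive} holds on an arbitrary time scale.

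Your caveat at $p=1$ is more than a limitation of the method. Nothing in the statement excludes $p=1$, since it inherits $1\le p<\infty$ from \cref{thm:rellich}. Yet the corollary is false there whenever $\T$ has an interval component. First, $X:=W^{\alpha,1}_\Delta(\T)$ is then not reflexive. Take $c$ interior to an interval component, $\varphi\in C^\infty_c(-1,1)\setminus\{0\}$, and $u_n:=n^{1-\alpha}\varphi(n(\cdot-c))$ for large $n$, extended by zero. The $u_n$ are bounded in $X$, while $\mathcal D_{\alpha,1}u_n$ carries a fixed positive mass on squares of measure $O(n^{-2})$. Hence the image under $J$ of the unit ball is not uniformly integrable, and by Dunford--Pettis it is not weakly compact.

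Second, by James' theorem there is $f\in X^*$ that does not attain its norm on the closed unit ball $B$. Then $\mathcal F(u):=-f(u)+\bigl(\max\{\|u\|-1,0\}\bigr)^2$ is real-valued, coercive and weakly lower semicontinuous. The ball $B$ is weakly closed and bounded, and $\inf_B\mathcal F=-\|f\|$ is not attained.

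So your $p=1$ alternative necessarily proves a different statement. For it you would also need $K$ to be closed under the convergence you actually use (strong $L^q_\Delta$ with bounded seminorm), not merely weakly closed.

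Two minor points. Finiteness of $m$ comes out of Step~2, because $\mathcal F(u)\in\R$, not from coercivity. In Step~3 the compact embedding sends the weakly convergent subsequence itself to a norm-convergent one, so no further extraction is needed.
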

\subsection{Fractional Hardy and Caffarelli--Kohn--Nirenberg-type
inequalities}
\label{subsec:hardy-ckn}

\begin{theorem}[Fractional Hardy inequality on a bounded time scale]
\label{thm:hardy}
Let $\T\subset\R$ be a bounded hybrid time scale satisfying
\cref{ass:hybrid-class}, and let $x_0$ belong to a nondegenerate interval
component of $\T$. Let $1<p<\infty$, $0<\alpha<1$, and assume
\[
0\le \beta<\alpha,\qquad \beta p<1.
\]
Then there exists $C>0$, depending only on $\T$, $\alpha$, $p$, $\beta$, and
$x_0$, such that
\begin{equation}\label{eq:hardy}
\int_{\T}
\frac{|u(t)-u_{\T}|^p}{|t-x_0|^{\beta p}}\,d\mu_\Delta(t)
\le
C\,\iint_{\Omega_{\T}}
\frac{|u(t)-u(s)|^p}{|t-s|^{1+\alpha p}}
\,d(\mu_\Delta\otimes\mu_\Delta)(t,s)
\end{equation}
for every $u\in W^{\alpha,p}_\Delta(\T)$.
\end{theorem}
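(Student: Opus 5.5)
Write $v:=u-u_{\T}$. The right-hand side of \eqref{eq:hardy} is invariant under adding constants (Proposition~\ref{prop:seminorm-structural-properties}\,(i)), so it equals $[v]^p_{W^{\alpha,p}_\Delta(\T)}$. The plan is to split the left-hand side into a part near $x_0$ and a part away from $x_0$.

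\textbf{Near $x_0$.} Let $I_{\ell_0}=[a,b]$ be the interval component containing $x_0$, and set $r:=\min\{\delta_0,(b-a)\}/2>0$. On $\T\setminus(x_0-r,x_0+r)$ the weight satisfies $|t-x_0|^{-\beta p}\le r^{-\beta p}$. Hence that part of the integral is bounded by $r^{-\beta p}\|v\|^p_{L^p_\Delta(\T)}$, and Theorem~\ref{thm:poincare} bounds this by $r^{-\beta p}C_P^p[u]^p_{W^{\alpha,p}_\Delta(\T)}$.

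\textbf{Inside $(x_0-r,x_0+r)$.} By the choice of $r$, this neighbourhood meets only $I_{\ell_0}$, where $\mu_\Delta$ is Lebesgue measure. I would use Hölder's inequality with the Sobolev gain from Theorem~\ref{thm:embedding}.
\begin{itemize}
\item When $\alpha p<1$, choose $q\in(p,p^*_\alpha]$ with $\beta p\cdot\frac{q}{q-p}<1$. This is possible because the condition is equivalent to $\frac1p-\frac1q>\beta$, and we may take $\frac1p-\frac1q$ up to $\alpha>\beta$.
\item Then $\int_{I_{\ell_0}}|v|^p|t-x_0|^{-\beta p}\,dt\le\|v\|^p_{L^q(I_{\ell_0})}\,\bigl\||t-x_0|^{-\beta p}\bigr\|_{L^{q/(q-p)}(I_{\ell_0})}$.
\item The last factor is finite by the choice of $q$, and \eqref{eq:sobolev-embedding} gives $\|v\|_{L^q_\Delta(\T)}\le C[u]_{W^{\alpha,p}_\Delta(\T)}$.
\end{itemize}
When $\alpha p\ge1$ no Sobolev gain is needed. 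Since $\beta p<1$, the weight $|t-x_0|^{-\beta p}$ is integrable near $x_0$. By Remark~\ref{rem:morrey-trudinger} we have $v\in L^\infty$; more robustly, the one-dimensional embedding of $W^{\alpha,p}(I_{\ell_0})$ into $L^q$ for every $q<\infty$ holds whenever $\alpha p\ge1$. Combining it with the same splitting $v=(v-v_{I_{\ell_0}})+v_{I_{\ell_0}}$ used in the proof of Theorem~\ref{thm:embedding} (component oscillation plus averages controlled via Lemma~\ref{lem:cross-Lq} and Lemma~\ref{lem:discrete-poincare}) gives $\|v\|_{L^q(I_{\ell_0})}\lesssim[u]$ for $q$ large. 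Hölder's inequality then closes the estimate as before. Alternatively, in this case one can reduce directly to the case $\alpha'p<1$ with $\beta<\alpha'<\alpha$. This is possible since $\beta p<1$, and on the bounded component one has $[u]_{W^{\alpha',p}(I_{\ell_0})}\lesssim[u]_{W^{\alpha,p}(I_{\ell_0})}+\|u\|_{L^p}$, with the $L^p$ term absorbed via Poincaré.

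The constant depends on $r$ (hence on $\delta_0$, $|I_{\ell_0}|$ and the position of $x_0$), on $\beta$, on $q$, and on the constants of Theorems~\ref{thm:poincare} and~\ref{thm:embedding}.

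\textbf{Main obstacle.} The delicate step is the regime $\alpha p\ge1$. There Theorem~\ref{thm:embedding} is not stated, so the $L^q$ control of $v$ on $I_{\ell_0}$ must be assembled by hand. The cleanest route is the reduction to a smaller order $\alpha'$, which needs the elementary monotonicity $[w]_{W^{\alpha',p}(I)}^p\le[w]_{W^{\alpha,p}(I)}^p+C(|I|)\|w\|^p_{L^p(I)}$. This follows by splitting $|t-s|<1$ and $|t-s|\ge1$. I would apply it to $w=v$ restricted to $I_{\ell_0}$, and then apply Theorem~\ref{thm:embedding} to the restriction to $I_{\ell_0}$ viewed as a time scale in its own right.
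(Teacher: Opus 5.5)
Your proof is correct and is essentially the paper's argument. Near $x_0$ you use H\"older's inequality to pair the integrable weight $|t-x_0|^{-\beta p}$ with a gain of integrability on the interval component containing $x_0$: the Sobolev embedding when $\alpha p<1$, and the $L^q$-for-every-$q<\infty$ (or $L^\infty$) embedding when $\alpha p\ge 1$. Wherever the weight is bounded you use the Poincar\'e inequality of Theorem~\ref{thm:poincare}, exactly as the paper does.

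The differences are minor. You split by a neighbourhood of $x_0$ rather than by components, and you invoke the global estimate of Theorem~\ref{thm:embedding}, which already contains the oscillation-plus-average splitting $v=(v-v_{I_0})+v_{I_0}$ that the paper carries out by hand. Your optional reduction to an order $\alpha'\in(\beta,\min\{\alpha,1/p\})$ is a valid shortcut that avoids the critical and supercritical embeddings altogether.
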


\begin{remark}\label{rem:hardy-subcritical-necessary}
The restriction $\beta p<1$ is essential for the formulation with the global
centering $u-u_\T$. Indeed, $u-u_\T$ need not vanish at the singular point
$x_0$. Hence, if $\beta p\ge 1$, the weighted integral may diverge even for
smooth functions on an interval component. Critical Hardy inequalities with
weight $|t-x_0|^{-\alpha p}$ require a boundary condition such as
$u(x_0)=0$, or an equivalent subtraction of the trace at $x_0$.
\end{remark}

\begin{proof}
Set $v:=u-u_{\T}$, so that $v_{\T}=0$ and
$[v]_{W^{\alpha,p}_\Delta(\T)}=[u]_{W^{\alpha,p}_\Delta(\T)}$. We decompose
the left-hand side over the finite family $\mathcal C$ of connected components.

\medskip\noindent
\textbf{Step~1: the interval component containing $x_0$.}
Let $I_0=[a_0,b_0]$ be the nondegenerate interval component containing $x_0$.
On $I_0$, the measure $\mu_\Delta$ coincides with the Lebesgue measure
\cite[Theorem~5.2]{CabadaVivero2006}. Since $\beta p<1$, the weight
$|t-x_0|^{-\beta p}$ is integrable on $I_0$. We write
\[
\int_{I_0}\frac{|v(t)|^p}{|t-x_0|^{\beta p}}\,dt
\le 2^{p-1}\left(
\int_{I_0}\frac{|v(t)-v_{I_0}|^p}{|t-x_0|^{\beta p}}\,dt
+ |v_{I_0}|^p\int_{I_0}|t-x_0|^{-\beta p}\,dt
\right).
\]
The constant term is controlled by Jensen's inequality and \cref{thm:poincare}:
\[
|v_{I_0}|^p\le \mu_\Delta(I_0)^{-1}\int_{I_0}|v|^p\,d\mu_\Delta
\le C\,[u]^p_{W^{\alpha,p}_\Delta(\T)}.
\]
It remains to estimate the weighted oscillation term.

If $\alpha p<1$, the one-dimensional fractional Sobolev embedding on $I_0$
gives $W^{\alpha,p}(I_0)\hookrightarrow L^{p_\alpha^*}(I_0)$, where
$p_\alpha^*=p/(1-\alpha p)$. Choose $s=1/\alpha$, so that
$1/p=1/s+1/p_\alpha^*$. Since $\beta<\alpha$, the weight
$|t-x_0|^{-\beta}$ belongs to $L^s(I_0)$. H\"older's inequality then yields
\[
\int_{I_0}\frac{|v(t)-v_{I_0}|^p}{|t-x_0|^{\beta p}}\,dt
\le C\|v-v_{I_0}\|_{L^{p_\alpha^*}(I_0)}^p
\le C\,[u]^p_{W^{\alpha,p}_\Delta(\T)}.
\]

If $\alpha p=1$, then $W^{\alpha,p}(I_0)$ embeds continuously into
$L^q(I_0)$ for every finite $q\ge p$. Since $\beta<1/p$, choose $q>p$ such
that $1/s:=1/p-1/q>\beta$. Then $|t-x_0|^{-\beta}\in L^s(I_0)$, and the same
H\"older argument gives the desired bound.

If $\alpha p>1$, the Morrey embedding gives
$W^{\alpha,p}(I_0)\hookrightarrow L^\infty(I_0)$. Since $\beta p<1$,
$|t-x_0|^{-\beta}\in L^p(I_0)$, and therefore
\[
\int_{I_0}\frac{|v(t)-v_{I_0}|^p}{|t-x_0|^{\beta p}}\,dt
\le C\|v-v_{I_0}\|_{L^\infty(I_0)}^p
\le C\,[u]^p_{W^{\alpha,p}_\Delta(\T)}.
\]
Thus
\[
\int_{I_0}\frac{|v(t)|^p}{|t-x_0|^{\beta p}}\,dt
\le C\,[u]^p_{W^{\alpha,p}_\Delta(\T)}.
\]

\medskip\noindent
\textbf{Step~2: the remaining components.}
If $C\in\mathcal C$ and $C\ne I_0$, then
$\operatorname{dist}(C,x_0)>0$. Hence $|t-x_0|^{-\beta p}$ is bounded on
$C$, and
\[
\int_C\frac{|v(t)|^p}{|t-x_0|^{\beta p}}\,d\mu_\Delta(t)
\le C_C\int_C |v(t)|^p\,d\mu_\Delta(t).
\]
The right-hand side is controlled by the same intra-component and
component-average argument used in the proof of \cref{thm:poincare}: the
oscillation $v-v_C$ is controlled inside interval components, it vanishes on
singletons, and the averages are controlled by the cross-component interaction
through \cref{lem:discrete-poincare,lem:cross-component-bounds}. Consequently,
\[
\sum_{C\in\mathcal C,\ C\ne I_0}
\int_C\frac{|v(t)|^p}{|t-x_0|^{\beta p}}\,d\mu_\Delta(t)
\le C\,[u]^p_{W^{\alpha,p}_\Delta(\T)}.
\]

Combining Step~1 and Step~2 proves \eqref{eq:hardy}.
\end{proof}

\begin{remark}[Critical Hardy exponent]\label{rem:critical-hardy}
When $\alpha p>1$, the Morrey embedding
$W^{\alpha,p}$ on the interval component containing $t_0$ embeds into a H\"older space, so pointwise values are well defined there. In this regime, a
stronger Hardy inequality with the \emph{critical} weight
$|t-t_0|^{-\alpha p}$ holds for functions satisfying $u(t_0)=0$; see
\cite{Dyda2004} and \cite[Theorem~1.1]{FrankSeiringer2008} for the
classical one-dimensional result. The extension to hybrid time scales under
\cref{ass:hybrid-class} follows by the same component decomposition used
above.
\end{remark}

We now combine the Hardy inequality with a general interpolation argument to
obtain a Caffarelli--Kohn--Nirenberg-type estimate. The following proposition
is stated abstractly: it holds on any finite measure space and reduces to
H\"older's inequality applied to a suitable product decomposition. Its
interest in the present setting lies in its combination with
\cref{thm:hardy,thm:embedding}.

\begin{proposition}[A Caffarelli--Kohn--Nirenberg-type interpolation on a
bounded time scale]
\label{prop:CKN-type}
Let $\T\subset\R$ be a bounded time scale satisfying
\cref{ass:hybrid-class}, let $x_0\in\T$, let $1<p<\infty$,
$1\le q<\infty$, and let $0\le \theta\le1$. Define $r$ by
\[
\frac1r=\frac{\theta}{p}+\frac{1-\theta}{q}.
\]
Let $\beta\ge0$, and set $b:=\theta\beta$. Assume that there exists
$C_H>0$ such that
\[
\bigl\||t-x_0|^{-\beta}(u-u_{\T})\bigr\|_{L^p_\Delta(\T)}
\le
C_H\,[u]_{W^{\alpha,p}_\Delta(\T)}
\qquad\text{for all }u\in W^{\alpha,p}_\Delta(\T).
\]
Then
\begin{equation}\label{eq:ckn-abstract}
\bigl\||t-x_0|^{-b}(u-u_{\T})\bigr\|_{L^r_\Delta(\T)}
\le
C_H^{\theta}\,[u]_{W^{\alpha,p}_\Delta(\T)}^{\theta}
\,\|u-u_{\T}\|_{L^q_\Delta(\T)}^{1-\theta}
\end{equation}
for all $u\in W^{\alpha,p}_\Delta(\T)$.
\end{proposition}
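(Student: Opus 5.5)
The plan is a direct pointwise factorization followed by a single application of H\"older's inequality, with exponents dictated by the definition of $r$. Set $v:=u-u_{\T}$ and write, for $\mu_\Delta$-a.e.\ $t\in\T$,
\[
|t-x_0|^{-b}|v(t)|
=\bigl(|t-x_0|^{-\beta}|v(t)|\bigr)^{\theta}\,|v(t)|^{1-\theta},
\]
which is an exact identity because $b=\theta\beta$. The endpoint cases are handled separately. If $\theta=0$, then $r=q$ and $b=0$, and \eqref{eq:ckn-abstract} is an equality (with the convention $C_H^0[u]^0=1$). If $\theta=1$, then $r=p$ and $b=\beta$, and \eqref{eq:ckn-abstract} is exactly the assumed Hardy bound. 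So we assume $0<\theta<1$.

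For $0<\theta<1$, raise the factorization to the power $r$ and integrate against $\mu_\Delta$. We then apply H\"older's inequality with conjugate exponents
\[
P:=\frac{p}{\theta r},\qquad Q:=\frac{q}{(1-\theta)r}.
\]
These satisfy $1/P+1/Q=r(\theta/p+(1-\theta)/q)=1$ by the definition of $r$, and both lie in $(1,\infty)$.

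The first factor $\bigl(|t-x_0|^{-\beta}|v|\bigr)^{\theta r}$ is placed in $L^{P}_\Delta$. Its $L^P$ norm is
\[
\bigl\||t-x_0|^{-\beta}v\bigr\|_{L^p_\Delta(\T)}^{\theta r}.
\]
The second factor $|v|^{(1-\theta)r}$ is placed in $L^Q_\Delta$, with norm
\[
\|v\|_{L^q_\Delta(\T)}^{(1-\theta)r}.
\]
Taking $r$-th roots gives
\[
\bigl\||t-x_0|^{-b}v\bigr\|_{L^r_\Delta}\le \bigl\||t-x_0|^{-\beta}v\bigr\|_{L^p_\Delta}^{\theta}\,\|v\|_{L^q_\Delta}^{1-\theta}.
\]
Inserting the hypothesis on $C_H$ into the first factor then yields \eqref{eq:ckn-abstract}.

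The main point requiring care is bookkeeping rather than depth. The exponent arithmetic must be checked carefully, in particular the cases $r<1$ (possible when $q<1$ is excluded, so $r\ge1$ here anyway) and infinite quantities. If $\|v\|_{L^q_\Delta}=\infty$, the right-hand side is $+\infty$ for $\theta<1$, and the inequality is trivial. The measurability of the weight on $\T$ and the fact that $\{x_0\}$ may carry positive $\Delta$-mass (when $x_0$ is a singleton) must also be addressed. At such a point the weight $|t-x_0|^{-\beta}$ is infinite, so we interpret both weighted norms with the same convention. Finiteness of the left side of the hypothesis then forces $v(x_0)=0$ whenever $\mu_\Delta(\{x_0\})>0$ and $\beta>0$, so the pointwise identity remains valid with $0\cdot\infty=0$. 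No earlier result beyond the assumed Hardy-type bound is needed; \cref{thm:hardy} and \cref{thm:embedding} only enter afterwards, to make the hypothesis and the $L^q$ factor concrete.
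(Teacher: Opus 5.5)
Your proposal is correct and follows the paper's proof essentially verbatim. Both use the same factorization with $b=\theta\beta$ and the same H\"older exponents $p/(\theta r)$ and $q/((1-\theta)r)$, and then insert the assumed Hardy bound. The paper leaves two points implicit that you handle: the endpoints $\theta\in\{0,1\}$, where those exponents degenerate, and a possible atom of $\mu_\Delta$ at $x_0$.
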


\begin{proof}
Set $v:=u-u_{\T}$. Then
\[
|t-x_0|^{-br}|v|^r
=
\Bigl(|t-x_0|^{-\beta p}|v|^p\Bigr)^{\frac{\theta r}{p}}
\Bigl(|v|^q\Bigr)^{\frac{(1-\theta)r}{q}}.
\]
Since
\[
\frac{\theta r}{p}+\frac{(1-\theta)r}{q}=1,
\]
H\"older's inequality with conjugate exponents $p/(\theta r)$ and
$q/((1-\theta)r)$ gives
\[
\int_{\T}|t-x_0|^{-br}|v|^r\,d\mu_\Delta
\le
\left(
\int_{\T}|t-x_0|^{-\beta p}|v|^p\,d\mu_\Delta
\right)^{\frac{\theta r}{p}}
\left(
\int_{\T}|v|^q\,d\mu_\Delta
\right)^{\frac{(1-\theta)r}{q}}.
\]
Taking the $r$-th root and applying the assumed Hardy estimate yields
\eqref{eq:ckn-abstract}.
\end{proof}
\begin{remark}\label{rem:ckn-admissible-q}
When applying Proposition~\ref{prop:CKN-type} in combination with
the Sobolev embedding of Theorem~\ref{thm:embedding} to control
$\|u-u_\T\|_{L^q_\Delta(\T)}$, the exponent $q$ must lie in the
admissible range guaranteed by that theorem:
\begin{itemize}
\item $p \le q \le p^*_\alpha := p/(1-\alpha p)$
      \quad if $\alpha p < 1$,
\item $p \le q < \infty$
      \quad if $\alpha p = 1$
      (Trudinger-type embedding),
\item $p \le q \le \infty$
      \quad if $\alpha p > 1$
      (Morrey embedding).
\end{itemize}
Outside this range, the factor $\|u-u_\T\|^{1-\theta}_{L^q_\Delta(\T)}$
on the right-hand side of \eqref{eq:ckn-abstract} need not be
controlled by the Gagliardo seminorm alone.
\end{remark}
Combining \cref{prop:CKN-type} with the Hardy inequality
(\cref{thm:hardy}) and the fractional Sobolev embedding
(\cref{thm:embedding}) yields a self-contained
Caffarelli--Kohn--Nirenberg-type inequality in which the right-hand side is
controlled entirely by the Gagliardo seminorm.

\begin{corollary}[Caffarelli--Kohn--Nirenberg-type inequality]
\label{cor:ckn}
Under \cref{ass:hybrid-class}, let $x_0$ belong to a nondegenerate interval
component of $\T$. Let $1<p<\infty$, $0<\alpha<1$, $0\le\beta<\alpha$,
$\beta p<1$, $0\le\theta\le 1$, and assume moreover that $\alpha p<1$ and
\[
1\le q\le p_\alpha^*:=\frac{p}{1-\alpha p}.
\]
Define $r$ and $b$ as in \cref{prop:CKN-type}. Then there exists $C>0$ such
that
\begin{equation}\label{eq:ckn}
\bigl\||t-x_0|^{-b}(u-u_{\T})\bigr\|_{L^r_\Delta(\T)}
\le
C\,[u]_{W^{\alpha,p}_\Delta(\T)}
\end{equation}
for every $u\in W^{\alpha,p}_\Delta(\T)$.
\end{corollary}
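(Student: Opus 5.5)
The plan is to obtain \eqref{eq:ckn} by feeding the two one-sided estimates already available into the abstract interpolation of \cref{prop:CKN-type}. Fix $u\in W^{\alpha,p}_\Delta(\T)$ and set $v:=u-u_\T$.

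\emph{Step 1 (Hardy hypothesis).} The standing assumptions $0\le\beta<\alpha$, $\beta p<1$ and $x_0$ in a nondegenerate interval component are exactly those of \cref{thm:hardy}. Hence the hypothesis of \cref{prop:CKN-type} holds with some constant $C_H=C_H(\T,\alpha,p,\beta,x_0)$:
\[
\bigl\||t-x_0|^{-\beta}v\bigr\|_{L^p_\Delta(\T)}\le C_H\,[u]_{W^{\alpha,p}_\Delta(\T)}.
\]

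\emph{Step 2 (interpolation).} Apply \cref{prop:CKN-type} with the given $\theta$ and $q$ (note $1\le q<\infty$, since $p^*_\alpha<\infty$ when $\alpha p<1$). This gives
\[
\bigl\||t-x_0|^{-b}v\bigr\|_{L^r_\Delta(\T)}\le C_H^\theta\,[u]^\theta_{W^{\alpha,p}_\Delta(\T)}\,\|v\|^{1-\theta}_{L^q_\Delta(\T)}.
\]

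\emph{Step 3 (closing the $L^q$ factor).} Because $\alpha p<1$ and $1\le q\le p^*_\alpha$, \cref{thm:embedding} (including its case $q<p$, handled there via finiteness of $\mu_\Delta(\T)$) yields $\|v\|_{L^q_\Delta(\T)}\le C_S[u]_{W^{\alpha,p}_\Delta(\T)}$. Substituting gives \eqref{eq:ckn} with $C=C_H^\theta C_S^{1-\theta}$.

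\emph{Endpoint cases.} At $\theta=0$ the weight disappears and the estimate is \cref{thm:embedding} directly. At $\theta=1$ we have $r=p$, $b=\beta$, and the estimate is \cref{thm:hardy}. For these endpoints I would either invoke those results directly or adopt the convention $0^0=1$ in Step 2.

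\emph{Main obstacle.} I expect the only delicate point to be checking that \cref{prop:CKN-type} is applicable for $q<p$ or $q>p$ without further restriction. Its Hölder step needs the exponents $p/(\theta r)$ and $q/((1-\theta)r)$ to be at least $1$. This holds since both $\theta r/p$ and $(1-\theta)r/q$ are nonnegative and sum to $1$, so no additional constraint on $q$ beyond Step 3 arises. No further analysis is needed; everything else is bookkeeping of constants.
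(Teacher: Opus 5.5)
Your proposal is correct and follows the paper's proof: you use \cref{thm:hardy} to verify the Hardy hypothesis of \cref{prop:CKN-type}, you use \cref{thm:embedding} (valid for $1\le q\le p^*_\alpha$ when $\alpha p<1$) to bound the $L^q_\Delta$ factor, and you obtain the same constant $C=C_H^\theta C_E^{1-\theta}$. Your additional checks are sound and do not change the argument: the endpoints $\theta\in\{0,1\}$, the requirement $q<\infty$, and the fact that the H\"older exponents are at least $1$.
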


\begin{proof}
By \cref{thm:hardy},
\[
\bigl\||t-x_0|^{-\beta}(u-u_\T)\bigr\|_{L^p_\Delta(\T)}
\le C_H [u]_{W^{\alpha,p}_\Delta(\T)}.
\]
Since $q$ lies in the fractional Sobolev range, \cref{thm:embedding} gives
\[
\|u-u_{\T}\|_{L^q_\Delta(\T)}
\le C_E\,[u]_{W^{\alpha,p}_\Delta(\T)}.
\]
Substituting these two estimates into \eqref{eq:ckn-abstract} yields
\[
\bigl\||t-x_0|^{-b}(u-u_\T)\bigr\|_{L^r_\Delta(\T)}
\le C_H^\theta C_E^{1-\theta}[u]_{W^{\alpha,p}_\Delta(\T)},
\]
which is \eqref{eq:ckn}.
\end{proof}

To illustrate Theorem~\ref{thm:hardy} and
Proposition~\ref{prop:CKN-type} on a concrete hybrid time scale,
consider the following three-component example, which combines
two interval components with one isolated point.

\begin{example}\label{ex:hardy-ckn}
Let $\T=[0,1]\cup\{2\}\cup[3,4]$, $\alpha=\frac{1}{2}$, $p=2$,
and $\beta=\frac{1}{4}$.
This time scale satisfies Assumption~\ref{ass:hybrid-class} with
$\delta_0=1$, $\mathcal{C}=\{[0,1],\,\{2\},\,[3,4]\}$.
The Lebesgue measure gives $\mu_\Delta([0,1])=\mu_\Delta([3,4])=1$.
For the isolated point $2$, since
$\sigma_\T(2):=\inf\bigl(\T\cap(2,\infty)\bigr)=3$,
we have
\[
\mu_\Delta(\{2\})=\sigma_\T(2)-2=3-2=1>0,
\]
so all three components contribute to the measure.

Since $\alpha p=1$, the Morrey--Trudinger embedding gives
$W^{1/2,2}_\Delta(\T)\hookrightarrow L^q_\Delta(\T)$
for every $q<\infty$ (see Remark~\ref{rem:morrey-trudinger}).
The conditions $\beta<\alpha$ and $\beta p=\frac{1}{2}<1$ place the example in the subcritical weighted regime of Theorem~\ref{thm:hardy}, so the Hardy inequality reads
\begin{align*}
&\int_0^1\frac{|u(t)-u_\T|^2}{t^{1/2}}\,dt
+\frac{|u(2)-u_\T|^2}{2^{1/2}}\cdot\mu_\Delta(\{2\})
+\int_3^4\frac{|u(t)-u_\T|^2}{t^{1/2}}\,dt\\
&\qquad\le\; C_H\,[u]^2_{W^{1/2,2}_\Delta(\T)},
\end{align*}
i.e.,
\[
\int_0^1\frac{|u(t)-u_\T|^2}{t^{1/2}}\,dt
+\frac{|u(2)-u_\T|^2}{\sqrt{2}}
+\int_3^4\frac{|u(t)-u_\T|^2}{t^{1/2}}\,dt
\le C_H\,[u]^2_{W^{1/2,2}_\Delta(\T)}.
\]
The three terms on the left-hand side reflect the three distinct
contributions: the singular weight $t^{-1/2}$ on $[0,1]$
(classical one-dimensional Hardy behaviour near $0$),
a single weighted point evaluation at the isolated node $2$
(controlled by the cross-component interaction energy),
and the regular weight $t^{-1/2}$ on $[3,4]$
(bounded away from $0$, hence dominated by
$\|u-u_\T\|^2_{L^2_\Delta([3,4])}$).

Applying Proposition~\ref{prop:CKN-type} with $\theta=\frac{1}{2}$,
$q=4$ (admissible in the critical embedding regime), and $r$ defined by
$1/r=\theta/p+(1-\theta)/q=1/4+1/8=3/8$, i.e.\ $r=8/3$,
the Caffarelli--Kohn--Nirenberg-type inequality gives
\[
\bigl\||t|^{-b}(u-u_\T)\bigr\|_{L^{8/3}_\Delta(\T)}
\le C_H^{1/2}\,C_E^{1/2}\,[u]_{W^{1/2,2}_\Delta(\T)},
\]
where $b=\theta\beta=1/8$ and $C_E$ is the embedding constant
from the Sobolev theorem.
\end{example}


\section{Conclusion}
\label{sec:conclusion}

We have developed a systematic Gagliardo-type formulation of fractional Sobolev spaces on arbitrary time scales, built directly from the Lebesgue $\Delta$-measure and the off-diagonal interaction domain. The construction provides a nonlocal notion of fractional regularity that is structurally distinct from the derivative-based approaches previously studied in the literature, and that is especially natural on hybrid time scales, where it combines continuous, discrete, and mixed interactions within a single energy.

The results are organized along three axes. On arbitrary time scales, we established the basic functional structure of the new spaces, including completeness, reflexivity, and the Hilbert case under an equivalent norm, together with a sharp nontriviality criterion on bounded time scales with finitely many connected components. On the interface with the derivative-based theory, we delimited precisely the regime in which the two viewpoints coincide: on continuous intervals, in the supercritical regime, the Gagliardo norm and the bilateral Riemann--Liouville norm are equivalent on the subspace of functions with vanishing boundary trace; on hybrid time scales, an explicit obstruction rules out any analogous equivalence, identifying the mixed continuous--discrete interactions as the structural cause. On bounded hybrid time scales with positively separated components, we proved a first set of geometric estimates: a Poincar\'e-type inequality with quantified dependence on the component gap, a fractional Sobolev embedding, and fractional Hardy and Caffarelli--Kohn--Nirenberg-type inequalities for subcritical weights.

Several natural questions are left for further work. The most immediate is the analogue of the supercritical Riemann--Liouville equivalence in the subcritical regime, which requires either a trace theory on the Gagliardo space or a density-based definition of the boundary-zero subspace. Compactness in the form of a Rellich--Kondrachov theorem, variable-order extensions, and a finer comparison with the weighted framework of Tan, Zhou, and Wang are equally natural directions. Beyond these, the variational use of the present framework for nonlocal problems on hybrid time scales, where the off-diagonal kernel encodes information that no differential operator can capture, appears as a promising application of the theory.


\section*{Declarations}

\subsection*{Competing interests} The authors declare that they have no known competing financial interests or personal relationships that could have appeared to influence the work reported in this paper.

\subsection*{Funding sources}
D.~F.~M.~Torres is supported by CIDMA under the Portuguese Foundation for Science and Technology (FCT), grant UID/04106/2025. The other authors did not receive any specific grant from funding agencies in the public, commercial, or not-for-profit sectors.

\subsection*{Declaration of Generative AI and AI-assisted technologies in the writing process}
During the preparation of this work, the authors used AI to improve language, readability, and presentation of the manuscript. The authors edited the content and take full responsibility for the final version of the manuscript.



\end{document}